\documentclass[12pt,twoside]{amsart}
\usepackage{amssymb}
\usepackage{amscd}
\usepackage{xypic}

\setlength{\topmargin}{-0.5cm}
\setlength{\textwidth}{15cm}
\setlength{\textheight}{22.6cm}
\setlength{\topmargin}{-0.25cm}
\setlength{\headheight}{1em}
\setlength{\headsep}{0.5cm}
\setlength{\oddsidemargin}{0.40cm}
\setlength{\evensidemargin}{0.40cm}

\title{Supplements to non-lc ideal sheaves}
\author{Osamu Fujino, Karl Schwede and Shunsuke Takagi}
\subjclass[2000]{Primary 14B05, 14E15; Secondary 14E30, 13A35.}
\address{Department of Mathematics\endgraf Faculty of Science\endgraf
Kyoto University\endgraf
Kyoto 606-8502\endgraf Japan}
\email{fujino@math.kyoto-u.ac.jp}
\address{Department of Mathematics\endgraf The Pennsylvania State University\endgraf University Park, PA 16802\endgraf USA}
\email{schwede@math.psu.edu}
\address{Department of Mathematics\endgraf
Kyushu University\endgraf
Fukuoka 819-0395\endgraf
Japan}
\email{stakagi@math.kyushu-u.ac.jp}

\DeclareFontFamily{OMS}{rsfs}{\skewchar\font'60}
\DeclareFontShape{OMS}{rsfs}{m}{n}{<-5>rsfs5 <5-7>rsfs7 <7->rsfs10 }{}
\DeclareSymbolFont{rsfs}{OMS}{rsfs}{m}{n}
\DeclareSymbolFontAlphabet{\scr}{rsfs}

\newcommand{\Nlc}[0]{{\operatorname{Nlc}}}
\newcommand{\Nklt}[0]{{\operatorname{Nklt}}}
\newcommand{\Spec}[0]{{\operatorname{Spec\;}}}
\newcommand{\Exc}[0]{{\operatorname{Exc}}}
\newcommand{\Supp}[0]{{\operatorname{Supp\;}}}
\newcommand{\Bs}[0]{{\operatorname{Bs}}}
\newcommand{\mJ}{\mathcal{J}}
\newcommand{\I}{\scr I}
\newcommand{\ba}{\mathfrak{a}}
\newcommand{\tld}{\widetilde }
\newcommand{\DuBois}[1]{{\underline \Omega {}^0_{#1}}}

\newcommand{\myD}{{\bf D}}
\newcommand{\myH}{{\mathcal H}}
\newcommand{\myR}{{R}}
\newcommand{\mydot}{{{\,\begin{picture}(1,1)(-1,-2)\circle*{2}\end{picture}\ }}}
\newcommand{\cO}{\mathcal O}
\newcommand{\qis}{\simeq_{\text{qis}}}
\newcommand{\tensor}{\otimes}
\newcommand{\sH}{\scr{H}}
\DeclareMathOperator{\red}{red}
\DeclareMathOperator{\coherent}{{coh}}
\DeclareMathOperator{\sHom}{{\sH}om}
\DeclareMathOperator{\Sing}{{Sing}}
\DeclareMathOperator{\Ann}{{Ann}}

\newcommand{\ib}{\mathfrak{b}}
\newcommand{\Q}{\mathbb{Q}}
\newcommand{\R}{\mathbb{R}}
\newcommand{\C}{\mathbb{C}}
\newcommand{\Z}{\mathbb{Z}}
\newcommand{\N}{\mathbb{N}}
\newcommand{\F}{\mathbb{F}}
\newcommand{\m}{\mathfrak{m}}
\DeclareMathOperator{\Div}{{div}}
\DeclareMathOperator{\Hom}{{Hom}}

\newtheorem{thm}{Theorem}[section]
\newtheorem{lem}[thm]{Lemma}
\newtheorem{cor}[thm]{Corollary}
\newtheorem{prop}[thm]{Proposition}
\newtheorem*{claim}{Claim}

\theoremstyle{definition}
\newtheorem{ex}[thm]{Example}
\newtheorem{defn}[thm]{Definition}
\newtheorem{propdef}[thm]{Proposition-Definition}
\newtheorem{rem}[thm]{Remark}
\newtheorem*{ack}{Acknowledgments}
\newtheorem*{notation}{Notation}
\newtheorem{que}[thm]{Question}
\newtheorem{conj}[thm]{Conjecture}

\newtheorem{say}[thm]{}
\RequirePackage[dvipsnames,usenames]{color}

\begin{document}
\bibliographystyle{amsalpha+}
\maketitle

\begin{abstract}
We consider various definitions of non-lc ideal sheaves -- generalizations of the multiplier ideal sheaf which
define the non-lc (non-log canonical) locus.
We introduce the maximal non-lc ideal sheaf and intermediate non-lc ideal sheaves
and consider the restriction theorem for these ideal sheaves.
We also begin the development of the theory of a characteristic $p > 0$ analog of maximal non-lc ideals, utilizing some recent work of Blickle.
\end{abstract}

\tableofcontents

\section{Motivation}\label{sec-mot}
In this short section, we explain our motivation for the study of
{\em{non-lc ideal sheaves}}.

\begin{say}[Motivation]
Let $X$ be a normal variety and $\Delta$ an effective $\mathbb Q$-divisor on $X$
such that $K_X+\Delta$ is $\mathbb Q$-Cartier.
In this situation, we want to define an ideal sheaf $I(X, \Delta)$ satisfying
the following properties.
\begin{itemize}
\item[(A)] The pair $(X, \Delta)$ is log canonical
if and only if $I(X, \Delta)=\cO_X$.
\item[(B)] (Kodaira type vanishing theorem). Assume that
$X$ is projective.
Let $D$ be a Cartier divisor on $X$ such that $D-(K_X+\Delta)$ is ample. Then
$$
H^i(X, I(X, \Delta)\otimes \cO_X(D))=0
$$
for every $i>0$.
\item[(C)] (Bertini type theorem).
Let $H$ be a general member of a free linear system $\Lambda$ on $X$. Then
$$
I(X, \Delta)=I(X, \Delta+H).
$$
\item[(D)] (Restriction theorem).
Assume that $\Delta=S+B$ such that
$S$ is a normal prime Weil divisor on $X$, $B$ is an effective
$\mathbb Q$-divisor, and that $S$ and $B$
have no common irreducible components.
Then
$$
I(X, S+B)|_S=I(S, B_S),
$$
where $(K_X+S+B)|_S=K_S+B_S$.
\end{itemize}
\end{say}
We have already known that the ({\em{minimal}}) {\em{non-lc ideal sheaf}}
$\mathcal J_{NLC}(X, \Delta)$ introduced in \cite{fujino-non} satisfies
all the above properties.
The {\em{intermediate non-lc ideal sheaves}}
$\mathcal J'_{l}(X, \Delta)$ for every negative integer $l$, which will be defined in Section
\ref{sec3.5} below, satisfy (A), (B), and (C). However,
in general, (D) does not always hold for $\mathcal J'_l (X, \Delta)$ with
$l=-1, -2, \cdots$.
The {\em{maximal non-lc ideal sheaf}}
$\mathcal J'(X, \Delta)$ also satisfies (A), (B), and (C),
and we do not know if (D) holds for $\mathcal J'(X, \Delta)$ or not.  However, we have some evidence that it is true.
In particular, we will give partial answers to this question in Section \ref{sec3} and in Section \ref{KarlSection}.  In section \ref{KarlSection}, we also mention a link between $\mathcal{J}'(X, \Delta)$ and ideals that appear naturally in the study of the Hodge-theory of singular varieties.  We conclude this paper by developing a characteristic $p > 0$ analog of $\mathcal{J}'(X, \Delta)$, relying heavily on some recent interesting work of Blickle, see \cite{Bl}.

Finally, it should be noted that, in various presentations, S\'andor Kov\'acs has recently been discussing how the ideal $\mathcal{J}'(X, \Delta)$ is a natural ideal to consider.  His work in this direction is independent of the authors although certainly inspired by connections with Du Bois singularities; see Section \ref{KarlSection}.

\section{Introduction}
The main purpose of
this paper is to consider variants of the {\em{non-lc ideal
sheaf}} $\mathcal J_{NLC}$ introduced in \cite{fujino-non}.
We also consider various {\em{non-klt ideal
sheaves}}. We will explain our motivation, observations, and some attempts in the
study of {\em{non-lc ideal sheaves}}.

Let $D$ be an effective $\mathbb R$-divisor on a smooth complex
variety $X$.
We put
$$
\mathcal J'(X, D):=\mathcal J(X, (1-\varepsilon)D)
$$
for $0<\varepsilon \ll 1$,
where the right hand side is the {\em{multiplier ideal sheaf}}
associated to $(1-\varepsilon)D$ and it is
independent of $\varepsilon$ for sufficiently small $0<\varepsilon \ll 1$.
By the definition of $\mathcal J'(X, D)$,
the pair $(X, D)$ is log canonical
if and only if $\mathcal J'(X, D)=\cO_X$.
We call $\mathcal J'(X, D)$ the {\em{maximal non-lc ideal sheaf}}
of $(X, D)$. We will discuss the definition and
the basic properties of $\mathcal J'$ in Section \ref{sec2}.
In general, $\mathcal J(X, D)\subsetneq \mathcal J(X, (1-\varepsilon)D)$
for $0<\varepsilon \ll 1$ and the relationship between $\mathcal J(X, D)$
and $\mathcal J'(X, D)$ is not clear.
So, we need new ideas and techniques to handle $\mathcal J'(X, D)$.
We believe that
the Kawamata--Viehweg--Nadel vanishing theorem is not powerful enough
for the study of $\mathcal J'(X, D)$.  However, the new cohomological
package of the first author, explained in Section \ref{sec4}, seems well suited for this task.

Let $X$ be a smooth variety and $S$ a smooth
irreducible divisor on $X$.
Let $B$ be an effective $\mathbb R$-divisor on $X$ such that
$S\not\subset \Supp B$.
Then we have the following equality
$$
\mathcal J'(S, B|_S)=\mathcal J'(X, S+B)|_S.
$$
See, for example, Theorem \ref{re-th}.
We call it the {\em{restriction theorem}}.
We will partially generalize it to the case of singular varieties in Section \ref{KarlSection}.
In \cite{fujino-non}, the first author introduced the notion
of ({\em{minimal}}) {\em{non-lc ideal sheaves}} $\mathcal J_{NLC}$
and proved the restriction theorem
$$
\mathcal J_{NLC}(S, B|_S)=\mathcal J_{NLC}(X, S+B)|_S.
$$

Both of $\mathcal J'(X, D)$ and $\mathcal J_{NLC}(X, D)$ define the non-lc
locus of the pair $(X, D)$. However,
in general, $\mathcal J'(X, D)$ does not always coincide with
$\mathcal J_{NLC}(X, D)$.
We note that
$$
\mathcal J(X, D)\subset \mathcal J_{NLC}(X, D)\subset
\mathcal J'(X, D)
$$
holds by the definitions of
$\mathcal J$, $\mathcal J_{NLC}$, and $\mathcal J'$.
Although $\mathcal J_{NLC}(X, D)$ seems to be
the most natural ideal that defines the non-lc locus
of $(X, D)$
from the point of view of the minimal model program (cf.~\cite{fundamental}),
$\mathcal J'(X, D)$ may be more suitable to
the theory of multiplier ideal
sheaves than $\mathcal J_{NLC}(X, D)$.

More generally, we consider a family of non-lc
ideal sheaves.  We define {\em{intermediate non-lc ideal
sheaves}} $\mathcal J'_l(X, D)$ for
every negative integer $l$. By the definition of
$\mathcal J'_l(X, D)$ (which is a sheaf that varies with each negative integer $l$),
$\mathcal J'_l(X, D)$ defines the non-lc locus of $(X, D)$ and satisfies many
of the same useful properties that the first author's original non-lc ideal $\mJ_{NLC}(X, D)$ enjoys.

Furthermore, there are natural inclusions (where again, the $l$ vary of the \emph{negative} integers)
\begin{align*}
\mathcal J_{NLC}(X, D)&\subset\cdots \subset \mathcal J'_{l-1}(X, D)\\ &\subset \mathcal J'_l(X, D)\subset
\mathcal J'_{l+1}(X, D)\subset \cdots \subset \mathcal J'(X, D)\subset \cO_X.
\end{align*}
Similarly, we also define a family of {\em{non-klt ideal sheaves}} $\mathcal J_l (X, D)$ for
every non-positive integer $l$.  These sheaves satisfy
\begin{align*}
\mathcal J(X, D)&\subset\cdots \subset \mathcal J_{l-1}(X, D)\\ &\subset \mathcal J_l(X, D)\subset
\mathcal J_{l+1}(X, D)\subset \cdots \subset \mathcal J_0(X, D)\subset \cO_X
\end{align*}
and put possibly different scheme structures on the non-klt locus of $(X, D)$.
We have natural inclusions
$$
\mathcal J_l(X, D)\subset \mathcal J'_l(X, D)
$$
for every negative integer $l$, as well as the inclusions
\begin{align*}
\mathcal J(X, D)\subset \mathcal J_{NLC}(X, D) \quad \text{and}\quad
\mathcal J_0(X, D)\subset \mathcal J'(X, D).
\end{align*}
Let $W$ be the union of all the lc centers of $(X, D)$ (see our slightly non-standard definition of
lc centers in Section \ref{sec33}) and $U=X\setminus W$.
Then
$$
\mathcal J_l(X, D)|_U=\mathcal J'_l(X, D)|_U
$$
for every negative integer $l$,
\begin{align*}
\mathcal J(X, D)|_U=\mathcal J_{NLC}(X, D)|_U \quad \text{and}
\quad \mathcal J_0(X, D)|_U=\mathcal J'(X, D)|_U.
\end{align*}
Because the multiplier ideal sheaf has emerged as such a fundamental tool in
higher dimensional algebraic geometry, it is natural to desire a non-lc
ideal sheaf which agrees with the multiplier ideal sheaf in as wide a setting as possible.
If we assume that this is a desired property, then $\mathcal J_{NLC}(X, D)$
is the right generalization of $\mathcal J(X, D)$.

On the other hand, with regards to condition (B) from Section \ref{sec-mot},
one way to interpret the term $I(X, \Delta)$ is as a correction term which mitigates
for the singularities of $\Delta$.  From this point of view, using the maximal non-lc ideal
sheaf $\mathcal J'(X, \Delta)$ gives a heuristically stronger statement as it says one has
to ``adjust'' to lesser extent.

The multiplier ideal $\mathcal{J}(X = \Spec R, \Delta)$ is also very closely related to the test ideal $\tau_b(R, \Delta)$, a notion that appears in the theory of commutative algebra in positive characteristic, see for example \cite{Ta}.  We conclude this paper with several sections which explore a positive characteristic analog of $\mathcal{J}'(X, \Delta)$ which we call the non-$F$-pure ideal and denote it by $\sigma(X, \Delta)$.  In order to define this ideal, we rely heavily on some recent work of Blickle, see \cite{Bl}.  In section \ref{secNonFpureVsNonLC} we then relate the characteristic zero notion $\mJ'(X, \Delta)$ and the characteristic $p > 0$ notion $\sigma(R, \Delta)$.
In the final section, we prove a restriction theorem for $\sigma(X, \Delta)$, including a proof that the formation of $\sigma(X, \Delta)$ commutes with the restriction to an arbitrary codimension normal $F$-pure center (which is a characteristic $p > 0$ analog of a log canonical center).

We summarize the contents of this paper.
This paper is divided into two parts.
Part I, consisting of Section \ref{sec33}--\ref{KarlSection},  is devoted to the study of variants of non-lc ideal sheaves $\mJ_{NLC}$ on complex algebraic varieties.
Part II, consisting of Section \ref{secNonFpureIdeals}--\ref{secResThmForNonFpure}, is devoted to the study of a positive characteristic analog of the maximal non-lc ideal sheaves.
These two parts are independent to each other, except for the definition of the maximal non-lc ideal sheaves.

In Section \ref{sec33}, we define {\em{lc centers}},
{\em{non-klt centers}},
and {\em{non-lc centers}}. It is very important to distinguish
these three notions.
In Section \ref{sec4}, we recall
Ambro's formulation of Koll\'ar's torsion-free and vanishing theorems for the reader's convenience.
In Section \ref{sec3-non}, we recall the notion of non-lc ideal
sheaves introduced in \cite{fujino-non}.
In Section \ref{sec1.5}, we discuss how to
define certain
non-lc ideal sheaves. We also discuss some properties
which should be satisfied by these
ideal sheaves. This section is an informal discussion.
In Section \ref{sec2}, we will define the {\em{maximal
non-lc ideal sheaf}} $\mathcal J'$ and
investigate basic properties of $\mathcal J'$.
In Section \ref{sec3.5}, we introduce the notion of {\em{intermediate
non-lc ideal sheaves}}.
Section \ref{sec7} is a supplement to the fundamental
theorems for the log minimal model
program in \cite{fundamental}.
In Section \ref{sec-klt},
we discuss various {\em{non-klt ideal sheaves}}.
In Section \ref{sec-diff}, we recall Shokurov's {\em{differents}} for
the restriction theorem discussed in Section \ref{sec3}.
Sections \ref{sec3} and \ref{KarlSection} are attempts to
prove the restriction theorem for $\mathcal J'$.  Also in section \ref{KarlSection}, we explain how $\mathcal{J}'(X, \Delta)$ appears in the study of the Hodge theory of singular varieties.
In Section \ref{secNonFpureIdeals}, we introduce a characteristic $p$ analog of the maximal non-lc-ideal $\mathcal{J}'(X, \Delta)$, called a non-F-pure ideal, and investigate its basic properties.
In Section \ref{secNonFpureVsNonLC}, we explore the relationship between non-F-pure ideals and maximal non-lc ideals, which is followed by Section \ref{secResThmForNonFpure} where we prove a restriction theorem for non-F-pure ideals.

We will work over the complex number field $\mathbb C$ throughout Part I.
But we note that by using the Lefschetz principle, we can
extend everything to the case where
the base field is an algebraically closed field of characteristic zero.
Also, we will use the following notation freely.

\begin{notation}
(i) For an $\mathbb R$-Weil divisor
$D=\sum _{j=1}^r d_j D_j$ such that
$D_j$ is a prime divisor for every $j$ and
$D_i\ne D_j$ for $i\ne j$, we define
the {\em{round-up}} $\ulcorner D\urcorner =\sum _{j=1}^{r}
\ulcorner d_j\urcorner D_j$
(resp.~the {\em{round-down}} $\llcorner D\lrcorner
=\sum _{j=1}^{r} \llcorner d_j \lrcorner D_j$),
where for every real number $x$,
$\ulcorner x\urcorner$ (resp.~$\llcorner x\lrcorner$) is
the integer defined by $x\leq \ulcorner x\urcorner <x+1$
(resp.~$x-1<\llcorner x\lrcorner \leq x$).
The {\em{fractional part}} $\{D\}$
of $D$ denotes $D-\llcorner D\lrcorner$.
We define
\begin{align*}&
D^{=k}=\sum _{d_j=k}d_jD_j=k\sum _{d_j=k}D_j,
\ \ D^{\leq k}=\sum_{d_j\leq k}d_j D_j, \\ &
D^{<k}=\sum_{d_j< k}d_j D_j, \ \ D^{\geq k}=\sum_{d_j\geq k}d_j D_j
\ \ \text{and}\ \ \
D^{>k}=\sum_{d_j>k}d_j D_j
\end{align*} for
every $k\in \mathbb R$.
We put
$$
{}^k\!D=\Supp D^{=k}.
$$
We note that ${}^0\!D=\Supp D^{=0}=0$ and ${}^1\!D=\Supp D^{=1}=D^{=1}$.
We call $D$ a {\em{boundary}}
$\mathbb R$-divisor if
$0\leq d_j\leq 1$
for every $j$.
We note that $\sim _{\mathbb Q}$ (resp.~$\sim _{\mathbb R}$)
denotes the $\mathbb Q$-linear (resp.~$\mathbb R$-linear) equivalence
of $\mathbb Q$-divisors (resp.~$\mathbb R$-divisors).

(ii) For a proper birational morphism $f:X\to Y$,
the {\em{exceptional locus}} $\Exc (f)\subset X$ is the locus where
$f$ is not an isomorphism.
\end{notation}

\begin{ack}
The first author was partially supported by the Grant-in-Aid for Young Scientists (A) $\sharp$20684001 from JSPS and by the Inamori Foundation.
The second author was partially supported by an NSF postdoctoral fellowship and by the NSF grant DMS-1064485. 
The third author was partially supported by the Grant-in-Aid for Young Scientists (B) $\sharp$20740019 from JSPS and by the Program for Improvement of Research Environment for
Young Researchers from SCF commissioned by MEXT of Japan.
They would like the referee for many useful suggestions. 
A part of this work was done during visits of all the authors to MSRI.
They are grateful to MSRI for its hospitality and support.
\end{ack}

\part{Variants of non-lc ideals $\mJ_{NLC}$}
This part is devoted to the study of variants of non-lc ideal sheaves $\mJ_{NLC}$ on complex algebraic varieties.
\section{Lc centers, non-klt centers, and non-lc centers}\label{sec33}

In this section, we quickly recall the notion of lc and klt paris and
define {\em{lc centers}}, {\em{non-klt centers}}, and
{\em{non-lc centers}}.

\begin{say}[Discrepancies, lc and klt pairs, etc.]Let $X$ be a normal variety and $B$
an effective $\mathbb R$-divisor
on $X$ such that $K_X+B$ is $\mathbb R$-Cartier.
Let $f:Y\to X$ be a resolution such that
$\Exc (f)\cup f^{-1}_*B$ has a simple normal crossing
support, where $f^{-1}_*B$ is the strict transform of $B$ on $Y$.
We write $$K_Y=f^*(K_X+B)+\sum _i a_i E_i$$ and
$a(E_i, X, B)=a_i$.
We say that
$(X, B)$ is
$$
\begin{cases}
{\text{log canonical (lc, for short)}} &{\text{if $a_i\geq -1$ for every $i$, and}}, \\
{\text{kawamata log terminal (klt, for short)}} &{\text{if $a_i>-1$ for every $i$}}.
\end{cases}
$$
Note that the {\em{discrepancy}} $a(E, X, B)\in \mathbb R$ can be
defined for every prime divisor $E$ {\em{over}} $X$.
By definition, there exists the largest Zariski open set
$U$ (resp.~$U'$) of $X$ such that $(X, B)$
is lc (resp.~klt) on $U$ (resp.~$U'$).
We put $\Nlc(X, B)=X\setminus U$ (resp.~$\Nklt(X, B)=X\setminus
U'$) and call it
the {\em{non-lc locus}} (resp.~{\em{non-klt locus}})
of the pair
$(X, B)$.
We sometimes simply denote $\Nlc (X, B)$ by $X_{NLC}$.
We will discuss various scheme structures on $\Nlc(X, B)$ (resp.~$\Nklt(X, B)$)
in Section \ref{sec7} (resp.~in Section \ref{sec-klt}).

Let $E$ be a prime divisor {\em{over}} $X$. The closure
of the image of $E$ on $X$ is denoted by
$c_X(E)$ and
called the {\em{center}} of $E$ on $X$.
\end{say}

\begin{say}[Lc centers, non-klt centers, and non-lc centers]
Let $X$ be a normal variety and $B$ an effective $\mathbb R$-divisor
on $X$ such that $K_X+B$ is $\mathbb R$-Cartier.
Let $E$ be a prime divisor over $X$.
In this paper, we use the following terminology.
The center $c_X(E)$ is
$$
\begin{cases}
\text{an lc center} & \text{if $a(E, X, B)=-1$ and $c_X(E)\not\subset \Nlc (X, B)$, }\\
\text{a non-klt center} &\text{if $a(E, X, B)\leq -1$, and}\\
\text{a non-lc center} &\text{if $a(E, X, B)<-1$.}
\end{cases}
$$
The above terminology is slightly different from the usual one.
We note that it is very important to distinguish
lc centers, non-klt centers, and non-lc centers in our theory.
In the traditional
theory of multiplier ideal sheaves, we can not distinguish among lc centers,
non-klt centers, and non-lc centers.
In our new framework, the notion of lc centers plays very important roles. It is because our arguments heavily depend
on the new cohomological package reviewed in Section
\ref{sec4}.
It is much more powerful than the Kawamata--Viehweg--Nadel
vanishing theorem.
We note that an lc center is a non-klt center.
\end{say}

The next lemma is almost obvious by the definition of lc centers.

\begin{lem}
The number of lc centers of $(X, B)$ is finite even if $(X, B)$ is not log canonical.
\end{lem}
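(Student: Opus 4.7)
My plan is to fix a single log resolution $f \colon Y \to X$ of $(X, B)$ with $\Exc(f) \cup \Supp f^{-1}_* B$ simple normal crossing, and to show that every lc center is the $f$-image of a stratum of a certain SNC divisor on $Y$ with only finitely many strata. Writing $K_Y + B_Y = f^*(K_X + B)$ with $B_Y = -\sum_i a_i E_i$, I set $T = \sum_{i\colon a_i = -1} E_i$. A basic observation I would record is $\Nlc(X, B) = f(\Supp B_Y^{>1})$: since $(Y, B_Y)$ is simple normal crossing, the failure of log canonicity at a point of $X$ is detected exactly by the presence above it of a component of $B_Y$ with coefficient $> 1$. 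Because $T$ has finitely many components, it has only finitely many strata (irreducible components of intersections of its components), so finitely many $f$-images, and the lemma will follow once I show every lc center is such an $f$-image.

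To prove this, let $W = c_X(E)$ be an lc center, so $a(E, X, B) = -1$ and $W \not\subset \Nlc(X, B)$. I would take a further log resolution $g \colon Y' \to Y$ extracting $E$ as a divisor on $Y'$. By the non-containment hypothesis together with the identity $\Nlc(X, B) = f(\Supp B_Y^{>1})$, the center $c_Y(E) = g(E)$ is not contained in $\Supp B_Y^{>1}$, so at the generic point of $c_Y(E)$ the pair $(Y, B_Y)$ is sub-log-canonical (all coefficients $\leq 1$) and simple normal crossing. Moreover $a(E, Y, B_Y) = a(E, X, B) = -1$ by the standard invariance of discrepancy under resolution.

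The main step, which I would invoke rather than reprove, is the classical fact for simple normal crossing sub-log-canonical pairs: any divisorial valuation over such a pair with discrepancy $-1$ has center equal to a stratum of the coefficient-one part of the boundary. Applied at the generic point of $c_Y(E)$, this identifies $c_Y(E)$ with a stratum of $T$, and hence $W = f(c_Y(E))$ as the $f$-image of one, completing the argument. The one delicate point I anticipate is making sure the argument really takes place inside the sub-log-canonical locus on $Y$ — that the generic point of $c_Y(E)$ genuinely avoids $\Supp B_Y^{>1}$ — which is exactly what the hypothesis $W \not\subset \Nlc(X, B)$ delivers, so once the bookkeeping near $\Nlc(X, B)$ is done cleanly the finiteness is automatic.
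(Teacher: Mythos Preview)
Your argument is correct. The paper gives no proof beyond remarking that the lemma is ``almost obvious by the definition of lc centers''; your approach---reducing to the finiteness of strata of $B_Y^{=1}$ on a fixed log resolution via the standard fact (cf.~\cite[Lemma 2.45]{km}) that discrepancy-$(-1)$ valuations over an SNC sub-lc pair are centered on such strata---is exactly the natural way to flesh this out.
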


We note the following elementary example.

\begin{ex}
Let $X=\mathbb C^2=\Spec \mathbb C[x, y]$ and
$C=(y^2=x^3)$.
We consider the pair $(X, C)$.
Then we can easily check that there is a prime divisor
$E$ over $X$ such that $a(E, X, C)=-1$ and
$c_X(E)$ is the origin $(0, 0)$ of $\mathbb C^2$ and
that $(X, C)$ is not lc at $(0, 0)$.
Therefore, the center $c_X(E)$ is a non-klt center but not an lc center of $(X, C)$.
\end{ex}

\section{New cohomological package}\label{sec4}

We quickly review Ambro's formulation
of torsion-free and vanishing theorems in a simplified form.
For more advanced topics and the proof, see \cite[Chapter 2]{fuji-book}.
The paper \cite{fuji0} may help the reader to understand the
proof of Theorem \ref{ap1}.
We think that it is not so easy to
grasp the importance of Theorem \ref{ap1}.
We recommend the reader to learn how to use
Theorem \ref{ap1} in \cite{fujino-non},
\cite{fuji-book},
\cite{fundamental}, and this paper.

\begin{say}[Global embedded simple normal crossing pairs]
Let $Y$ be a simple normal crossing divisor
on a smooth
variety $M$ and $D$ an $\mathbb R$-divisor
on $M$ such that
$\mathrm{Supp}(D+Y)$ is simple normal crossing and that
$D$ and $Y$ have no common irreducible components.
We put $B=D|_Y$ and consider the pair $(Y, B)$.
Let $\nu:Y^{\nu}\to Y$ be the normalization.
We put $K_{Y^\nu}+\Theta=\nu^*(K_Y+B)$.
A {\em{stratum}} of $(Y, B)$ is an irreducible component of $Y$ or
the image of some lc center of $(Y^\nu, \Theta^{=1})$.

When $Y$ is smooth and $B$ is an $\mathbb R$-divisor
on $Y$ such that
$\Supp B$ is simple normal crossing, we
put $M=Y\times \mathbb A^1$ and $D=B\times \mathbb A^1$.
Then $(Y, B)\simeq (Y\times \{0\}, B\times \{0\})$ satisfies
the above conditions.
\end{say}

\begin{thm}\label{ap1}
Let $(Y, B)$ be as above.
Assume that $B$ is a boundary $\mathbb R$-divisor.
Let
$f:Y\to X$ be a proper morphism and $L$ a Cartier
divisor on $Y$.

$(1)$ Assume that $L-(K_Y+B)$ is $f$-semi-ample.
Let $q$ be an arbitrary non-negative integer.
Then
every non-zero local section of $R^qf_*\cO_Y(L)$ contains
in its support the $f$-image of
some stratum of $(Y, B)$.

$(2)$ Let $\pi:X\to V$ be a proper morphism and
assume that $L-(K_Y+B) \sim _{\mathbb R}f^*H$ for
some $\pi$-ample $\mathbb R$-Cartier
$\mathbb R$-divisor $H$ on $X$.
Then, $R^qf_*\cO_Y(L)$ is $\pi_*$-acyclic, that is,
$R^p\pi_*R^qf_*\cO_Y(L)=0$ for every $p>0$ and $q\geq 0$.
\end{thm}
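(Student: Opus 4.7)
The overall plan is to follow the Kollár--Esnault--Viehweg--Ambro strategy: reduce from $\mathbb{R}$-divisors to $\mathbb{Q}$-divisors by perturbation, reduce from $\mathbb{Q}$-divisors to integral divisors via a cyclic cover, and then invoke Hodge theory (the $E_1$-degeneration of the log Hodge-to-de Rham spectral sequence) on a resolution of the cover. The strata of $(Y,B)$ will have to be carefully tracked through every step.

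First I would reduce to $\mathbb{Q}$-coefficients. Writing $B=B^{=1}+\{B\}$ with $\{B\}=B-B^{=1}$ a sum of components with coefficients in $[0,1)$, I would perturb the coefficients of $\{B\}$ slightly to rational numbers. Because the $f$-semi-ampleness (resp.\ the $\mathbb{R}$-linear equivalence to $f^{*}H$ with $H$ $\pi$-ample) of $L-(K_Y+B)$ is preserved under small perturbations, and because the stratum structure of $(Y,B)$ is determined only by the reduced divisor $Y\cup B^{=1}$ and not by the fractional part, the conclusions for $\mathbb{Q}$-boundaries imply those for $\mathbb{R}$-boundaries.

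With $B$ a $\mathbb{Q}$-divisor, I would take an integer $N>0$ so that $N(L-(K_Y+B))$ is Cartier and $NB$ is integral, and then construct a Kawamata-type cyclic cover $\rho:\widetilde{Y}\to Y$ of degree $N$ branched along a general member of a free linear system (after further shrinking via semi-ampleness in case (1) and pulling back a Bertini-type divisor from $X$ in case (2)). On the normalization $\widetilde{Y}^{\nu}$ and its log resolution $\mu:Y'\to \widetilde{Y}^{\nu}$, the pullback becomes an integral Cartier divisor, and one recovers $\cO_Y(L)$ as a direct summand of $(\rho\circ\mu)_{*}\cO_{Y'}(L')$ for an appropriate Cartier $L'$ on $Y'$. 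At this point the statement reduces to the classical Kollár torsion-free/vanishing theorem (for reduced simple normal crossing pairs), which is proved via the $E_1$-degeneration of the Hodge-to-de Rham spectral sequence for $\Omega^{\bullet}_{Y'}(\log(\mu^{-1}_{*}\Theta+\mathrm{Exc}))$ and the strictness of the Hodge filtration: strictness kills non-zero sections of $R^qf_{*}\cO_Y(L)$ whose support fails to contain the image of a stratum (giving part (1)), and yields $E_2$-vanishing for the Leray spectral sequence of $\pi\circ f$ (giving part (2)).

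The hard part, and what distinguishes Ambro's formulation from Kollár's original theorem, is the bookkeeping of strata through the three operations \emph{normalization}, \emph{cyclic cover}, and \emph{resolution}. In particular one must verify that every stratum of $(Y,B)$ is the image of a stratum of the log resolution upstairs, and, conversely, that no extraneous support arises from the cover. For part (2), an additional issue is that $Y$ is only a simple normal crossing divisor in $M$ rather than a smooth variety, so one has to pass to the normalization $\nu:Y^{\nu}\to Y$, invoke the definition $K_{Y^{\nu}}+\Theta=\nu^{*}(K_Y+B)$, and use the residue sequences to reduce to the smooth case while keeping track of the double-locus strata; this is the content for which I would lean on the machinery in \cite[Chapter 2]{fuji-book}.
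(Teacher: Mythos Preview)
The paper does not actually prove Theorem~\ref{ap1}; immediately after the statement (and Remark~\ref{rem24}) it writes ``For the proof of Theorem~\ref{ap1}, see \cite[Theorem~2.39]{fuji-book},'' and the introductory paragraph of Section~\ref{sec4} already warns that ``for more advanced topics and the proof, see \cite[Chapter~2]{fuji-book}.'' So there is no in-paper argument to compare against: this theorem is quoted as a black box.

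Your outline is a reasonable sketch of the Ambro--Fujino approach that the cited reference carries out (perturbation from $\mathbb{R}$- to $\mathbb{Q}$-coefficients, cyclic covers, $E_1$-degeneration, and careful stratum bookkeeping on a possibly reducible simple normal crossing $Y$). But since the present paper supplies no proof at all, the honest comparison is simply that the paper defers entirely to \cite{fuji-book} and \cite{fuji0}, whereas you have attempted to reconstruct the strategy.
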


\begin{rem}
It is obvious that
the statement of Theorem \ref{ap1} (1) is
equivalent to the following one.

$(1^{\prime})$ Assume that $L-(K_Y+B)$ is $f$-semi-ample.
Let $q$ be an arbitrary non-negative integer.
Then
every associated prime of $R^qf_*\cO_Y(L)$
is the generic point of the $f$-image of some stratum
of $(Y, B)$.
\end{rem}

For the proof of Theorem \ref{ap1}, see \cite[Theorem 2.39]{fuji-book}.

\begin{rem}\label{rem24} In Theorem \ref{ap1} (2),
it is sufficient to assume that $H$ is $\pi$-nef and
$\pi$-log big.
See \cite[Theorem 2.47]{fuji-book}.
We omit the technical details on nef and log big divisors
in order to keep this paper readable.
\end{rem}

\section{Non-lc ideal sheaves}\label{sec3-non}
Let us recall the definition of non-lc ideal sheaves
(cf.~\cite[Section 2]{fujino-non} and
\cite[Section 7]{fundamental}).

\begin{defn}[Non-lc ideal sheaf]\label{def61}
Let $X$ be a normal variety and $B$
an $\mathbb R$-divisor on $X$ such that
$K_X+B$ is $\mathbb R$-Cartier.
Let $f:Y\to X$ be a resolution with $K_Y+B_Y=f^*(K_X+B)$
such that
$\Supp B_Y$ is simple normal crossing.
Then we put
\begin{align*}
\mathcal J_{NLC}(X, B)
&=f_*\cO_Y(\ulcorner -(B_Y^{<1})\urcorner-
\llcorner B_Y^{>1}\lrcorner)\\& =f_*\cO_Y
(-\llcorner B_Y\lrcorner
+B^{=1}_Y)
\end{align*} and
call it the ({\em{minimal}}) {\em{non-lc ideal sheaf associated to $(X, B)$}}.
If $B$ is effective, then
$\mathcal J_{NLC}(X, B)\subset \cO_X$.
\end{defn}

The ideal sheaf $\mathcal J_{NLC}(X, B)$ is independent of the choice of resolution, and thus well-defined,
by the following
easy lemma.

\begin{lem}\label{lem62}
Let $g:Z\to Y$ be a proper birational morphism between
smooth varieties and $B_Y$ an
$\mathbb  R$-divisor on $Y$ such
that $\Supp B_Y$ is simple normal crossing.
Assume that $K_Z+B_Z=g^*(K_Y+B_Y)$ and
that $\Supp B_Z$ is simple normal crossing.
Then we have $$g_*\cO_Z(\ulcorner -(B^{<1}_Z)\urcorner
-\llcorner B^{>1}_Z\lrcorner)\simeq
\cO_Y(\ulcorner -(B^{<1}_Y)\urcorner
-\llcorner B^{>1}_Y\lrcorner). $$
\end{lem}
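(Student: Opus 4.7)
The plan is to show that $E := A_Z - g^*A_Y$ (writing $A_Z := \ulcorner -(B^{<1}_Z) \urcorner - \llcorner B^{>1}_Z \lrcorner = -\llcorner B_Z \lrcorner + B_Z^{=1}$ and similarly for $A_Y$) is an effective, $g$-exceptional $\mathbb{Z}$-divisor on $Z$. Once this is established, the lemma follows immediately from the projection formula: since $A_Y$ is Cartier on the smooth variety $Y$ and $g_*\cO_Z(E') = \cO_Y$ for any effective $g$-exceptional $E'$ (using $g_*\cO_Z = \cO_Y$ and the valuative criterion of regularity along strict transforms), we obtain
\[
g_*\cO_Z(A_Z) = g_*\bigl(\cO_Z(E) \otimes g^*\cO_Y(A_Y)\bigr) = g_*\cO_Z(E) \otimes \cO_Y(A_Y) = \cO_Y(A_Y).
\]

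That $E$ is $g$-exceptional is immediate: for the strict transform $\widetilde{D} \subset Z$ of any prime divisor $D \subset Y$, the coefficient of $\widetilde{D}$ in $B_Z$ equals the coefficient of $D$ in $B_Y$ (because $K_Z - g^*K_Y$ and $g^*B_Y - g^{-1}_*B_Y$ are both $g$-exceptional), so the coefficients of $A_Z$ and of $g^*A_Y$ on $\widetilde{D}$ coincide.

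The effectivity on each $g$-exceptional prime $E_i$ is the main content. Setting $a_i := \operatorname{coeff}_{E_i}(K_Z - g^*K_Y)$, $m_{i,j} := \operatorname{coeff}_{E_i}(g^*D_j)$ for each prime component $D_j \subset \Supp B_Y$ with coefficient $b_j$, $S := \sum_{b_j = 1} m_{i,j}$, $T' := \sum_j m_{i,j}\{b_j\}$, and $c_i := \sum_j m_{i,j}b_j - a_i$ (the coefficient of $E_i$ in $B_Z$), a direct simplification using $\llcorner c_i \lrcorner = \sum_j m_{i,j}\llcorner b_j \lrcorner - a_i + \llcorner T' \lrcorner$ yields
\[
\operatorname{coeff}_{E_i}(E) = (a_i - S) - \llcorner T' \lrcorner + \mathbf{1}_{c_i = 1}.
\]
Now $(Y, \{B_Y\} + B_Y^{=1})$ is simple normal crossing with boundary coefficients in $[0,1]$, hence log canonical, which translates to $a(E_i, Y, \{B_Y\} + B_Y^{=1}) = a_i - S - T' \geq -1$, equivalently $T' \leq a_i - S + 1$. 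When this inequality is strict, $\llcorner T' \lrcorner \leq a_i - S$ by integrality of $a_i - S$, and $\operatorname{coeff}_{E_i}(E) \geq 0$.

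The main obstacle is the equality case $T' = a_i - S + 1$, in which $E_i$ is an lc place of $(Y, \{B_Y\} + B_Y^{=1})$. Here I would appeal to the standard structural description of lc places of simple normal crossing boundary pairs: any such $E_i$ is in fact an lc place of the reduced subpair $(Y, B_Y^{=1})$ (so $a_i - S = -1$), with center equal to a stratum of $B_Y^{=1}$; by the snc assumption on $\Supp B_Y$, this center is not contained in any $D_{j'}$ with $b_{j'} \neq 1$, so $m_{i, j'} = v_{E_i}(D_{j'}) = 0$ for every such $D_{j'}$. Consequently $T' = 0$, $\sum_j m_{i,j}b_j = S$, and $c_i = S - a_i = 1$; hence $\mathbf{1}_{c_i = 1} = 1$ and $\operatorname{coeff}_{E_i}(E) = -1 - 0 + 1 = 0$, closing the argument.
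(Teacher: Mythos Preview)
Your proof is correct and follows essentially the same approach as the paper's. Both arguments show that $E=A_Z-g^*A_Y$ is an effective $g$-exceptional Cartier divisor by using log canonicity of $(Y,\{B_Y\}+B_Y^{=1})$, and both handle the equality case by the same key fact (cited in the paper as \cite[Lemma 2.45]{km}): an lc place $E_i$ of $(Y,\{B_Y\}+B_Y^{=1})$ has center a stratum of $B_Y^{=1}$, forcing $m_{i,j'}=0$ for $b_{j'}\neq 1$ and hence $c_i=a(E_i,Y,B_Y)=-1$. Your version simply makes the coefficient computation explicit, while the paper packages the same steps into the displayed identity for $K_Z$ and appeals directly to integrality of $E$ once the discrepancy $-1$ case has been identified.
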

\begin{proof}
By $K_Z+B_Z=g^*(K_Y+B_Y)$, we
obtain
\begin{align*}
K_Z=&g^*(K_Y+B^{=1}_Y+\{B_Y\})\\&+g^*
(\llcorner B^{<1}_Y\lrcorner+\llcorner B^{>1}_Y\lrcorner)
-(\llcorner B^{<1}_Z\lrcorner+\llcorner B^{>1}_Z\lrcorner)
-B^{=1}_Z-\{B_Z\}.
\end{align*}
If $a(\nu, Y, B^{=1}_Y+\{B_Y\})=-1$ for a prime divisor
$\nu$ over $Y$, then
we can check that $a(\nu, Y, B_Y)=-1$ by using
\cite[Lemma 2.45]{km}.
Since $g^*
(\llcorner B^{<1}_Y\lrcorner+\llcorner B^{>1}_Y\lrcorner)
-(\llcorner B^{<1}_Z\lrcorner+\llcorner B^{>1}_Z\lrcorner)$ is
Cartier, we can easily see that
$$g^*(\llcorner B^{<1}_Y\lrcorner+\llcorner B^{>1}_Y\lrcorner)
=\llcorner B^{<1}_Z\lrcorner+\llcorner B^{>1}_Z\lrcorner+E, $$
where $E$ is an effective $f$-exceptional Cartier divisor.
Thus, we obtain
$$g_*\cO_Z(\ulcorner -(B^{<1}_Z)\urcorner
-\llcorner B^{>1}_Z\lrcorner)\simeq
\cO_Y(\ulcorner -(B^{<1}_Y)\urcorner
-\llcorner B^{>1}_Y\lrcorner).$$
This completes the proof.
\end{proof}

The next lemma is obvious by definition:~Definition \ref{def61}.

\begin{lem}
Let $X$ be a normal variety and $B$ an effective $\mathbb R$-divisor on $X$ such that
$K_X+B$ is $\mathbb R$-Cartier. Then $(X, B)$ is lc if and only if
$\mathcal J_{NLC}(X, B)=\cO_X$.
\end{lem}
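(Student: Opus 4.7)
The plan is to read the lemma off directly from the defining formula for $\mathcal J_{NLC}$. Fix a log resolution $f:Y\to X$ with $K_Y+B_Y=f^*(K_X+B)$ and $\Supp B_Y$ simple normal crossing, and set $N:=-\llcorner B_Y\lrcorner+B_Y^{=1}$, so that $\mathcal J_{NLC}(X,B)=f_*\cO_Y(N)$; by Lemma \ref{lem62} this is independent of the chosen resolution.

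For the ``only if'' direction, I would assume $(X,B)$ is lc, so every coefficient of $B_Y$ is at most $1$, i.e.\ $B_Y^{>1}=0$. Then $N=\ulcorner -B_Y^{<1}\urcorner$, which is effective because rounding up a divisor whose coefficients are strictly less than $1$ yields nonnegative integers. Hence $\cO_Y\hookrightarrow \cO_Y(N)$, and applying $f_*$ gives $\cO_X\subset \mathcal J_{NLC}(X,B)$. Combined with the inclusion $\mathcal J_{NLC}(X,B)\subset \cO_X$ recorded in Definition \ref{def61} (valid since $B$ is effective), this yields equality.

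For the ``if'' direction, I would argue the contrapositive. Assume $(X,B)$ is not lc, so on the log resolution $Y$ there is a prime divisor $E$ with $a:=\mathrm{mult}_E B_Y>1$. The coefficient of $E$ in $N$ is then $-\llcorner a\lrcorner\le -1$. Pick any point $p\in f(E)$. Any local section $g$ of $\mathcal J_{NLC}(X,B)$ near $p$ pulls back to a section of $\cO_Y(N)$, so the inequality $\Div(f^*g)+N\ge 0$ forces $f^*g$ to vanish along $E$. Since $f^*1=1$ does not vanish along $E$, the unit $1$ is not in the stalk $\mathcal J_{NLC}(X,B)_p$, hence $\mathcal J_{NLC}(X,B)\neq \cO_X$.

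There is no serious obstacle here: once the defining formula is in hand, both directions reduce to bookkeeping on the coefficients of $N$. The only mild subtlety is the standard fact that $(X,B)$ is lc if and only if the coefficients of $B_Y$ on one (equivalently any) log resolution are at most $1$, which is what lets the contrapositive in the ``if'' direction produce a problematic divisor $E$ on the fixed $Y$ rather than only on some further blow-up.
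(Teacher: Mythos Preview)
Your proof is correct and follows the same approach the paper intends---the paper simply declares the lemma ``obvious by definition'' (Definition \ref{def61}) without writing out any details, and you have supplied those details accurately. One small wording slip: the reason $\ulcorner -B_Y^{<1}\urcorner$ is effective is that the coefficients of $-B_Y^{<1}$ exceed $-1$ (not that ``rounding up a divisor whose coefficients are strictly less than $1$ yields nonnegative integers,'' which is false as stated---e.g.\ $\ulcorner -2\urcorner=-2$), but the intended argument is clear and correct.
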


In the following sections, we consider variants of non-lc ideal sheaves.

\section{Observations towards non-lc ideal sheaves}\label{sec1.5}
First, we informally define $\mathcal J'$ as a limit
of multiplier ideal sheaves. We will call $\mathcal J'(X, B)$
the {\em{maximal non-lc ideal sheaf}} of the pair $(X, B)$.
For the details, see Section \ref{sec2}.

\begin{say}\label{maxnonlcdefsm}
Let $D$ be an effective $\mathbb R$-divisor on a smooth
variety $X$. Let $f:Y\to X$ be a resolution
such that $\Exc (f)\cup \Supp f^{-1}_*D$ is simple normal crossing.
Then the {\em{multiplier ideal sheaf}} $\mathcal J(X, D)\subset \cO_X$
associated to
$D$ was defined to
be
$$
\mathcal J(X, D)=f_*\cO_Y(K_{Y/X}-\llcorner f^*D\lrcorner),
$$
where $K_{Y/X}=K_Y-f^*K_X$.
In this situation,
we put
$$
\mathcal J'(X, D)=\mathcal J(X, (1-\varepsilon)D)
$$
for $0<\varepsilon \ll 1$.
We note that the right hand side is independent of
$\varepsilon$ for
$0<\varepsilon \ll 1$.
Therefore, we can write
$$
\mathcal J'(X, D)=\bigcap _{0<\varepsilon}\mathcal J(X, (1-\varepsilon)D)
$$
since
$$
\mathcal J(X, (1-\varepsilon)D)\subset \mathcal J(X, (1-\varepsilon')D)
$$
for $0<\varepsilon <\varepsilon '$.
We write $K_Y+\Delta_Y=f^*(K_X+D)$.
Then
$$
\mathcal J(X, D)=f_*\cO_Y(-\llcorner \Delta_Y\lrcorner),
$$
and
$$
\mathcal J(X, (1-\varepsilon)D)=f_*\cO_Y(-\llcorner
\Delta_Y\lrcorner+\sum _{k=-\infty}^{\infty}{}^k\!\Delta_Y)
$$
for $0<\varepsilon \ll 1$.
Since ${}^k\!\Delta_Y$ is $f$-exceptional for $k<0$,
we can write
$$
\mathcal J'(X, D)=f_*\cO_Y(-\llcorner
\Delta_Y\lrcorner +\sum _{k=1}^{\infty}{}^k\!\Delta_Y),
$$
This expression is very useful for generalizations.

By definition, we can easily check that
$$\mathcal J(X, (1+\varepsilon)D)=\mathcal J(X, D)$$ for
$0<\varepsilon \ll1$ and that
$\mathcal J(X, (1-\varepsilon) D)=\mathcal J(X, D)$ for
$0<\varepsilon \ll 1$ if and only if
$t=1$ is not a jumping number of $\mathcal J(X, tD)$.
In this paper, we are mainly interested in the case when $D$ is a reduced divisor.
In this case, $t=1$ is a jumping number of $\mathcal J(X, tD)$ and then
$\mathcal J'(X, D)\supsetneq \mathcal J(X, D)$.
\end{say}

Next, we observe various properties which should
be satisfied by {\em{non-lc ideal sheaves}}.

\begin{say}
Let $X$ be a smooth projective variety and
$B$ an effective integral Cartier divisor on $X$ such that
$\Supp B$ is simple normal crossing.
We can write $B=\sum _{k=1}^{\infty} kB_k$,
where $B_k:={}^k\!B=\Supp B^{=k}$.
We would like to define an ideal sheaf
$I(X, B)\subset \cO_X$ such that
$\Supp \cO_X/I(X, B)=\Nlc (X, B)$.
Let us put
$$I(X, B)=\cO_X(-\sum _{k=2}^{\infty}m_kB_k)$$ for some
$m_k\geq 1$ for every $k\geq 2$. Then
$I(X, B)$ defines the non-lc locus of the pair
$(X, B)$.
Let $L$ be a Cartier divisor on $X$ such that
$A:=L-(K_X+B)$ is ample.
For various geometric applications, we think that
it is natural to require
$$
H^i(X, \cO_X(L)\otimes I(X, B))=0
$$ for
all $i>0$.
Since
\begin{align*}
\cO_X(L)\otimes I(X, B)&=\cO_X(K_X+B+A-\sum _{k=2}^{\infty}
m_k B_k)\\
&=\cO_X(K_X+B_1+\sum _{k=2}^{\infty}(k-m_k)B_k+A),
\end{align*}
In view of the Norimatsu vanishing theorem (cf.~\cite[Lemma 4.3.5]{lazarsfeld}), if we hope for vanishing, we should make $m_k$ equal $k$ or $k-1$ for every $k\geq 2$.
If $m_k=k$ for every $k\geq 2$, then
$$
I(X, B)=\mathcal J_{NLC}(X, B).
$$
If $m_k=k-1$ for every $k\geq 2$,
then
$$
I(X, B)=\mathcal J'(X, B).
$$
Let $f:Y\to X$ be a blow-up along a stratum of $\Supp B$,
where a {\em{stratum}} of $\Supp B$ means an lc center of $(X, \Supp B)$.
We put $K_Y+B_Y=f^*(K_X+B)$.
Then it is natural to require $$I(Y, B_Y)=\cO_Y(-\sum _{k=2}^{\infty}
n_k {}^k\!B_Y)$$ such that $n_k=k$ or $k-1$ for every $k\geq 2$ and
$$f_*I(Y, B_Y)=I(X, B). $$
We think that the most {\em{natural}} choices for non-lc ideal sheaves are
$$
I(X, B)=\mathcal J_{NLC}(X, B)=\cO_X(-\sum _{k=2}^{\infty}kB_k)
$$
or
$$
I(X, B)=\mathcal J'(X, B)=\cO_X(-\sum _{k=2}^{\infty}(k-1)B_k).
$$
The ideal sheaf $\mathcal J_{NLC}(X, B)$ should be called
{\em{minimal}} non-lc ideal sheaf of $(X, B)$ and
$\mathcal J'(X, B)$ should be called {\em{maximal}}
non-lc ideal sheaf of $(X, B)$.

The smaller $B_1+\sum _{k=2}^{\infty}(k-m_k)B_k$ is,
the more easily we can apply our torsion-free theorem
(cf.~Theorem \ref{ap1} (1)) to $I(X, B)$.
It is one of the main reasons why the first author adopted
$\mathcal J_{NLC}(X, B)$ to define
$\Nlc (X, B)$.

Finally, we put
\begin{align*}
I(X, B)=\cO_X(-\sum _{k=2}^{1-l}kB_k-\sum _{k=2-l}^{\infty}(k-1)B_k)
=:\mathcal J'_l(X, B)
\end{align*}
for $l=0, -1, \cdots, -\infty$.
Then
\begin{align*}
\mathcal J_{NLC}(X, B)=\mathcal J'_{-\infty}(X, B)
\subset \mathcal J'_l(X, B)\subset
\mathcal J'_0(X, B)=\mathcal J'(X, B)
\end{align*}
and $\mathcal J'_l(X, B)$ satisfies all the
above desired properties for every $l$.
We will discuss $\mathcal J'_l(X, B)$ for every
negative integer $l$ in Section \ref{sec3.5}.
We do not know whether $\mathcal J'_l(X, B)$ with
$l\ne 0, -\infty$ is useful or not for geometric
applications.
\end{say}

\section{Maximal non-lc ideal sheaves}\label{sec2}
Let us define {\em{maximal non-lc ideal sheaves}}.

\begin{defn}\label{def-a}
Let $X$ be a normal variety and
$\Delta$ an $\mathbb R$-divisor on $X$ such that
$K_X+\Delta$ is $\mathbb R$-Cartier.
Let $f:Y\to X$ be a resolution with $K_Y+\Delta_Y=f^*(K_X+\Delta)$ such
that $\Supp \Delta_Y$ is simple normal crossing.
Then we put
$$
\mathcal J'(X, \Delta)=f_*\cO_Y(\ulcorner
K_Y-f^*(K_X+\Delta)+\varepsilon F\urcorner)
$$
for $0<\varepsilon \ll 1$, where
$F=\Supp \Delta^{\geq 1}_Y$.
\end{defn}
It is easy to see that the right hand side does not depend on $\varepsilon$
if $0<\varepsilon \ll 1$.
We note that
\begin{align*}
\ulcorner K_Y-f^*(K_X+\Delta)+\varepsilon F\urcorner&=\ulcorner -\Delta_Y+\varepsilon F\urcorner\\
&=-\llcorner \Delta_Y\lrcorner+\sum _{k=1}^{\infty}{}^k\!\Delta_Y
\end{align*}
for $0<\varepsilon \ll 1$.
Therefore, we can
write
\begin{align*}
\mathcal J'(X, \Delta)&=f_*\cO_Y(\ulcorner -\Delta_Y+\varepsilon F\urcorner)\\
&=f_*\cO_Y(-\llcorner \Delta_Y\lrcorner+\sum _{k=1}^{\infty}
{}^k\!\Delta_Y).
\end{align*}
By Lemma \ref{lem-a} below,
$\mathcal J'(X, \Delta)$ does not depend
on the resolution
$f:Y\to X$.

We note that $$\mathcal J(X, \Delta)=f_*\cO_Y(-\llcorner \Delta_Y\lrcorner)$$ is the {\em{multiplier ideal sheaf}} associated to
the pair $(X, \Delta)$ and that
$$\mathcal J_{NLC}(X, \Delta)=f_*\cO_Y(-\llcorner \Delta_Y\lrcorner
+\Delta^{=1}_Y)$$ is the ({\em{minimal}}) {\em{non-lc ideal sheaf}} associated to
the pair $(X, \Delta)$ (cf.~Definition \ref{def61}). It is obvious that
$$
\mathcal J(X, \Delta)\subset
\mathcal J_{NLC}(X, \Delta)\subset \mathcal J'(X, \Delta)
$$
by the above definitions, and it is also easy to check that the definition of $\mJ'(X, \Delta)$ agrees with that given in \ref{maxnonlcdefsm} when $X$ is smooth and $\Delta$ is effective.

From now on, we assume that
$\Delta$ is effective.
Then $\mathcal J'(X, \Delta)$ is an ideal
sheaf on $X$.
We are mainly interested in the case
when $\Delta$ is effective due to following
fact.

\begin{lem}Assume that
$\Delta$ is effective. Then
$(X, \Delta)$ is log canonical if and only if
$\mathcal J'(X, \Delta)=\cO_X$.
\end{lem}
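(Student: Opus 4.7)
The plan is to reduce $\mathcal J'(X,\Delta) = \cO_X$ to the elementary condition that the divisor $D := -\llcorner \Delta_Y\lrcorner + \sum_{k=1}^{\infty}{}^k\!\Delta_Y$ is everywhere effective on a log resolution $f\colon Y\to X$, and then to match that effectivity against the standard coefficient criterion for log canonicity.

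First, I will fix a log resolution $f\colon Y\to X$ with $K_Y+\Delta_Y = f^*(K_X+\Delta)$, so that $\mathcal J'(X,\Delta) = f_*\cO_Y(D)$. Since the paper has already noted that $\mathcal J'(X,\Delta)\subset\cO_X$ is a coherent ideal, the equality $\mathcal J'(X,\Delta)=\cO_X$ is equivalent to $1\in\mathcal J'(X,\Delta)_x$ for every $x\in X$. Unwinding the pushforward via $\mathcal J'(X,\Delta)_x=\varinjlim_{U\ni x}\Gamma(f^{-1}(U),\cO_Y(D))$, this becomes the statement that every $x\in X$ has an open neighbourhood $U$ with $D|_{f^{-1}(U)}\ge 0$. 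Globally, if $D\ge 0$ on all of $Y$ one may take $U=X$; conversely, if some prime $P\subset Y$ has negative coefficient in $D$, then for any $x\in f(P)$ and any open $U\ni x$ the intersection $P\cap f^{-1}(U)$ is non-empty, so $1\notin\Gamma(f^{-1}(U),\cO_Y(D))$ and $\mathcal J'(X,\Delta)_x\subsetneq\cO_{X,x}$.

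Next I will analyze the coefficient of $D$ on a prime divisor $P\subset Y$ of coefficient $d$ in $\Delta_Y$ by a short case split: this coefficient equals $-\llcorner d\lrcorner\ge 0$ if $d\le 0$, equals $0$ if $0<d\le 1$, equals $-d+1\le -1$ if $d$ is an integer $\ge 2$, and equals $-\llcorner d\lrcorner\le -1$ if $d$ is a non-integer greater than $1$. Hence $D$ is effective on all of $Y$ if and only if every coefficient of $\Delta_Y$ is $\le 1$, which is precisely the standard characterization of log canonicity on a log resolution. Combining with the first step, the forward direction is immediate, and for the reverse I will, assuming $(X,\Delta)$ is not lc, pick a component $P\subset Y$ of $\Delta_Y$ with $d_P>1$ and any $x\in f(P)$, and apply the stalk argument above to conclude $\mathcal J'(X,\Delta)_x\ne\cO_{X,x}$.

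The coefficient bookkeeping is mechanical; the one point requiring a little care is the stalk localization in the reverse direction, where a single (possibly exceptional) divisor $P$ with negative coefficient in $D$ must be shown to really obstruct $1$ from lying in $f_*\cO_Y(D)$ at each point of $f(P)$.
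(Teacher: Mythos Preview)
Your argument is correct. The paper does not supply a proof of this lemma at all; it is stated immediately after the definition of $\mathcal J'(X,\Delta)$ and treated as self-evident from that definition. Your write-up therefore fills in the routine verification that the paper omits: the coefficient analysis of $D=-\llcorner\Delta_Y\lrcorner+\sum_{k\ge 1}{}^k\!\Delta_Y$ is exactly right (the indicator term $+1$ appears precisely when the coefficient is a positive integer, so $D\ge 0$ iff every coefficient of $\Delta_Y$ is $\le 1$), and the stalk argument showing that a single component $P$ with negative coefficient in $D$ obstructs $1\in f_*\cO_Y(D)_x$ for $x\in f(P)$ is the correct way to pass from ``$D$ not effective'' to ``$\mathcal J'(X,\Delta)\ne\cO_X$''. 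There is nothing to compare against on the paper's side.
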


\begin{lem}\label{lem-a}
Let $X$ be a smooth variety and
$\Delta$ an $\mathbb R$-divisor
on $X$ such that
$\Supp \Delta$ is simple normal crossing.
Let $f:Y\to X$ be a proper birational
morphism such that
$\Exc (f)\cup \Supp f^{-1}_*\Delta$ is simple normal crossing.
We put $K_Y+\Delta_Y=f^*(K_X+\Delta)$.
Then
$$
f_*\cO_Y(\ulcorner -\Delta_Y+\varepsilon ' F'\urcorner)\simeq
\cO_X(\ulcorner -\Delta +\varepsilon F\urcorner)
$$
for $0<\varepsilon, \varepsilon'\ll 1$, where
$F=\Supp \Delta^{\geq 1}$ and $F'=\Supp \Delta^{\geq 1}_Y$.
\end{lem}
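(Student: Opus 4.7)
The plan is to adapt the proof of Lemma \ref{lem62}. Using the identifications $\ulcorner -\Delta+\varepsilon F\urcorner = -\llcorner\Delta\lrcorner+\sum_{k\ge 1}{}^k\!\Delta$ and $\ulcorner -\Delta_Y+\varepsilon' F'\urcorner = -\llcorner\Delta_Y\lrcorner+\sum_{k\ge 1}{}^k\!\Delta_Y$ (verified just before the lemma), the desired isomorphism becomes
\[
f_*\cO_Y(N_Y) \simeq \cO_X(N_X),\qquad N_X := -\llcorner\Delta\lrcorner+\sum_{k\ge 1}{}^k\!\Delta,\ \ N_Y := -\llcorner\Delta_Y\lrcorner+\sum_{k\ge 1}{}^k\!\Delta_Y.
\]
Since $X$ is smooth with $\Supp\Delta$ simple normal crossing, $N_X$ is an integral Cartier divisor on $X$. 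By the projection formula, the target isomorphism is equivalent to $f_*\cO_Y(E) = \cO_X$ for $E := N_Y - f^*N_X$, which would follow from the standard fact that $f_*\cO_Y(E')=\cO_X$ for any effective $f$-exceptional Cartier divisor $E'$ on $Y$ (using normality of $X$). I therefore aim to prove that $E$ is effective and $f$-exceptional.

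I would verify these properties by comparing coefficients of $E$ along each prime divisor $D\subset Y$, using $\Delta_Y = f^*\Delta - K_{Y/X}$. For $D$ the strict transform of a prime component $P$ of $\Delta$, one has $\operatorname{mult}_D(K_{Y/X})=0$ and $\operatorname{mult}_D(\Delta_Y)=\operatorname{mult}_P(\Delta)$; a direct computation then shows $\operatorname{mult}_D(N_Y)=\operatorname{mult}_D(f^*N_X)$, so $\operatorname{mult}_D(E)=0$. Hence $\Supp E\subset\Exc(f)$.

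For an $f$-exceptional prime divisor $D$, write $r := \operatorname{mult}_D(K_{Y/X})\ge 0$, $a := \operatorname{mult}_D(\Delta_Y)$, and $n_P := \operatorname{mult}_D(f^*P)$ for each prime component $P$ of $\Delta$ with coefficient $c_P$. Using the relation $a = \sum_P c_P n_P - r$ and the identity $\llcorner\sum_P c_P n_P\lrcorner = \sum_P\llcorner c_P\lrcorner n_P + \llcorner\sum_P\{c_P\}n_P\lrcorner$, a direct manipulation yields
\[
\operatorname{mult}_D(E) = r + [a\in\mathbb Z_{\ge 1}] - \llcorner \textstyle\sum_P \{c_P\}n_P\lrcorner - \sum_P [c_P\in\mathbb Z_{\ge 1}]\, n_P.
\]
The crucial input needed to bound this from below by $0$ is the log canonical inequality $\sum_P n_P \le r+1$. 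This follows from the SNC hypothesis on $\Exc(f)\cup\Supp f^{-1}_*\Delta$, which forces the pair $(X,\Supp\Delta)$ to be log canonical; consequently the log discrepancy $1 + r - \sum_P n_P$ of the exceptional divisor $D$ along $(X,\Supp\Delta)$ is non-negative. A short case analysis — distinguishing according to whether some $c_P$ with $n_P>0$ is non-integral and whether $a$ is a positive integer — reduces non-negativity of $\operatorname{mult}_D(E)$ to this bound.

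The main technical obstacle is this case analysis in the exceptional locus. The subtlest case is when every $c_P$ with $n_P>0$ is an integer (so $\llcorner \sum_P \{c_P\}n_P\lrcorner=0$) and $a\notin\mathbb Z_{\ge 1}$: one must rule out $\sum_P[c_P\in\mathbb Z_{\ge 1}]n_P = r+1$. But this equality would force all such $c_P$ to be positive integers filling out the log canonical bound, in which case $c_P\ge 1$ yields $\sum_P c_P n_P\ge r+1$ and hence $a\ge 1$ is a positive integer, contradicting the case assumption. Once effectiveness of $E$ is in hand, the desired isomorphism follows as explained in the first paragraph.
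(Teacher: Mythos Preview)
Your argument is correct and reaches the same destination as the paper: both show that
$E=\ulcorner -\Delta_Y+\varepsilon'F'\urcorner-f^*\ulcorner -\Delta+\varepsilon F\urcorner$
is an effective $f$-exceptional Cartier divisor, whence the isomorphism follows by the projection formula. The routes, however, are different. The paper rewrites
$K_Y=f^*(K_X+\{\Delta-\varepsilon F\}+\varepsilon F)+f^*\llcorner\Delta-\varepsilon F\lrcorner-\Delta_Y$,
observes that $\{\Delta-\varepsilon F\}+\varepsilon F$ is a boundary with simple normal crossing support, and then uses log-canonicity of this auxiliary pair together with \cite[Lemma~2.45]{km} to identify the divisors where the discrepancy hits $-1$ as lying in $F'$. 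This packages the whole effectivity check into a single discrepancy comparison. Your approach instead unwinds everything into an explicit multiplicity calculation, reducing to the bare inequality $\sum_P n_P\le r+1$ coming from log-canonicity of $(X,\Supp\Delta)$, followed by the integer case analysis you describe. Your method is more elementary (no appeal to \cite[Lemma~2.45]{km}) and makes the combinatorics completely transparent, at the cost of a somewhat fiddly case split; the paper's argument is shorter and scales more cleanly to the variant Lemma~\ref{lem-b}, where the same template is reused with $\Delta^{=1}+F$ in place of $F$.

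One small comment: the inequality $\sum_P n_P\le r+1$ follows already from the hypothesis that $X$ is smooth with $\Supp\Delta$ simple normal crossing (so $(X,\Supp\Delta)$ is lc); you do not need the SNC hypothesis on $\Exc(f)\cup\Supp f^{-1}_*\Delta$ for this particular step, only to ensure $Y$ is smooth and the divisors under discussion are well-behaved.
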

\begin{proof}
Since $K_Y+\Delta_Y=f^*(K_X+\Delta)$, we can
write
$$
K_Y=f^*(K_X+\{\Delta-\varepsilon F\}+\varepsilon F)
+f^*\llcorner \Delta-\varepsilon F\lrcorner-\Delta_Y.
$$
We note that $\{\Delta -\varepsilon F\}+\varepsilon F$ is a boundary
$\mathbb R$-divisor whose support is simple normal crossing for
$0<\varepsilon \ll 1$ and that
$(X, \{\Delta -\varepsilon F\})$ is klt.
Thus, $a(\nu, X, \{\Delta-\varepsilon F\}+\varepsilon F)\geq -1$ for
every $\nu$ (assuming again $0 < \varepsilon \ll 1$).
We can easily check that $a(\nu, X, F)=-1$
if $a(\nu, X, \{\Delta-\varepsilon F\}+\varepsilon F)=-1$ and
that $a(\nu, X, F)=-1$ induces $a(\nu, X, \Delta)\leq -1$
(cf.~\cite[Lemma 2.45]{km}).
Therefore, the round-up of
$f^*\llcorner \Delta-\varepsilon F\lrcorner-\Delta_Y+\varepsilon 'F'$ is
effective.
So, we can write
$$
f^*\llcorner \Delta-\varepsilon F\lrcorner-\llcorner
\Delta_Y-\varepsilon 'F'\lrcorner=E,
$$
where $E$ is an effective Cartier divisor on $Y$.
We can easily check that
$E$ is $f$-exceptional for $0<\varepsilon, \varepsilon'\ll1$.
Thus, we obtain
$$
f_*\cO_Y(\ulcorner -\Delta_Y+\varepsilon ' F'\urcorner)\simeq
\cO_X(\ulcorner -\Delta +\varepsilon F\urcorner)
$$
since $\ulcorner -\Delta_Y+\varepsilon 'F'\urcorner=f^*(\ulcorner
-\Delta+\varepsilon F\urcorner)+E$.
\end{proof}

We can also define $\mathcal J'$ for ideal sheaves.

\begin{defn}
Let $X$ be a normal variety and $\Delta$ an $\mathbb R$-divisor
on $X$ such that
$K_X+\Delta$ is $\mathbb R$-Cartier.
Let $\mathfrak a\subset \cO_X$ be a non-zero
ideal sheaf on $X$ and $c$ a real number.
Let $f:Y\to X$ be a resolution
such that $K_Y+\Delta_Y=f^*(K_X+\Delta)$ and
$f^{-1}\mathfrak a=\cO_Y(-E)$, where
$\Supp \Delta_Y \cup \Supp E$ has a simple normal crossing support.
We put
$$
\mathcal J'((X, \Delta); \mathfrak a^c)=f_*\cO_Y(-\llcorner \Delta_Y
+cE
\lrcorner+\sum _{k=1}^{\infty}{}^k\!(\Delta_Y+cE)).
$$
We sometime write
$$
\mathcal J'((X, \Delta); c\cdot \mathfrak a)=\mathcal J'((X, \Delta); \mathfrak a^{c}).
$$
\end{defn}

Of course, $\mathcal J'((X, \Delta); \mathfrak a^c)$ dose not
depend on $f:Y\to X$ by Lemma \ref{lem-a}.
We recall that
$$
\mathcal J((X, \Delta); \mathfrak a^c)=f_*\cO_Y
(-\llcorner \Delta_Y+cE\lrcorner).
$$

\begin{lem}\label{lem-ta}Let $X$ be a normal variety and
$\Delta$ an effective $\mathbb R$-divisor on $X$ such
that $K_X+\Delta$ is $\mathbb R$-Cartier.
Then we have
$$
\mathcal J'(X, \Delta)=\mathcal J((X, \Delta); \mathcal
J(X, \Delta)^{-\varepsilon})
$$
for
$0<\varepsilon \ll 1$.
In particular,
$$
\mathcal J'(X, \Delta)=\bigcap _{0<\varepsilon}\mathcal J((X, \Delta);
\mathcal J(X, \Delta)^{-\varepsilon}).
$$
\end{lem}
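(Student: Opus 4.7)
The plan is to prove the identity on a common log resolution $f\colon Y \to X$ of the pair $(X, \Delta)$ and of the ideal $\mathcal J(X, \Delta)$: that is, $K_Y + \Delta_Y = f^*(K_X + \Delta)$ with $\Supp \Delta_Y$ simple normal crossing, and $\mathcal J(X, \Delta) \cdot \cO_Y = \cO_Y(-E)$ for some effective divisor $E$ with $\Supp(E + \Delta_Y)$ simple normal crossing. By Definition \ref{def-a} and the definition of $\mathcal J((X,\Delta); \mathfrak a^c)$,
\[
\mathcal J'(X, \Delta) = f_*\cO_Y(-\llcorner \Delta_Y - \varepsilon R \lrcorner), \qquad \mathcal J((X,\Delta); \mathcal J(X,\Delta)^{-\varepsilon}) = f_*\cO_Y(-\llcorner \Delta_Y - \varepsilon E \lrcorner),
\]
where $R := \Supp \Delta_Y^{\geq 1}$. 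My first step is to show $E = \llcorner \Delta_Y \lrcorner^+ + G$ with $G$ effective and $f$-exceptional: since $\mathcal J(X, \Delta) = f_*\cO_Y(-\llcorner \Delta_Y \lrcorner)$, every local section pulls back to a function vanishing to order at least $\llcorner \Delta_Y \lrcorner^+$ along every prime divisor of $Y$, giving $E \geq \llcorner \Delta_Y \lrcorner^+$; and at the generic point of any prime divisor $D \subset X$ the pair $(X, \Delta)$ is locally $(X_0, \alpha D_0)$ for $X_0$ smooth, so $\mathcal J(X, \Delta)$ has generic vanishing order exactly $\llcorner \alpha \lrcorner$ along $D$, which matches the coefficient of $\llcorner \Delta_Y \lrcorner^+$ on the strict transform.

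A case-by-case comparison of coefficients using $E = \llcorner \Delta_Y \lrcorner^+ + G$ then shows that, for $0 < \varepsilon \ll 1$, the divisors $-\llcorner \Delta_Y - \varepsilon E \lrcorner$ and $-\llcorner \Delta_Y - \varepsilon R \lrcorner$ agree at every prime divisor $F$ of $Y$, except at those exceptional $F$ with $\mathrm{mult}_F \Delta_Y \in \Z_{\leq 0}$ and $\mathrm{mult}_F G > 0$, where the former exceeds the latter by exactly $1$. Consequently
\[
-\llcorner \Delta_Y - \varepsilon E \lrcorner = -\llcorner \Delta_Y - \varepsilon R \lrcorner + H
\]
for some effective $f$-exceptional divisor $H$, so the first identity reduces to the equality $f_*\cO_Y(L + H) = f_*\cO_Y(L)$, where $L := -\llcorner \Delta_Y - \varepsilon R \lrcorner$.

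This is the main obstacle, and I would resolve it as follows. The inclusion $f_*\cO_Y(L+H) \supseteq f_*\cO_Y(L)$ is tautological. For the reverse, a direct check shows that every non-exceptional component of $L$ has coefficient $\leq 0$; thus any local section $t$ of $f_*\cO_Y(L+H)$, viewed as a rational function on $X$, satisfies $\mathrm{ord}_D(t) \geq 0$ along every prime $D \subset X$, and hence $t$ is regular by the normality of $X$. Now for each $F \in \Supp H$, one has $\mathrm{mult}_F R = 0$ and $\mathrm{mult}_F \Delta_Y \leq 0$, so $-\mathrm{mult}_F L = \mathrm{mult}_F \Delta_Y \leq 0$; the regularity of $f^*t$ forces $\mathrm{ord}_F(f^*t) \geq 0 \geq -\mathrm{mult}_F L$, which is exactly the condition needed for $t \in f_*\cO_Y(L)$. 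The ``in particular'' statement then follows at once, since $\mathcal J((X,\Delta); \mathcal J(X,\Delta)^{-\varepsilon})$ is non-decreasing in $\varepsilon > 0$ and equal to $\mathcal J'(X, \Delta)$ for all sufficiently small $\varepsilon$.
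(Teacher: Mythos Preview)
Your proof is correct and follows essentially the same route as the paper's: compute both sides on a common log resolution and compare the divisors $-\llcorner \Delta_Y - \varepsilon E\lrcorner$ and $-\llcorner \Delta_Y - \varepsilon R\lrcorner$, showing their difference is effective, $f$-exceptional, and supported where it does not affect the pushforward. The paper's proof is terser: it records only the two facts $\Supp \Delta_Y^{\geq 1}\subseteq \Supp E$ and that $\Delta_Y^{<0}$ is $f$-exceptional, and then says ``we can easily check'' the equality of pushforwards; your case-by-case analysis and the argument that regular sections automatically satisfy the order conditions along $\Supp H$ are exactly the details behind that phrase. Your intermediate decomposition $E=\llcorner \Delta_Y\lrcorner^{+}+G$ is a little more than is strictly needed (for small $\varepsilon$ only the inclusion of supports matters), but it is correct and does no harm.
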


Although we will not use Lemma \ref{lem-ta} in the proof of
the restriction theorem (cf.~Theorem \ref{re-th}),
this lemma may help us understand $\mathcal J'$.

\begin{proof}
Let $f:Y\to X$ be a resolution with $f^{-1}\mathcal J(X, \Delta)=
\cO_Y(-E)$ such that
$\Exc (f)$, $\Supp f^{-1}_*\Delta$,
$\Supp E$,
and $\Exc (f)\cup \Supp f^{-1}_*\Delta\cup \Supp E$ are simple normal crossing
divisors.
We put $K_Y+\Delta_Y=f^*(K_X+\Delta)$. Then
$$
\mathcal J((X, \Delta); \mathcal J(X, \Delta)^{-\varepsilon})
=f_*\cO_Y(-\llcorner \Delta_Y-\varepsilon E\lrcorner)
$$
by definition.
Since $\Supp \Delta^{\geq 1}_Y\subset \Supp E$ and $\Delta^{<0}_Y$ is
$f$-exceptional, we can
easily check that
$$
f_*\cO_Y(-\llcorner \Delta_Y-\varepsilon E\lrcorner)
=f_*\cO_Y(\ulcorner -\Delta_Y+\varepsilon E\urcorner)=
\mathcal J'(X, \Delta).
$$
This completes the proof.
\end{proof}
By this lemma, $\mathcal J'(X, \Delta)$ itself is a
multiplier ideal sheaf.
The vanishing theorem holds for $\mathcal J'$.

\begin{thm}[Vanishing theorem]\label{vani2}
Let $X$ be a normal variety and $\Delta$
an $\mathbb R$-divisor on $X$
such that $K_X+\Delta$ is $\mathbb R$-Cartier.
Let $\pi:X\to S$ be a projective
morphism onto an algebraic variety $S$ and
$L$ a Cartier divisor on $X$.
Assume that $L-(K_X+\Delta)$ is $\pi$-ample.
Then we have
$$
R^i\pi_*(\mathcal J'(X, \Delta)\otimes \cO_X(L))=0
$$
for all $i>0$.
\end{thm}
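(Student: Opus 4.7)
The plan is to realize $\mJ'(X,\Delta)\otimes\cO_X(L)$ as the pushforward of a line bundle on a log resolution whose ``adjoint twist'' is the pullback of a $\pi$-ample divisor, and then to apply the vanishing half of Theorem \ref{ap1}. First I would take a log resolution $f\colon Y\to X$ with $K_Y+\Delta_Y=f^*(K_X+\Delta)$ and $\Supp \Delta_Y$ simple normal crossing, and set $F=\Supp\Delta_Y^{\geq 1}$. By Definition \ref{def-a} and the projection formula,
$$
\mJ'(X,\Delta)\otimes\cO_X(L)=f_*\cO_Y(N),\qquad N:=f^*L+\ulcorner -\Delta_Y+\varepsilon F\urcorner,
$$
for $0<\varepsilon\ll 1$, and $N$ is a Cartier divisor on $Y$.

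Next I would exhibit a boundary $\R$-divisor $B$ on $Y$ with simple normal crossing support such that $N-(K_Y+B)=f^*(L-(K_X+\Delta))$. The natural candidate is
$$
B:=\{\Delta_Y-\varepsilon F\}+\varepsilon F,
$$
and a direct computation using $\ulcorner -\Delta_Y+\varepsilon F\urcorner = -(\Delta_Y-\varepsilon F)+\{\Delta_Y-\varepsilon F\}$ gives the claimed identity. A coefficient-by-coefficient check shows that for $0<\varepsilon\ll 1$ the divisor $B$ has coefficients in $[0,1]$ and support inside $\Supp\Delta_Y$, so $B$ is a boundary and $(Y,B)$ is a simple normal crossing pair; the role of the $\varepsilon F$ perturbation is precisely to shift components of $\Delta_Y$ with non-integral coefficient $>1$ back into $[0,1)$ while still assigning coefficient $1$ to components of $\Delta_Y^{=1}$. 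Since $H:=L-(K_X+\Delta)$ is $\pi$-ample on $X$, we have $N-(K_Y+B)\sim_{\R}f^*H$ with $H$ $\pi$-ample, which is exactly the hypothesis of Theorem \ref{ap1}(2).

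Finally, regarding $(Y,B)$ as a global embedded simple normal crossing pair via the standard trick $Y\hookrightarrow Y\times\mathbb A^1$ described in Section \ref{sec4}, Theorem \ref{ap1}(2) applied to the factorization $Y\xrightarrow{\,f\,}X\xrightarrow{\,\pi\,}S$ yields
$$
R^p\pi_*\,R^qf_*\cO_Y(N)=0\quad\text{for all } p>0,\ q\geq 0.
$$
Specializing to $q=0$ gives $R^p\pi_*\bigl(\mJ'(X,\Delta)\otimes\cO_X(L)\bigr)=0$ for every $p>0$, as required. The only delicate point is the bookkeeping that certifies $B$ is a boundary divisor with SNC support, and in particular the verification that the perturbation $\varepsilon F$ is needed and suffices to absorb the ``extra'' $\sum_{k\geq 1}{}^k\!\Delta_Y$ in the definition of $\mJ'$; I do not expect any deeper obstacle, in line with the paper's remark that the new cohomological package (rather than Kawamata--Viehweg--Nadel) is the natural tool for handling $\mJ'$.
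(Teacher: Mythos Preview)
Your proposal is correct and is essentially the paper's own argument: both produce the same Cartier divisor on $Y$ (your $\ulcorner -\Delta_Y+\varepsilon F\urcorner$ equals the paper's $A-N+G$ with $A=\ulcorner -(\Delta_Y^{<1})\urcorner$, $N=\llcorner \Delta_Y^{>1}\lrcorner$, $G=\sum_{k\geq 2}{}^k\!\Delta_Y$) and the same boundary (your $\{\Delta_Y-\varepsilon F\}+\varepsilon F$ coincides componentwise with the paper's $\Delta_Y^{=1}+\{\Delta_Y\}+G$), then apply Theorem~\ref{ap1}(2) to the composition $Y\to X\to S$ and read off the $q=0$ case. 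The only difference is notational packaging.
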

\begin{proof}
Let $f:Y\to X$ be a resolution of $X$ with
$K_Y+\Delta_Y=f^*(K_X+\Delta)$ such
that $\Supp \Delta_Y$ is simple normal crossing.
Then
\begin{align*}
A-N+G+f^*L-(K_Y+\Delta^{=1}_Y
+\{\Delta_Y\}+G)=f^*(L-(K_X+\Delta)),
\end{align*}
where $A=\ulcorner -(\Delta^{<1}_Y)\urcorner$,
$N=\llcorner \Delta^{>1}_Y\lrcorner$,
and $G=\sum _{k=2}^{\infty}{^k}\Delta_Y$. Therefore,
$R^i\pi_*(f_*\cO_Y(A-N+G+f^*L))=0$ for
all $i>0$ by Theorem \ref{ap1} (2).
Thus, we obtain the desired vanishing theorem
since $$f_*\cO_Y(A-N+G+f^*L)
\simeq \mathcal J'(X, \Delta)\otimes \cO_X(L).$$
We finish the proof.
\end{proof}

\begin{rem}\label{777}
When $\Delta$ is effective in Theorem \ref{vani2},
the assumption that $L-(K_X+\Delta)$ is
$\pi$-ample
can be replaced by the following weaker assumption:~
$\pi$ is only proper, $L-(K_X+\Delta)$ is $\pi$-nef and $\pi$-big,
$(L-(K_X+\Delta))|_{\Nlc (X, \Delta)}$ is $\pi$-ample,
and $(L-(K_X+\Delta))|_C$ is $\pi$-big for every lc center
$C$ of $(X, \Delta)$. For details, see
\cite[Theorem 2.47]{fuji-book} and Remark \ref{rem24}.
\end{rem}

We close this section with the following simple example.
Here, we use the notation in \cite[9.3.C Monomial Ideals]{lazarsfeld}.

\begin{thm}\label{monomialchar0}
Let $\mathfrak a$ be a monomial ideal on $X=\mathbb C^n$.
Then $\mathcal J'(c\cdot \mathfrak a)=\mathcal J'((X, 0);
c\cdot \mathfrak a)$ is the monomial
ideal generated by all monomials $x^v$ whose exponent vectors satisfy
the condition that
$$
v+\mathbf {1}\in P(c\cdot \mathfrak a),
$$
where $P(c\cdot \mathfrak a)$ is the {\em{Newton polyhedron}} of
$c\cdot \mathfrak a$.
\end{thm}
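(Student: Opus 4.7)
I would compute $\mathcal{J}'(c \cdot \mathfrak{a})$ directly from its definition on a toric log resolution.  Choose $f\colon Y \to X = \mathbb{A}^n$ to be a toric log resolution of $\mathfrak{a}$ arising from a smooth refinement $\Sigma$ of the positive orthant fan that refines the normal fan of $P(\mathfrak{a})$; let $\rho_1,\dots,\rho_N$ be the primitive ray generators of $\Sigma$, $D_1,\dots,D_N$ the corresponding torus-invariant prime divisors on $Y$, and set $\alpha_i := \min_{w \in P(\mathfrak{a})}\langle \rho_i,w\rangle$ and $|\rho_i| := \langle \rho_i,\mathbf{1}\rangle$.  Standard toric calculations yield $f^{-1}\mathfrak{a} = \mathcal{O}_Y(-E)$ with $E = \sum_i \alpha_i D_i$, $K_{Y/X} = \sum_i (|\rho_i|-1) D_i$, and $\Div_Y(x^v) = \sum_i \langle \rho_i, v\rangle D_i$ for any exponent $v$.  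Because $\mathfrak{a}$ is torus-invariant and $f$ is equivariant, $\mathcal{J}'(c \cdot \mathfrak{a})$ is a monomial ideal, so it suffices to determine the monomials $x^v$ it contains.

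Next I would unwind Definition of $\mathcal{J}'((X,0); \mathfrak{a}^c)$.  With $\Delta = 0$ we have $\Delta_Y = -K_{Y/X}$, so $\Delta_Y + cE$ has coefficient $c\alpha_i - (|\rho_i|-1)$ along $D_i$.  A direct calculation gives the coefficient of $D_i$ in $-\llcorner \Delta_Y + cE \lrcorner + \sum_{k\geq 1} {}^k\!(\Delta_Y + cE)$ as
$$
A_i \;=\; (|\rho_i|-1) - \llcorner c\alpha_i \lrcorner + \delta_i,
$$
where $\delta_i = 1$ if $c\alpha_i \in \mathbb{Z}$ and $c\alpha_i \geq |\rho_i|$, and $\delta_i = 0$ otherwise (so $\delta_i$ records exactly when ${}^k\!(\Delta_Y+cE)$ contributes $D_i$ for some $k \geq 1$).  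Hence $x^v \in \mathcal{J}'(c\cdot \mathfrak{a})$ if and only if
$\langle \rho_i, v+\mathbf{1}\rangle \geq \llcorner c\alpha_i \lrcorner + 1 - \delta_i$
holds for every ray $\rho_i$ of $\Sigma$.

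The final task is to show that this system of integer inequalities is equivalent to the clean polyhedral condition $\langle \rho_i, v+\mathbf{1}\rangle \geq c\alpha_i$ for every $i$, i.e.\ $v + \mathbf{1} \in c\cdot P(\mathfrak{a}) = P(c\cdot \mathfrak{a})$.  Since $\langle \rho_i, v+\mathbf{1}\rangle$ is always an integer and, for $v \in \mathbb{Z}^n_{\geq 0}$, automatically satisfies $\langle \rho_i, v+\mathbf{1}\rangle \geq |\rho_i|$, a three-case check on whether $c\alpha_i$ is an integer and on its size relative to $|\rho_i|$ settles the equivalence; finally, because $\Sigma$ refines the normal fan of $P(\mathfrak{a})$, the inequalities $\langle \rho_i, w\rangle \geq c\alpha_i$ together with $w \geq 0$ cut out exactly $P(c\cdot \mathfrak{a})$.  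The main subtlety is the case $c\alpha_i \in \mathbb{Z}$ with $c\alpha_i < |\rho_i|$: there the divisorial criterion naively gives the strictly stronger inequality $\langle \rho_i, v+\mathbf{1}\rangle \geq c\alpha_i + 1$, and one must use the automatic bound $\langle \rho_i, v+\mathbf{1}\rangle \geq |\rho_i| > c\alpha_i$ — available precisely because the candidates $x^v$ have $v \in \mathbb{Z}^n_{\geq 0}$ — to see that this stronger inequality is satisfied for free, making it equivalent to the target one.
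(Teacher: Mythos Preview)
Your argument is correct, but it takes a substantially different route from the paper. The paper's proof is a single line: since $X$ is smooth and $\Delta=0$, the characterization from Section~\ref{sec1.5} gives $\mathcal{J}'(X,c\cdot\mathfrak{a})=\mathcal{J}(X,(1-\varepsilon)c\cdot\mathfrak{a})$ for $0<\varepsilon\ll 1$, and then Howald's theorem (\cite[Theorem~9.3.27]{lazarsfeld}) identifies this multiplier ideal as the monomial ideal of $x^v$ with $v+\mathbf{1}\in\mathrm{Int}\,P((1-\varepsilon)c\cdot\mathfrak{a})$; letting $\varepsilon\to 0$ turns the open condition into the closed condition $v+\mathbf{1}\in P(c\cdot\mathfrak{a})$.

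You instead work directly from Definition~\ref{def-a} (more precisely, its ideal version) on a toric log resolution, computing the divisor $-\llcorner\Delta_Y+cE\lrcorner+\sum_{k\geq 1}{}^k(\Delta_Y+cE)$ explicitly and translating the section condition into lattice inequalities. This is essentially a reproof of Howald's theorem with the ``$+\varepsilon F$'' correction built in, rather than appealing to Howald as a black box. What your approach buys is self-containment and an explicit illustration of how the term $\sum_{k\geq 1}{}^k(\Delta_Y+cE)$ interacts with the toric data---in particular, your Case~3 analysis (where $c\alpha_i\in\mathbb{Z}$ and $c\alpha_i<|\rho_i|$, so $\delta_i=0$ yet the inequality is vacuous because $\langle\rho_i,v+\mathbf{1}\rangle\geq|\rho_i|$ automatically) makes transparent exactly which divisors the correction term misses and why it doesn't matter. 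The paper's approach is much shorter and cleaner, but hides this mechanism inside the limit $\varepsilon\to 0$ and the citation to Howald.
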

\begin{proof}
It is obvious by Howald's theorem (cf.~\cite[Theorem 9.3.27]{lazarsfeld})
since $\mathcal J'(X, c\cdot \mathfrak a)=\mathcal J(X, (1-\varepsilon)
c\cdot \mathfrak a)$ for
$0<\varepsilon \ll 1$.
\end{proof}

\section{Intermediate non-lc ideal sheaves}\label{sec3.5}
This section is a continuation of Section \ref{sec1.5}.

\begin{say}\label{61}Let $X$ be a normal variety and $\Delta$
an $\mathbb R$-divisor on
$X$ such that $K_X+\Delta$ is $\mathbb R$-Cartier.
Let $f:Y\to X$ be a resolution with
$K_Y+\Delta_Y=f^*(K_X+\Delta)$ such that
$\Supp \Delta_Y$ is simple normal crossing.
Then we set
$$
\mathcal J'_l(X, \Delta)=f_*\cO_Y(-\llcorner
\Delta_Y\lrcorner +\Delta^{=1}_Y+\sum _{k=2-l}^{\infty}{}^k\!\Delta_Y)
$$ for
$l=0, -1, \cdots, -\infty$.
We have the natural inclusions
\begin{align*}
\mathcal J_{NLC}(X, \Delta)=&\mathcal J'_{-\infty}(X, \Delta)
\subset \cdots\subset \mathcal J'_l(X, \Delta)\\
&\subset \mathcal J'_{l+1}(X, \Delta)\subset \cdots \subset
\mathcal J'_0(X, \Delta)=\mathcal J'(X, \Delta).
\end{align*}
Of course, it is obvious that
there are only finitely many distinct ideals in the set $\{\mathcal J'_l(X, \Delta)\}_{l=0, -1, \cdots, -\infty}$.

We note that
$$
\ulcorner -\Delta_Y+\Delta ^{=1}_Y+
\varepsilon F\urcorner=
-\llcorner
\Delta_Y\lrcorner +\Delta^{=1}_Y+\sum _{k=2-l}^{\infty}{}^k\!\Delta_Y
$$
when $F=\Supp \Delta^{\geq 2-l}$ and
$0< \varepsilon \ll  1$.
Thus, $\mathcal J'_l(X, \Delta)$ is well-defined
by the following lemma.

\begin{lem}\label{lem-b}
Let $X$ be a smooth variety and
$\Delta$ an $\mathbb R$-divisor
on $X$ such that
$\Supp \Delta$ is simple normal crossing.
Let $f:Y\to X$ be a proper birational
morphism such that
$\Exc (f)\cup \Supp f^{-1}_*\Delta$ is simple normal crossing.
We put $K_Y+\Delta_Y=f^*(K_X+\Delta)$.
Then
$$
f_*\cO_Y(\ulcorner -\Delta_Y+\Delta ^{=1}_Y+
\varepsilon ' F'\urcorner)\simeq
\cO_X(\ulcorner -\Delta +\Delta^{=1}+\varepsilon F\urcorner)
$$
for $0<\varepsilon, \varepsilon'\ll 1$, where
$F=\Supp \Delta^{\geq m}$ and $F'=\Supp \Delta^{\geq m}_Y$ for
every positive integer $m\geq 2$.
\end{lem}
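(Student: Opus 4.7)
My plan is to mirror the proof of Lemma \ref{lem-a}, with an auxiliary boundary that now also absorbs the reduced part $\Delta^{=1}$. Set
$$B = \{\Delta - \Delta^{=1} - \varepsilon F\} + \Delta^{=1} + \varepsilon F,$$
which for $0 < \varepsilon \ll 1$ is a boundary $\mathbb R$-divisor with simple normal crossing support on $X$, and observe that $\Delta - B = \llcorner \Delta - \Delta^{=1} - \varepsilon F \lrcorner$ is an integral (hence Cartier, since $X$ is smooth) divisor. From $K_Y + \Delta_Y = f^*(K_X + \Delta)$ and the identities
$$\ulcorner -\Delta + \Delta^{=1} + \varepsilon F \urcorner = -\llcorner \Delta - \Delta^{=1} - \varepsilon F \lrcorner, \qquad \ulcorner -\Delta_Y + \Delta^{=1}_Y + \varepsilon' F' \urcorner = -\llcorner \Delta_Y - \Delta^{=1}_Y - \varepsilon' F' \lrcorner,$$
the statement reduces, by the projection formula, to producing an effective $f$-exceptional divisor $E$ on $Y$ satisfying
$$\ulcorner -\Delta_Y + \Delta^{=1}_Y + \varepsilon' F' \urcorner \;=\; f^* \ulcorner -\Delta + \Delta^{=1} + \varepsilon F \urcorner + E,$$
since $f_*\cO_Y(E) = \cO_X$ for any such $E$.

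Computing coefficient-wise and using the identity $a(\nu, X, \Delta) = a(\nu, X, B) - \mathrm{ord}_\nu f^*\llcorner \Delta - \Delta^{=1} - \varepsilon F \lrcorner$, the coefficient of $E$ at a prime divisor $\nu \subset Y$ becomes
$$\ulcorner a(\nu, X, B) + \mathrm{ord}_\nu \Delta^{=1}_Y + \varepsilon'\,\mathrm{ord}_\nu F' \urcorner.$$
Since $B$ is an snc boundary on a smooth variety, $a(\nu, X, B) \geq -1$, so effectivity of $E$ reduces to the case $a(\nu, X, B) = -1$, where one must verify $\nu \subset \Delta^{=1}_Y \cup F'$. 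The reduced part $\llcorner B \lrcorner$ is supported exactly on $\Delta^{=1} \cup F_{\mathrm{int}}$, where $F_{\mathrm{int}}$ denotes the reduced divisor on those components of $\Delta^{\geq m}$ having integer coefficient in $\Delta$. An application of \cite[Lemma 2.45]{km} (comparing $(X, B)$ with $(X, \Delta^{=1} + F_{\mathrm{int}})$ and with $(X, \Delta)$) forces $c_X(\nu)$ to be a stratum of $\Delta^{=1} + F_{\mathrm{int}}$ and to meet no component of $\Supp\Delta$ outside of $\Delta^{=1} + F$.

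The crux is then to show any such $\nu$ lies in $\Delta^{=1}_Y \cup F'$. When $\nu$ is the strict transform of a component $D \subset \Delta^{=1}$ (resp.\ $D \subset F_{\mathrm{int}}$) this is immediate, since $\mathrm{coef}_\nu \Delta_Y = 1$ (resp.\ $\geq m$). The principal (if mild) obstacle is the $f$-exceptional case: here I would invoke the adjunction identity
$$\mathrm{coef}_\nu \Delta_Y \;=\; 1 + \sum_i (c_i - 1)\,\mathrm{ord}_\nu f^* D_i,$$
where $\{D_i\}$ are the components of $\Delta^{=1} + F_{\mathrm{int}}$ through $c_X(\nu)$ with $\Delta$-coefficients $c_i$, an identity which follows from $a\bigl(\nu, X, \sum_i D_i\bigr) = -1$ together with the fact that $\Delta = \sum_i c_i D_i$ in a neighborhood of the generic point of $c_X(\nu)$. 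If some $D_i \subset F_{\mathrm{int}}$, then $c_i \geq m$ and $\mathrm{ord}_\nu f^* D_i \geq 1$ yield $\mathrm{coef}_\nu \Delta_Y \geq m$, so $\nu \subset F'$; otherwise all $c_i = 1$ and $\mathrm{coef}_\nu \Delta_Y = 1$, so $\nu \subset \Delta^{=1}_Y$. Finally, the $f$-exceptionality of $E$ is a routine check of the coefficient formula on the strict transform of each prime divisor $D \subset X$, broken into the cases $D \not\subset \Supp\Delta$, $D \subset \Delta^{=1}$, $D \subset F$, and $D$ a remaining component of $\Supp\Delta$; in each case the coefficient works out to $0$ for $0 < \varepsilon, \varepsilon' \ll 1$.
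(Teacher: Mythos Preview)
Your proof is correct and follows essentially the same route as the paper's own argument: the same auxiliary boundary $B=\{\Delta-\Delta^{=1}-\varepsilon F\}+\Delta^{=1}+\varepsilon F$, the same reduction to showing that the discrepancy divisor $E$ is effective and $f$-exceptional, and the same use of \cite[Lemma 2.45]{km} to handle the case $a(\nu,X,B)=-1$. The only difference is one of exposition: where the paper simply asserts (with a citation) that $a(\nu,X,\Delta^{=1}+F)=-1$ forces $a(\nu,X,\Delta)=-1$ or $a(\nu,X,\Delta)\leq -m$, you unpack this step via the explicit coefficient identity $\mathrm{coef}_\nu\Delta_Y=1+\sum_i(c_i-1)\,\mathrm{ord}_\nu f^*D_i$.
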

\begin{proof}
Since $K_Y+\Delta_Y=f^*(K_X+\Delta)$, we can
write
$$
K_Y=f^*(K_X+\{\Delta-\Delta^{=1}-\varepsilon F\}+\Delta^{=1}+\varepsilon F)
+f^*\llcorner \Delta-\Delta^{=1}-\varepsilon F\lrcorner-\Delta_Y.
$$
We note that $\{\Delta -\Delta^{=1}-\varepsilon F\}+\Delta^{=1}
+\varepsilon F$ is a boundary
$\mathbb R$-divisor whose support is simple normal crossing.
Thus, $$a(\nu, X,
\{\Delta -\Delta^{=1}-\varepsilon F\}+\Delta^{=1}
+\varepsilon F)\geq -1$$ for
every $\nu$.
We can easily check that $a(\nu, X, \Delta^{=1}+F)=-1$
if $a(\nu, X, \{\Delta -\Delta^{=1}-\varepsilon F\}+\Delta^{=1}
+\varepsilon F)=-1$ and
that $a(\nu, X, \Delta^{=1}+F)=-1$ induces $a(\nu, X, \Delta)= -1$
or $a(\nu, X, \Delta)\leq -m$ (cf.~\cite[Lemma 2.45]{km}).
Therefore, the round-up of
$f^*\llcorner \Delta-\Delta^{=1}-
\varepsilon F\lrcorner-\Delta_Y+\Delta^{=1}_Y+\varepsilon 'F'$ is
effective.
So, we can write
$$
f^*\llcorner \Delta-\Delta^{=1}-\varepsilon F\lrcorner-\llcorner
\Delta_Y-\Delta^{=1}_Y-\varepsilon 'F'\lrcorner=E,
$$
where $E$ is an effective Cartier divisor on $Y$.
We can easily check that
$E$ is $f$-exceptional for $0<\varepsilon, \varepsilon'\ll1$.
Thus, we obtain
$$
f_*\cO_Y(\ulcorner -\Delta_Y+\Delta^{=1}_Y
+\varepsilon ' F'\urcorner)\simeq
\cO_X(\ulcorner -\Delta +\Delta^{=1}+\varepsilon F\urcorner)
$$
since $\ulcorner -\Delta_Y+\Delta^{=1}_Y+\varepsilon 'F'\urcorner=f^*(\ulcorner
-\Delta+\Delta^{=1}+\varepsilon F\urcorner)+E$.
\end{proof}

The next property is obvious by the definition of
$\mathcal J'_l$.

\begin{lem}
Let $X$ be a normal variety and $\Delta$ an
effective $\mathbb R$-divisor on $X$ such
that $K_X+\Delta$ is $\mathbb R$-Cartier.
Then, for every $l$,
$(X, \Delta)$ is log canonical
if and only if $\mathcal J'_l(X, \Delta)=\cO_X$.
\end{lem}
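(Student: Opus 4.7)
The plan is to deduce the lemma directly from the sandwich of inclusions
$$\mathcal J_{NLC}(X, \Delta)\subset \mathcal J'_l(X, \Delta)\subset \mathcal J'(X, \Delta)\subset \cO_X$$
together with the two analogous equivalences already established for the endpoints: namely, the lemma at the end of Section \ref{sec3-non} which says $(X,\Delta)$ is lc iff $\mathcal J_{NLC}(X,\Delta)=\cO_X$, and the lemma in Section \ref{sec2} which says $(X,\Delta)$ is lc iff $\mathcal J'(X,\Delta)=\cO_X$. These inclusions follow directly from the defining formula
$$\mathcal J'_l(X, \Delta)=f_*\cO_Y\bigl(-\llcorner \Delta_Y\lrcorner + \Delta^{=1}_Y+\textstyle\sum_{k=2-l}^{\infty}{}^k\!\Delta_Y\bigr),$$
since enlarging $l$ toward $0$ only adds further effective summands of the form ${}^k\!\Delta_Y$ inside the $f_*\cO_Y(\cdot)$, and the endpoints $l=-\infty$ and $l=0$ recover $\mathcal J_{NLC}$ and $\mathcal J'$ respectively (this chain of inclusions is already stated in \ref{61}).

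With the sandwich in hand, both implications become immediate. First, if $(X,\Delta)$ is log canonical, then $\mathcal J_{NLC}(X,\Delta)=\cO_X$ by the lemma in Section \ref{sec3-non}, so the inclusion $\mathcal J_{NLC}(X,\Delta)\subset \mathcal J'_l(X,\Delta)\subset \cO_X$ forces $\mathcal J'_l(X,\Delta)=\cO_X$. Conversely, if $\mathcal J'_l(X,\Delta)=\cO_X$, the inclusion $\mathcal J'_l(X,\Delta)\subset \mathcal J'(X,\Delta)\subset \cO_X$ forces $\mathcal J'(X,\Delta)=\cO_X$, and hence $(X,\Delta)$ is log canonical by the lemma in Section \ref{sec2}.

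There is essentially no obstacle here; the content of the statement has already been packaged into the two endpoint lemmas and the inclusion chain. If one preferred a self-contained argument that avoids citing the endpoint lemmas, one could instead argue directly from the definition: lc-ness of $(X,\Delta)$ means $\Delta_Y^{>1}=0$, whence the summation $\sum_{k=2-l}^{\infty}{}^k\!\Delta_Y$ vanishes and the formula collapses to $\mathcal J_{NLC}(X,\Delta)$, which one then checks equals $\cO_X$ using that $-\llcorner\Delta_Y\lrcorner+\Delta_Y^{=1}$ is effective off the $f$-exceptional locus when $(X,\Delta)$ is lc; the reverse implication again uses that $\mathcal J'_l\subset \mathcal J'$ together with the already-proved characterization of lc-ness via $\mathcal J'$. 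The only mild care required is verifying that the $f$-exceptional part of $-\llcorner\Delta_Y\lrcorner+\Delta_Y^{=1}$ pushes forward to $\cO_X$, which is the same computation underlying Lemma \ref{lem-b}.
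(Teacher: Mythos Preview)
Your argument is correct. The paper itself gives no proof of this lemma, merely stating that it is ``obvious by the definition of $\mathcal J'_l$''; your sandwich argument using the inclusion chain from \ref{61} together with the two endpoint characterizations is a clean and natural way to make this obviousness explicit.
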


We note the following Bertini type theorem.

\begin{lem}
Let $X$ be a normal variety and $\Delta$ an effective $\mathbb R$-divisor
on $X$ such that $K_X+\Delta$ is $\mathbb R$-Cartier.
Let $\Lambda$ be a linear system on $X$ and $D\in \Lambda$ a general
member of $\Lambda$. Then
$$\mathcal J'_l(X, \Delta)=\mathcal J'_l(X, \Delta +tD)$$ outside
the base locus $\Bs \Lambda$ of $\Lambda$ for
all $0\leq t\leq 1$ and $l$.
\end{lem}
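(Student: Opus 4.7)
The plan is to compute both sides of the claimed equality on a single, sufficiently fine log resolution and compare the resulting pullback divisors componentwise.

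First I would choose a log resolution $f\colon Y\to X$ of $(X,\Delta)$ which simultaneously resolves the linear system $\Lambda$; more precisely, I take $f$ so that $\Exc(f)\cup \Supp \Delta_Y\cup \Supp(f^{-1}\mathfrak b(\Lambda))$ is simple normal crossing, where $\mathfrak b(\Lambda)$ is the base ideal of $\Lambda$. Write $f^{-1}\mathfrak b(\Lambda)=\cO_Y(-F_0)$; then the moving part of $f^{*}\Lambda$ is base-point-free, so by the usual Bertini theorem applied on $Y$, for a general $D\in\Lambda$ the pullback decomposes as $f^{*}D=\tilde D+F_0$, where $\tilde D$ is a smooth prime divisor, $\tilde D$ is not a component of $\Delta_Y$, and $\Supp(\Delta_Y+\tilde D+F_0)$ is still simple normal crossing. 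In particular $f$ is also a log resolution of $(X,\Delta+tD)$ for every $0\leq t\leq 1$, and
\[
(\Delta+tD)_Y \;=\; \Delta_Y+t\tilde D+tF_0.
\]

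Next, set $U:=X\setminus\Bs\Lambda$ and restrict everything to $f^{-1}(U)$. Since $F_0$ is supported over $\Bs\Lambda$, over $f^{-1}(U)$ we have $(\Delta+tD)_Y=\Delta_Y+t\tilde D$, and $\tilde D$ shares no component with $\Delta_Y$. I would then compare the integer divisors
\[
A_{l,\Delta}:=-\llcorner \Delta_Y\lrcorner+\Delta_Y^{=1}+\sum_{k=2-l}^{\infty}{}^k\!\Delta_Y,
\qquad A_{l,\Delta+tD}:=-\llcorner (\Delta+tD)_Y\lrcorner+(\Delta+tD)_Y^{=1}+\sum_{k=2-l}^{\infty}{}^k\!(\Delta+tD)_Y
\]
on $f^{-1}(U)$ by looking at the coefficient of $\tilde D$ in each piece, in the two cases $0\leq t<1$ and $t=1$. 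For $t<1$ the coefficient $t$ of $\tilde D$ rounds down to $0$ and $\tilde D$ does not enter any ${}^k\!$-stratum for $k\geq 1$, so $A_{l,\Delta+tD}=A_{l,\Delta}$ on $f^{-1}(U)$. For $t=1$ the coefficient of $\tilde D$ is $1$, so $-\llcorner\cdot\lrcorner$ picks up $-\tilde D$ while $(\Delta+D)_Y^{=1}$ picks up $+\tilde D$, and these cancel; the terms ${}^k$ for $k\geq 2-l\geq 2$ are unchanged. In either case the divisors agree on $f^{-1}(U)$.

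Finally, since the formation of a pushforward is compatible with restriction to an open set, $f_*\cO_Y(A_{l,\Delta+tD})|_U=f_*\cO_Y(A_{l,\Delta})|_U$, which is exactly the claimed equality on $U=X\setminus\Bs\Lambda$. The main subtlety is the delicate endpoint $t=1$: the cancellation between the round-down and the coefficient-$1$ stratum $\Delta_Y^{=1}$ is precisely what keeps $\mJ'_l$ invariant under adding a general member with multiplicity $1$, and it is the reason the statement is sharp at $t=1$ but fails for $t>1$. Everything else is formal once Bertini gives a common log resolution on which $\tilde D$ is transverse to $\Delta_Y$.
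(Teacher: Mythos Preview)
Your proof is correct and follows essentially the same approach as the paper's: both arguments pass to a log resolution on which, by Bertini, the pullback of a general $D\in\Lambda$ is a smooth divisor transverse to $\Supp\Delta_Y$ over $X\setminus\Bs\Lambda$, and then check directly that the divisor $-\llcorner(\Delta+tD)_Y\lrcorner+(\Delta+tD)_Y^{=1}+\sum_{k\geq 2-l}{}^k\!(\Delta+tD)_Y$ agrees with the corresponding divisor for $\Delta$. The only cosmetic difference is that the paper restricts to $X\setminus\Bs\Lambda$ first and then chooses the resolution, whereas you resolve the base locus globally and restrict afterwards; your explicit treatment of the endpoint $t=1$ is a nice addition that the paper leaves implicit.
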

\begin{proof}
By replacing $X$ with $X\setminus \Bs\Lambda$, we can assume
that $\Bs \Lambda=\emptyset$.
Let $f:Y\to X$ be a resolution as in \ref{61}.
Since $D$ is a general member of $\Lambda$,
$f^*D=f^{-1}_*D$ is a smooth divisor on $Y$ such that
$\Supp f^*D\cup \Supp \Delta_Y$ is simple normal
crossing.
Therefore, we can check that
$$
\mathcal J'_l(X, \Delta +tD)=\mathcal J'_l(X, \Delta)
$$
for all $0\leq t\leq 1$ and $l$.
\end{proof}

The vanishing theorem also holds for $\mathcal J'_l$.

\begin{thm}[Vanishing theorem]\label{vani3}
Let $X$ be a normal variety and $\Delta$
an $\mathbb R$-divisor on $X$
such that $K_X+\Delta$ is $\mathbb R$-Cartier.
Let $\pi:X\to S$ be a projective
morphism onto an algebraic variety $S$ and
$L$ a Cartier divisor on $X$.
Assume that $L-(K_X+\Delta)$ is $\pi$-ample.
Then we have
$$
R^i\pi_*(\mathcal J'_l(X, \Delta)\otimes \cO_X(L))=0
$$
for all $i>0$ and every $l$.
\end{thm}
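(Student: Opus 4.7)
The plan is to run the argument of Theorem \ref{vani2} essentially verbatim, replacing the ``tail'' $G=\sum_{k=2}^{\infty}{}^k\!\Delta_Y$ used there by the truncated tail $G_l=\sum_{k=2-l}^{\infty}{}^k\!\Delta_Y$ that occurs in the definition of $\mJ'_l$. First I fix a log resolution $f:Y\to X$ with $K_Y+\Delta_Y=f^*(K_X+\Delta)$ and $\Supp\Delta_Y$ simple normal crossing, and set $A=\ulcorner -(\Delta_Y^{<1})\urcorner$ and $N=\llcorner \Delta_Y^{>1}\lrcorner$. The standard splitting
$$\llcorner\Delta_Y\lrcorner=-A+\Delta_Y^{=1}+N$$
of the round-down into its $<1$, $=1$, $>1$ pieces gives $\mJ'_l(X,\Delta)\simeq f_*\cO_Y(A-N+G_l)$, so by the projection formula the desired vanishing reduces to $R^i\pi_* f_*\cO_Y(M)=0$ for $i>0$, where $M:=A-N+G_l+f^*L$.

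The key step is to exhibit a boundary $\mathbb R$-divisor $B_Y$ on $Y$ with simple normal crossing support such that $M-(K_Y+B_Y)$ is the $f$-pullback of the $\pi$-ample divisor $L-(K_X+\Delta)$, since Theorem \ref{ap1}(2) will then apply. I would take $B_Y:=\Delta_Y^{=1}+\{\Delta_Y\}+G_l$; the required identity
$$M-(K_Y+B_Y)=f^*(L-(K_X+\Delta))$$
follows directly from $K_Y+\Delta_Y=f^*(K_X+\Delta)$ combined with the splitting of $\llcorner\Delta_Y\lrcorner$ above, exactly mirroring the calculation in the proof of Theorem \ref{vani2}.

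The one piece requiring honest (if elementary) verification --- and therefore the main obstacle, mild as it is --- is showing that $B_Y$ is genuinely a boundary for \emph{every} value of $l\in\{0,-1,\dots,-\infty\}$. This is a coefficient-by-coefficient check on each prime component $D_j\subset \Supp \Delta_Y$ with coefficient $d_j$: the contribution to $B_Y$ is $d_j\in[0,1)$ when $0\leq d_j<1$, $1$ when $d_j=1$, $\{d_j\}\in[0,1)$ when $d_j>1$ is either a non-integer or an integer smaller than $2-l$, and $1$ when $d_j$ is an integer $\geq 2-l$ --- all uniformly in $[0,1]$. One must be careful that, for $l$ strictly negative, integer coefficients strictly between $1$ and $2-l$ contribute $0$ to $G_l$ and hence only via $\{\Delta_Y\}$, but this still keeps $B_Y$ a boundary.

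With these in place, Theorem \ref{ap1}(2) applied to $M$, the pair $(Y,B_Y)$, and the morphisms $f:Y\to X$, $\pi:X\to S$ yields $R^p\pi_* R^q f_*\cO_Y(M)=0$ for every $p>0$ and $q\geq 0$; in particular $R^i\pi_* f_*\cO_Y(M)=0$ for $i>0$, and combining this with the projection-formula identification $f_*\cO_Y(M)\simeq \mJ'_l(X,\Delta)\otimes\cO_X(L)$ finishes the proof. As a consistency check, the extreme cases $l=0$ and $l=-\infty$ recover Theorem \ref{vani2} and the known vanishing theorem for $\mJ_{NLC}$ from \cite{fundamental}, respectively.
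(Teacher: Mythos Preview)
Your proof is correct and follows exactly the same approach as the paper's own argument: the paper simply remarks that setting $G=\sum_{k=2-l}^{\infty}{}^k\!\Delta_Y$ in place of $G=\sum_{k=2}^{\infty}{}^k\!\Delta_Y$, the proof of Theorem~\ref{vani2} works without any changes. You have merely spelled out the details of that verification, including the coefficient-by-coefficient check that $B_Y=\Delta_Y^{=1}+\{\Delta_Y\}+G_l$ remains a boundary for every $l$, which the paper leaves implicit.
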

\begin{proof}
We put $G=\sum _{k=2-l}^{\infty}{}^k\!\Delta_Y$. Then
the proof of Theorem \ref{vani2} works without any changes.
\end{proof}

Note that Remark \ref{777} works for every $l$.
We also note that the restriction theorem does not necessarily hold
for $l\ne 0, -\infty$.

\begin{ex}\label{intermediateexample}
Let $X=\mathbb C^2=\Spec \mathbb C[x, y]$,
$S=(x=0)$, $C=(y^2=x^3)$ and $B=2C$.
We put
$K_S+B_S=(K_X+S+B)|_S$,
and we will compare the intermediate non-lc ideals of $(X, S+B)$ and of $(S, B_S)$.

We consider the following sequence of blow-ups:
\[
X\stackrel{f_1}{\longleftarrow}X_1\stackrel{f_2}{\longleftarrow}X_2\stackrel{f_3}{\longleftarrow}X_3.
\]
We denote by $E_i$ the exceptional curve of $f_i$ (and we use the same letter for its strict transform).
Let $f_1: X_1 \to X$ be the blow-up at the origin,
$f_2:X_2 \to X_1$ be the blow-up at the intersection point of $E_1$ and $C$ and
$f_3: X_3 \to X_2$ be the blow-up at the intersection point of $E_1$, $E_2$ and $C$.
Then $\pi:=f_3 \circ f_2 \circ f_1: X_3 \to X$ is a log resolution of $(X, S+B)$, and we have
$K_{X_3/X}=E_1+2E_2+4E_3$, $\pi^*B=4E_1+6E_2+12E_3+B$ and $\pi^*S=E_1+E_2+2E_3+S$.
By the projection formula, we obtain
\begin{align*}
\mathcal{J}'_{-1}(X, S+B)&=\pi_*\mathcal{O}_{X_3}(-3E_1-4E_2-9E_3-B)\\
                                            &=\pi_*(\mathcal{O}_{X_3}(E_1+2E_2+3E_3) \otimes \pi^*\mathcal{O}_X(-B))\\
                                            &=\mathcal{O}_X(-B).
\end{align*}
On the other hand, since $B_S=(y^4=0)$ in $S$, one can easily see that
$$
\mathcal J'_{-1}(S, B_S)=\mathfrak m^3, \ \ \ \
$$
where $\mathfrak m$ is the maximal
ideal corresponding to $0\in S$.
Of course, we have
$$
\mathcal J'_{-1}(X, S+B)|_S=\cO_X(-B)|_S=\mathfrak m^4.
$$
Thus, we obtain
$$
\mathcal J'_{-1}(X, S+B)|_S\subsetneq \mathcal J'_{-1}(S, B_S).
$$
\end{ex}
\end{say}

\section{Supplementary remarks}\label{sec7}

This section is a supplement to \cite{fundamental}.
We consider various scheme structures on the non-lc locus of
the pair $(X, B)$.

\begin{defn}
Let $X$ be a normal variety and
$B$ an effective $\mathbb R$-divisor on $X$ such that
$K_X+B$ is $\mathbb R$-Cartier.
We put
$$
\cO_{\Nlc(X, B)_l}=\cO_X/\mathcal J'_l(X, B)
$$
for every $l=-\infty, \cdots, 0$.
We simply write
$$\cO_{\Nlc(X, B)}=\cO_{\Nlc(X, B)_{-\infty}}
=\cO_X/\mathcal J_{NLC}(X, B)
$$
and $$
\cO_{\Nlc(X, B)'}=\cO_{\Nlc(X, B)_0}=\cO_X/\mathcal J'_0(X, B)=\mathcal
O_X/\mathcal J'(X, B).
$$
We note that there exists natural surjection
$$
\cO_{\Nlc(X, B)_l}\to \cO_{\Nlc(X, B)_k}
$$
for every $l<k$.
\end{defn}

The following theorem is a slight generalization of
\cite[Theorem 8.1]{fundamental}.
The proof of \cite[Theorem 8.1]{fundamental}
works without any modifications.

\begin{thm}\label{722}
Let $l$ be an arbitrary non-positive integer or $-\infty$.
Let $X$ be a normal
variety and $B$ an effective $\mathbb R$-divisor
on $X$ such that
$K_X+B$ is $\mathbb R$-Cartier.
Let $D$ be a Cartier divisor on $X$.
Assume that $D-(K_X+B)$ is $\pi$-ample,
where $\pi:X\to S$ is a projective morphism onto a variety $S$.
Let $\{C_i\}$ be {\em{any}} set of
lc centers of the pair $(X, B)$.
We put $W=\bigcup C_i$ with the reduced scheme structure.
Assume that $W$ is disjoint from $\Nlc (X, B)$.

Then we have
$$
R^i\pi_*(\mathcal J\otimes \cO_X(D))=0
$$
for every $i>0$, where $\mathcal J=\mathcal I_W\cdot \mathcal
J'_l(X, B)\subset \cO_X$ and
$\mathcal I_W$ is the defining
ideal sheaf of $W$ on $X$.
Therefore, the restriction
map
$$
\pi_*\cO_X(D)\to \pi_*\cO_W(D)\oplus
\pi_*\cO_{\Nlc(X, B)_l}(D)
$$
is surjective and
$$
R^i\pi_*\cO_W(D)=0
$$
for every $i>0$.
In particular, the restriction maps
$$\pi_*\cO_X(D)\to \pi_*\cO_W(D)$$
and $$
\pi_*\cO_X(D)\to \pi_*\cO_{\Nlc(X, B)_l}(D)
$$
are surjective.
\end{thm}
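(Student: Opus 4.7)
The plan is to mimic the proof of Theorem \ref{vani3}, but with an extra reduced divisor $T$ on the log resolution that accounts for the factor $\mathcal I_W$. Once the main vanishing $R^i\pi_*(\mathcal J\otimes\cO_X(D))=0$ is established, the remaining assertions follow from elementary sheaf-cohomology manipulations using that $W$ and $\Nlc(X,B)$ are disjoint. Since the case $l=-\infty$ is precisely \cite[Theorem 8.1]{fundamental}, the task is to check that the argument goes through verbatim with the truncated cutoff $G=\sum_{k\ge 2-l}{}^{k}\!B_Y$ in place of $G=\sum_{k\ge 2}{}^{k}\!B_Y$.

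I would take a projective log resolution $f\colon Y\to X$ with $K_Y+B_Y=f^*(K_X+B)$ such that $\Supp B_Y\cup f^{-1}(W)\cup \Exc(f)$ has simple normal crossing support. Since every lc center of $(X,B)$ is the $f$-image of some stratum of $(Y,B_Y^{=1})$, and since the hypothesis $W\cap \Nlc(X,B)=\emptyset$ forces $\mathcal J'_l(X,B)=\cO_X$ in a neighborhood of $W$, the main obstacle is to extract a reduced subdivisor $T$ of $B_Y^{=1}$ with $f(T)=W$ whose pushforward realizes $\mathcal I_W$ exactly. Concretely, one needs
$$f_*\cO_Y\!\Bigl(-\llcorner B_Y\lrcorner+B_Y^{=1}-T+\sum_{k\ge 2-l}{}^{k}\!B_Y\Bigr)=\mathcal I_W\cdot \mathcal J'_l(X,B),$$
rather than a strictly larger ideal; arranging this requires the log resolution to be sufficiently general around $W$, and is exactly the delicate step carried out in \cite[Theorem 8.1]{fundamental}.

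Granting this identification, set
$$L=-\llcorner B_Y\lrcorner+B_Y^{=1}-T+\sum_{k\ge 2-l}{}^{k}\!B_Y+f^*D.$$
Paralleling the computation in the proof of Theorem \ref{vani2}, one checks
$$L-\Bigl(K_Y+(B_Y^{=1}-T)+\{B_Y\}+\sum_{k\ge 2-l}{}^{k}\!B_Y\Bigr)\sim_{\mathbb R} f^*\bigl(D-(K_X+B)\bigr),$$
and the right-hand side is the pullback of a $\pi$-ample $\mathbb R$-divisor. Absorbing the reduced high-multiplicity divisor $\sum_{k\ge 2-l}{}^{k}\!B_Y$ into the ambient simple normal crossing divisor as in the embedded SNC pair formalism of Section \ref{sec4}, the effective boundary is $(B_Y^{=1}-T)+\{B_Y\}$, and Theorem \ref{ap1}(2) yields $R^i(\pi\circ f)_*\cO_Y(L)=0$ for all $i>0$. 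The Leray spectral sequence then delivers $R^i\pi_*(\mathcal J\otimes \cO_X(D))=0$ for $i>0$.

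For the remaining conclusions, $W\cap \Nlc(X,B)=\emptyset$ together with $\Supp(\cO_X/\mathcal J'_l(X,B))\subset \Nlc(X,B)$ implies via the Chinese remainder theorem that $\cO_X/\mathcal J\cong \cO_W\oplus \cO_{\Nlc(X,B)_l}$. The short exact sequence
$$0\to \mathcal J\otimes \cO_X(D)\to \cO_X(D)\to \bigl(\cO_W\oplus \cO_{\Nlc(X,B)_l}\bigr)\otimes \cO_X(D)\to 0,$$
combined with $R^1\pi_*$ of the leftmost term being zero, yields surjectivity of the restriction map onto the direct sum, and hence onto each summand. Finally, for $R^i\pi_*\cO_W(D)=0$, I would use the exact sequence $0\to \mathcal J\to \mathcal J'_l(X,B)\to \cO_W\to 0$ — exact because $\mathcal J'_l(X,B)=\cO_X$ near $W$ — tensor with $\cO_X(D)$, apply $\pi_*$, and invoke the vanishing from Theorem \ref{vani3} together with the vanishing just established.
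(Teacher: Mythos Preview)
Your proposal is correct and follows the same approach as the paper, which simply observes that the proof of \cite[Theorem 8.1]{fundamental} works without modification once the extra reduced divisor $G=\sum_{k\ge 2-l}{}^{k}\!B_Y$ is carried along. One small slip: Theorem \ref{ap1}(2) does not assert $R^i(\pi\circ f)_*\cO_Y(L)=0$; it asserts $R^p\pi_*R^qf_*\cO_Y(L)=0$ for all $p>0$ and $q\ge 0$, so taking $q=0$ gives $R^i\pi_*(\mathcal J\otimes\cO_X(D))=R^i\pi_*f_*\cO_Y(L)=0$ directly, with no appeal to Leray needed.
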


We close this section with
the next supplementary result.
The proof is obvious.

\begin{prop}
In the non-vanishing theorem {\em{(}}cf.~\cite[Theorem 12.1]{fundamental}{\em{)}}
and the base point free theorem {\em{(}}cf.~\cite[Theorem 13.1]{fundamental}{\em{)}},
we assumed that
$\cO_{\Nlc(X, B)}(mL)$ is $\pi|_{\Nlc (X, B)}$-generated for every
$m\gg 0$.
However, it is sufficient to assume that
$\cO_{\Nlc (X, B)_l}(mL)$ is $\pi|_{\Nlc (X, B)_l}$-generated
for every $m\gg 0$,
where $l$ is any non-positive integer.
We note that, for every $l$,
$$
\pi_*\cO_X(mL)\to \pi_*\cO_{\Nlc(X, B)_l}(mL)
$$
is surjective for $m\geq a$ by the vanishing theorem
{\em{(}}cf.~{\em{Theorem \ref{722}}}{\em{)}}.
\if0
Therefore,
if there exists $l_0$ such that
$\cO_{\Nlc (X, B)_{l_0}}(mL)$ is $\pi|_{\Nlc (X, B)_{l_0}}$-generated
for every $m\gg 0$, then $\cO_{\Nlc (X, B)_l}(mL)$ is
$\pi|_{\Nlc (X, B)_l}$-generated for every $m \gg 0$ and for every $l$.
\fi
\end{prop}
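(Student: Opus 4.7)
The strategy is to inspect the proofs of the non-vanishing and base-point-free theorems in \cite{fundamental} and observe that they only ever use two properties of the minimal non-lc ideal $\mJ_{NLC}(X,B)$, both of which are also shared by $\mJ'_l(X,B)$ for any non-positive integer $l$. First, the inclusion $\mJ_{NLC}(X,B)\subset \mJ'_l(X,B)$ yields a surjection of coherent sheaves $\cO_{\Nlc(X,B)}\twoheadrightarrow \cO_{\Nlc(X,B)_l}$, so both scheme structures are supported on the same closed subset, namely $\Nlc(X,B)$. Consequently the phrases ``$\pi|_{\Nlc(X,B)}$-generated for $m\gg 0$'' and ``$\pi|_{\Nlc(X,B)_l}$-generated for $m\gg 0$'' refer to generation above the same underlying set, and the former is strictly stronger since it is preserved under the quotient surjection.

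With the generation hypothesis now phrased for $\cO_{\Nlc(X,B)_l}(mL)$, the key mechanism of the proofs in \cite{fundamental} is the ability to lift generating sections from the non-lc locus to global sections on $X$. This lifting is exactly the surjectivity of
\[
\pi_*\cO_X(mL)\twoheadrightarrow \pi_*\cO_{\Nlc(X,B)_l}(mL),
\]
valid as soon as $mL-(K_X+B)$ is $\pi$-ample, which is the last assertion of the statement and is obtained by applying Theorem \ref{722} with $D=mL$ (taking, say, $W=\emptyset$, or an appropriate union of lc centers if one requires the full form). Theorem \ref{722} provides exactly this for every $l$, because its proof depends only on the vanishing theorem (Theorem \ref{vani3}) together with the formal properties of $\mJ'_l$.

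Armed with these two ingredients, one simply transcribes the proofs of \cite[Theorems 12.1 and 13.1]{fundamental} with $\mJ_{NLC}(X,B)$ everywhere replaced by $\mJ'_l(X,B)$ and $\cO_{\Nlc(X,B)}$ replaced by $\cO_{\Nlc(X,B)_l}$. The base-locus analysis and inductive arguments depend only on the topological support of the non-lc locus (unchanged) and on Kodaira-type vanishing (supplied by Theorem \ref{722}), so nothing else needs to be checked. No real obstacle arises; the only work is bookkeeping to verify that at each appeal to the vanishing or to the surjectivity of a restriction map, one may use Theorem \ref{722} in place of the corresponding statement for $\mJ_{NLC}$. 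This is why the authors describe the proof as obvious.
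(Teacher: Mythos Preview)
Your proposal is correct and matches the paper's approach: the paper simply says ``The proof is obvious,'' and you have spelled out precisely why, namely that the arguments in \cite{fundamental} use only the support of the non-lc locus and the lifting surjectivity supplied by Theorem \ref{722}, both of which hold verbatim with $\mJ'_l$ in place of $\mJ_{NLC}$. Your remark that the original hypothesis is strictly stronger is correct but incidental; the substantive content is the transcription argument you give in the second and third paragraphs.
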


Therefore, in \cite{fundamental},
we can adopt $\mathcal J'_l(X, B)$ for any $l$ instead of
$\mathcal J_{NLC}(X, B)$. However, from the point of view of the minimal model program,
we believe that
$\mathcal J_{NLC}(X, B)$ is the most natural defining ideal sheaf of the
non-lc locus $\Nlc (X, B)$ of $(X, B)$.
Also see Remark \ref{977} below.

\section{Non-klt ideal sheaves}\label{sec-klt}

In this section, we consider {\em{non-klt ideal sheaves}}.

\begin{say}\label{81}
Let $X$ be a normal variety and $\Delta$
an $\mathbb R$-divisor on
$X$ such that $K_X+\Delta$ is $\mathbb R$-Cartier.
Let $f:Y\to X$ be a resolution with
$K_Y+\Delta_Y=f^*(K_X+\Delta)$ such that
$\Supp \Delta_Y$ is simple normal crossing.
Then we put
$$
\mathcal J_l(X, \Delta)=f_*\cO_Y(-\llcorner
\Delta_Y\lrcorner+\sum _{k=2-l}^{\infty}{}^k\!\Delta_Y)
$$ for
$l=0, -1, \cdots, -\infty$.
We have the natural inclusions
\begin{align*}
\mathcal J(X, \Delta)=&\mathcal J_{-\infty}(X, \Delta)
\subset \cdots\subset \mathcal J_l(X, \Delta)\\
&\subset \mathcal J_{l+1}(X, \Delta)\subset \cdots \subset
\mathcal J_0(X, \Delta).
\end{align*}
We note that $\mathcal J(X, \Delta)=\mathcal J_{-\infty}(X, \Delta)$ is
the usual multiplier ideal sheaf associated to the pair $(X, \Delta)$.
Of course, it is obvious that
there are only finite numbers of
different ideals in $\{\mathcal J_l(X, \Delta)\}_{l=0, -1, \cdots, -\infty}$.
It is also obvious that
$$
\mathcal J_l (X, \Delta)\subset \mathcal J'_l(X, \Delta)
$$
for every $l$.
It is easy to check that $\mathcal J_l (X, \Delta)$ is well-defined, that is,
it does not depend on the resolution $f:Y\to X$ (cf.~Lemma \ref{lem-b}).
We note that
$\mathcal J_l(X,\Delta)\subset \cO_X$ when
$\Delta$ is effective.

\begin{lem}\label{lem-new}
Assume that $\Delta$ is effective. Then we have
$$
\mathcal J_{l} (X, \Delta)|_U=\mathcal J'_l (X, \Delta)|_U
$$ for
every $l$,
where $U=X\setminus W$ and $W$ is the union of all the lc centers of $(X, \Delta)$.
\end{lem}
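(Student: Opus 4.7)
The containment $\mJ_l(X, \Delta) \subseteq \mJ'_l(X, \Delta)$ is immediate from the definitions in \ref{81}, so I only need the reverse inclusion on $U$. The plan is to verify this stalk-by-stalk at a point $x \in U$: take an arbitrary germ $g \in (\mJ'_l(X, \Delta))_x$, represented by a regular function on a sufficiently small open neighborhood $V$ of $x$, and show $g \in (\mJ_l(X, \Delta))_x$. Fix a common log resolution $f \colon Y \to X$ with $K_Y + \Delta_Y = f^*(K_X + \Delta)$ and $\Supp \Delta_Y$ simple normal crossing. Then $g$ satisfies $\Div f^*g + D' \geq 0$ on $f^{-1}(V)$, where $D' = -\llcorner \Delta_Y \lrcorner + \Delta_Y^{=1} + \sum_{k \geq 2-l} {}^k\!\Delta_Y$. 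Since the coefficient of $D'$ along every prime component of $\Delta_Y^{=1}$ is zero, membership of $g$ in $(\mJ_l)_x$ reduces to showing $\operatorname{ord}_\nu f^*g \geq 1$ for every prime component $\nu$ of $\Delta_Y^{=1}$ meeting $f^{-1}(V)$; after shrinking $V$ we may further assume every such $\nu$ satisfies $x \in c_X(\nu)$.

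The argument will rest on two observations. First, a structural constraint: any such $\nu$ has $c_X(\nu) \subseteq \Nlc(X, \Delta)$. Indeed $a(\nu, X, \Delta) = -1$, and if $c_X(\nu)$ were not contained in $\Nlc$ then $c_X(\nu)$ would be an lc center by definition, placing $x \in c_X(\nu) \subseteq W$ and contradicting $x \in U$. Second, every section of $\mJ'_l(X, \Delta)$ vanishes set-theoretically on $\Nlc(X, \Delta)$. This is the local reading of the preceding lemma ``$(X, \Delta)$ is lc if and only if $\mJ'_l(X, \Delta) = \cO_X$'': applied to any open $V_0 \subseteq X$ it gives $\mJ'_l|_{V_0} = \cO_{V_0}$ precisely when $(V_0, \Delta|_{V_0})$ is lc, so at each $y \in \Nlc$ the stalk $(\mJ'_l)_y$ must be a proper ideal of $\cO_{X, y}$, hence contained in $\m_y$.

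Combining these, $g$ vanishes set-theoretically on $V \cap \Nlc$, and in particular on $c_X(\nu) \cap V$. Pulling back by $f$, the regular function $f^*g$ vanishes on the non-empty open subset $\nu \cap f^{-1}(V)$ of the irreducible divisor $\nu$; by irreducibility, $f^*g$ vanishes identically on $\nu$, so $\operatorname{ord}_\nu f^*g \geq 1$. Hence $\Div f^*g + (D' - \Delta_Y^{=1}) \geq 0$ on $f^{-1}(V)$, giving $g \in (\mJ_l)_x$, which completes the plan. I do not anticipate a serious obstacle: the argument deliberately sidesteps any cohomological approach via Theorem \ref{ap1} (which would at best control the associated primes of the quotient $\mJ'_l/\mJ_l$) and instead exploits the pointwise description of $\mJ'_l$ on the non-lc locus.
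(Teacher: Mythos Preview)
Your argument is correct and follows essentially the same approach as the paper's proof, which is a three-line sketch: shrink so that $U=X$, observe that then $f(\Delta_Y^{=1})\subset\Nlc(X,\Delta)$, and conclude. You have simply filled in the details the paper leaves implicit, working stalkwise rather than globally shrinking, and making explicit why $f(\Delta_Y^{=1})\subset\Nlc(X,\Delta)$ forces sections of $\mathcal J'_l$ to already lie in $\mathcal J_l$ (namely, because such sections vanish on $\Nlc$, their pullbacks vanish along each component of $\Delta_Y^{=1}$).
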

\begin{proof}
By shrinking $X$, we can assume that $U=X$.
In this case, $f(\Delta^{=1}_Y)\subset \Nlc (X, \Delta)$.
Therefore, we see that $\mathcal J_l(X, \Delta)=\mathcal J'_l(X, \Delta)$
for every $l$.
\end{proof}
\begin{lem}
Let $X$ be a normal variety and $\Delta$ an effective $\mathbb R$-divisor
on $X$ such that $K_X+\Delta$ is $\mathbb R$-Cartier. Then,
for every $l$, $(X, \Delta)$ is kawamata log terminal if and only if
$\mathcal J_l (X, \Delta)=\cO_X$.
\end{lem}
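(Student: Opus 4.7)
The plan is to use the inclusions $\mathcal J(X, \Delta) \subset \mathcal J_l(X, \Delta) \subset \mathcal J_0(X, \Delta)$ recorded in \ref{81}, together with the classical identification of klt pairs via the multiplier ideal, to reduce each direction of the equivalence to a short computation from the definition of $\mathcal J_l$.

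For the ``if'' direction, suppose $(X, \Delta)$ is klt. Then for any log resolution $f: Y \to X$ every coefficient of $\Delta_Y$ is strictly less than $1$, so $\Delta_Y^{=k} = 0$ for every $k \geq 1$ and the correction term $\sum_{k=2-l}^{\infty} {}^k\!\Delta_Y$ vanishes. The defining expression therefore collapses to $\mathcal J_l(X, \Delta) = f_*\cO_Y(-\llcorner \Delta_Y \lrcorner) = \mathcal J(X, \Delta)$. Strict-transform components of $\Delta_Y$ carry coefficient in $[0,1)$ and so round down to $0$, while exceptional components have coefficient $< 1$ and round down to an integer $\leq 0$; hence $-\llcorner \Delta_Y \lrcorner$ is effective and $f$-exceptional, and its pushforward equals $\cO_X$.

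For the ``only if'' direction I argue the contrapositive: assume $(X, \Delta)$ is not klt and pick a prime divisor $E$ over $X$ with $a(E, X, \Delta) \leq -1$. By the resolution-independence of $\mathcal J_0$ (the same argument as in Lemma \ref{lem-b}), the defining resolution $f: Y \to X$ may be chosen so that $E$ appears on $Y$, in which case the coefficient $a$ of $E$ in $\Delta_Y$ satisfies $a \geq 1$. Set $D_0 := -\llcorner \Delta_Y \lrcorner + \sum_{k=2}^{\infty} {}^k\!\Delta_Y$. Splitting into cases according to whether $a=1$, $a$ is a non-integer greater than $1$, or $a$ is an integer $\geq 2$, the coefficient of $E$ in $D_0$ equals $-1$, $-\llcorner a \lrcorner \leq -1$, or $-(a-1) \leq -1$, respectively. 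Consequently $D_0$ fails to be effective on any neighborhood of a point $x \in f(E)$, so the constant section $1$ is not in $(f_*\cO_Y(D_0))_x$, giving $\mathcal J_0(X, \Delta) \neq \cO_X$. The inclusion $\mathcal J_l(X, \Delta) \subset \mathcal J_0(X, \Delta)$ then forces $\mathcal J_l(X, \Delta) \neq \cO_X$, completing the contrapositive.

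The only step requiring care is the three-case computation above; it is routine arithmetic and so presents no genuine obstacle. In particular, the key arithmetic fact is that the ``$+1$'' contributed by ${}^k\!\Delta_Y$ can never compensate a floor $\llcorner a \lrcorner \geq 1$ once $a \geq 1$, so the correction term built into $\mathcal J_0$ is incapable of salvaging a non-klt valuation.
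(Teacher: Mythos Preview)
Your argument is correct. The paper itself does not give a proof of this lemma; it states it immediately after defining $\mathcal J_l$ and treats it as obvious from the definition, in parallel with the analogous unproved lemmas for $\mathcal J_{NLC}$ and $\mathcal J'_l$. Your write-up supplies exactly the routine verification the authors are suppressing: in the klt direction the correction term $\sum_{k\geq 2-l}{}^k\!\Delta_Y$ vanishes and one is left with the multiplier ideal, which equals $\cO_X$; in the non-klt direction you correctly check that any divisor $E$ with coefficient $a\geq 1$ in $\Delta_Y$ acquires coefficient $\leq -1$ in the divisor defining $\mathcal J_0$, so $1\notin\mathcal J_0(X,\Delta)$ near $f(E)$, and the inclusion $\mathcal J_l\subset\mathcal J_0$ finishes it.

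Two very minor remarks. First, your labeling of the two directions is reversed: in ``klt if and only if $\mathcal J_l=\cO_X$'', the ``if'' direction is $\mathcal J_l=\cO_X\Rightarrow$ klt, not the other way around. Second, the sentence ``$D_0$ fails to be effective on any neighborhood \ldots\ so the constant section $1$ is not in $(f_*\cO_Y(D_0))_x$'' is the right idea, but the precise statement you are using is that $1\in(f_*\cO_Y(D_0))_x$ would force $D_0\geq 0$ on $f^{-1}(U)$ for some neighborhood $U\ni x$, which is impossible since $E$ meets every such $f^{-1}(U)$. You might state this explicitly. Neither point affects the validity of the proof.
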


We obtain the following vanishing theorem without any difficulties
as an application of Theorem \ref{ap1} (2).

\begin{thm}[Vanishing theorem]\label{833}
Let $X$ be a normal variety and $\Delta$
an $\mathbb R$-divisor on $X$
such that $K_X+\Delta$ is $\mathbb R$-Cartier.
Let $\pi:X\to S$ be a projective
morphism onto an algebraic variety $S$ and
$L$ a Cartier divisor on $X$.
Assume that $L-(K_X+\Delta)$ is $\pi$-ample.
Then we have
$$
R^i\pi_*(\mathcal J_l(X, \Delta)\otimes \cO_X(L))=0
$$
for all $i>0$ and every $l$.
\end{thm}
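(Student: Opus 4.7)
The plan is to follow exactly the strategy used in the proofs of Theorem \ref{vani2} and Theorem \ref{vani3}, namely rewrite $\mathcal{J}_l(X,\Delta)\otimes \cO_X(L)$ as $f_*\cO_Y(\tilde L)$ on a log resolution, verify that $\tilde L - (K_Y+B)$ is the $f$-pullback of a $\pi$-ample divisor for a boundary $B$ with simple normal crossing support, and invoke Ambro's vanishing theorem (Theorem \ref{ap1} (2)). The only thing that really changes compared to Theorem \ref{vani3} is the absence of the $\Delta_Y^{=1}$ summand, and we must carefully check that this absence does not destroy the boundary condition on $B$.

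Concretely, fix $f:Y\to X$ with $K_Y+\Delta_Y=f^*(K_X+\Delta)$ and $\Supp\Delta_Y$ simple normal crossing. Set
$$A=\ulcorner -(\Delta_Y^{<1})\urcorner,\qquad N=\llcorner \Delta_Y^{>1}\lrcorner,\qquad G_l=\sum_{k=2-l}^{\infty}{}^k\!\Delta_Y,$$
so that $-\llcorner\Delta_Y\lrcorner+G_l=A-\Delta_Y^{=1}-N+G_l$. Put
$$\tilde L=A-N-\Delta_Y^{=1}+G_l+f^*L\qquad\text{and}\qquad B=\{\Delta_Y^{<1}\}+G_l+\{\Delta_Y^{>1}\}.$$
A direct unwinding gives $\tilde L-(K_Y+B)=f^*(L-(K_X+\Delta))$. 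The crucial point is that $B$ is a boundary with simple normal crossing support: a prime component $E_i$ of $\Delta_Y$ with coefficient $a_i$ contributes $\{a_i\}\in[0,1)$ when $a_i<1$; it contributes $0$ when $a_i=1$ (note that $1<2-l$ since $l\leq 0$, so $E_i\notin G_l$); and when $a_i>1$ it contributes $\{a_i\}$ plus $1$ exactly when $a_i$ is an integer $\geq 2-l$, hence a total multiplicity in $[0,1]$. Thus $B$ is a boundary with SNC support.

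Now Theorem \ref{ap1} (2) applied to $f:Y\to X$, $\pi:X\to S$ and the Cartier divisor $\tilde L$ gives $R^p\pi_*R^qf_*\cO_Y(\tilde L)=0$ for every $p>0$ and $q\geq 0$. Taking $q=0$ and using the projection formula, $f_*\cO_Y(\tilde L)\cong \mathcal{J}_l(X,\Delta)\otimes \cO_X(L)$, which yields the desired vanishing for every $l\in\{0,-1,\dots,-\infty\}$. The one subtle point—really the only obstacle—is the verification in Step~2 that the absence of the $\Delta_Y^{=1}$ term does not spoil the boundary condition on $B$; once $2-l\geq 2$ is observed this is immediate, and the rest is the now-standard application of the cohomological package of Section \ref{sec4}.
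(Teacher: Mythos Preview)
Your proof is correct and follows exactly the approach indicated by the paper, which simply defers to the proof of Theorem~\ref{vani2}; you have written out the details with the appropriate modification (dropping the $\Delta_Y^{=1}$ term in the Cartier divisor and in the boundary $B$). The only point requiring care is indeed the verification that $B=\{\Delta_Y\}+G_l$ remains a boundary, and your case analysis using $2-l\geq 2$ handles this correctly.
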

\begin{proof}
See the proof of Theorem \ref{vani2}.
\end{proof}

\begin{rem}
When $\Delta$ is effective in Theorem \ref{833},
the assumption that $L-(K_X+\Delta)$ is $\pi$-ample
can be replaced by the following weaker assumption:~$\pi$ is
only proper,
$L-(K_X+\Delta)$ is $\pi$-nef and $\pi$-big, and
$(L-(K_X+\Delta))|_{\Nlc (X, \Delta)}$ is $\pi$-ample
(cf.~\cite[Theorem 2.47]{fuji-book} and Remark \ref{rem24}).
It is well known that it is sufficient to assume
$L-(K_X+\Delta)$ is $\pi$-nef and $\pi$-big for
$l=-\infty$.
It is nothing but the Kawamata--Viehweg--Nadel vanishing theorem.
\end{rem}
\end{say}

\begin{say}We close this section with the following very important remark.
\begin{rem}\label{977}
Let $D$ be an effective
$\mathbb Q$-divisor
on a smooth variety $X$.
Definition 9.3.9 in \cite{lazarsfeld} says that
the pair $(X, D)$ is log canonical if $\mathcal
J(X, (1-\varepsilon)D)=\cO_X$ for all $0<\varepsilon <1$.

Again note that $\mathcal J'(X, D)|_U$ does not always coincide with
$\mathcal J(X, D)|_U$, where
$U=X\setminus W$ and
$W$ is the union of all the lc centers of $(X, D)$.
This may be a desirable property in certain circumstances, and so
the scheme structure on $\Nlc (X, D)$ induced by
$\mathcal J'(X, D)=\mathcal J(X, (1-\varepsilon)D)$ for
$0<\varepsilon \ll 1$ may be less suitable in some applications than the scheme structure induced by $\mathcal J_{NLC}(X, D)$.
\end{rem}
\end{say}

\section{Differents}\label{sec-diff}
Let us recall the definition and the basic
properties of Shokurov's {\em{differents}}
following \cite[\S 3]{shokurov} and
\cite[9.2.1]{ambro}. See also \cite[Section 14]{fundamental}.

\begin{say}[Differents]
Let $X$ be a normal variety and
$S+B$ an $\mathbb R$-divisor on $X$ such that
$K_X+S+B$ is $\mathbb R$-Cartier.
Assume that
$S$ is reduced and that
$S$ and $B$ have no common irreducible
components.
Let $f:Y\to X$ be a resolution
such that
$$
K_Y+S_Y+B_Y=f^*(K_X+S+B)
$$
and that
$\mathrm{Supp}(S_Y+B_Y)$ is simple normal crossing and $S_Y$ is
smooth, where
$S_Y$ is the strict transform of $S$ on $Y$.
Let $\nu:S^{\nu}\to S$ be the normalization.
Then $f:S_Y\to S$ can be decomposed as
$$
f:S_Y\overset{\pi}\longrightarrow S^{\nu}\overset{\nu}\longrightarrow S.
$$
We define $B_{S_Y}=B_Y|_{S_Y}$. Then
we obtain
$$
(K_Y+S_Y+B_Y)|_{S_Y}=K_{S_Y}+B_{S_Y}
$$
by adjunction. We put $B_{S^{\nu}}=\pi_*B_{S_Y}$.
Then we obtain that
$$
K_{S^{\nu}}+B_{S^{\nu}}=\nu^*(K_X+S+B).
$$
The $\mathbb R$-divisor $B_{S^{\nu}}$ on $S^{\nu}$ is called
the {\em{different}} of $(X, S+B)$ on $S^{\nu}$. We can easily
check that $B_{S^\nu}$ is independent of the resolution
$f:Y\to X$. So,
$B_{S^\nu}$ is a well-defined $\mathbb R$-divisor on $S^{\nu}$.
We can check the following properties.
\begin{itemize}
\item[(i)] $K_{S^\nu}+B_{S^\nu}$ is
$\mathbb R$-Cartier and $K_{S^{\nu}}+B_{S^\nu}=\nu^*(K_X+S+B)$.
\item[(ii)] If $B$ is a $\mathbb Q$-divisor, then
so is $B_{S^{\nu}}$.
\item[(iii)] $B_{S^\nu}$ is effective if $B$ is
effective in a neighborhood of $S$.
\item[(iv)] $(S^\nu, B_{S^\nu})$ is log canonical
if $(X, S+B)$ is log canonical
in a neighborhood of $S$.
\item[(v)] Let $D$ be an $\mathbb R$-Cartier divisor
on $X$ such that
$S$ and $D$ have no common irreducible components. Then
we have
$$
(B+D)_{S^\nu}=B_{S^\nu}+\nu^*D.
$$
We sometimes write $D|_{S^{\nu}}=\nu^*D$ for simplicity.
\end{itemize}
The properties except (iii) follow directly from the definition.
We give a proof of (iii) for the reader's convenience.

\begin{proof}[Proof of {\em{(iii)}}]
By shrinking $X$, we can assume that
$X$ is quasi-projective and
$B$ is effective.
By taking hyperplane cuts, we can also assume
that
$X$ is a surface.
Run the log minimal model program
over $X$ with respect to
$K_Y+S_Y$. Let $C$ be a curve
on $Y$ such that
$(K_Y+S_Y)\cdot C<0$ and $f(C)$ is a
point.
Then $K_Y\cdot C<0$ because $S_Y$ is the strict transform
of $S$. Therefore,
each step of the log minimal model
program over
$X$ with
respect to $K_Y+S_Y$ is the contraction of a $(-1)$-curve $E$
with $(K_Y+S_Y)\cdot E<0$.
So, by replacing $(Y, S_Y)$ with
the output of
the above log minimal model program,
we can assume that $Y$ is smooth,
$(Y, S_Y)$ is plt,
and $K_Y+S_Y$ is $f$-nef. We
note that
$S_Y$ is a smooth
curve since $(Y, S_Y)$ is plt.
By the negativity lemma and
the assumption
that $B$ is effective,
$B_Y$ is effective.
We note the following equality
$$
-B_Y=K_Y+S_Y-f^*(K_X+S+B).
$$
By adjunction, we obtain
$$
(K_Y+S_Y+B_Y)|_{S_Y}=K_{S_Y}+B_Y|_{S_Y}.
$$
It is obvious that $B_Y|_{S_Y}$ is effective.
This implies that $B_{S^{\nu}}=B_Y|_{S_Y}$ is effective.
\end{proof}
When $X$ is singular, $B_{S^\nu}$ is not necessarily zero
even if $B=0$.
\end{say}
\begin{say}[Inversion of adjunction]
Let us recall Kawakita's inversion of adjunction on log canonicity
(see \cite{kawakita}).

\begin{thm}\label{kawa-thm}
Let $X$ be a normal variety, $S$ a reduced divisor on $X$, and $B$
an effective $\mathbb R$-divisor
on $X$ such that
$K_X+S+B$ is $\mathbb R$-Cartier.
Assume that $S$ has no common irreducible component with the
support of $B$.
Let $\nu: S^{\nu}\to S$ be the normalization and
$B_{S^{\nu}}$ the different on $S^{\nu}$ such that
$K_{S^{\nu}}+B_{S^{\nu}}=\nu^*(K_X+S+B)$.
Then $(X, S+B)$ is log canonical
in a neighborhood of $S$ if and only if
$(S^{\nu}, B_{S^{\nu}})$ is log canonical.
\end{thm}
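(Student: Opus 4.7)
The forward direction --- $(X, S+B)$ lc near $S$ implies $(S^\nu, B_{S^\nu})$ lc --- is precisely property (iv) of the differents established above. My plan for the converse, which is the real content of Kawakita's result, is by contraposition: assuming $(X, S+B)$ fails to be lc in every neighborhood of $S$, I aim to produce a prime divisor over $S^\nu$ with discrepancy $<-1$ with respect to $(S^\nu, B_{S^\nu})$.

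First, take a log resolution $f:Y\to X$ with $K_Y+S_Y+B_Y=f^*(K_X+S+B)$, with $\Supp(S_Y+B_Y)$ simple normal crossing and $S_Y$ smooth, so that $f|_{S_Y}$ factors as $\nu\circ\pi$ and $B_{S_Y}:=B_Y|_{S_Y}$ satisfies $\pi_*B_{S_Y}=B_{S^\nu}$. Write $B_Y=\sum b_iD_i$; by the contrapositive hypothesis some $D_i$ has $b_i>1$ with $f(D_i)\cap S\ne\emptyset$. In the straightforward case that some such $D_i$ meets $S_Y$ transversally, the component $D_i\cap S_Y$ appears in $B_{S_Y}$ with coefficient $b_i>1$, and pushing down by $\pi$ we obtain a divisor on a model over $S^\nu$ with discrepancy $<-1$ against $(S^\nu, B_{S^\nu})$.

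The main obstacle is the delicate case in which every $D_i$ with $b_i>1$ is disjoint from $S_Y$ while still mapping into $S$. To handle this, the plan is to invoke Shokurov--Koll\'ar connectedness for $f$: since $-(K_Y+S_Y+B_Y)=-f^*(K_X+S+B)$ is $f$-numerically trivial, a suitable small rational perturbation of the coefficients of $B_Y$ produces a boundary $\Delta$ with $-(K_Y+\Delta)$ $f$-nef and $f$-big, so that $\Supp\llcorner\Delta\lrcorner$ has connected fibers over $X$. Connectedness then forces components of $B_Y^{\geq 1}$ to be chained to $S_Y$ through components of $B_Y^{=1}$, and by successively blowing up codimension-two strata along such a chain one can transfer a coefficient $>1$ onto a divisor meeting $S_Y$, falling back to the straightforward case above.

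The hard part is precisely this propagation along the chain: one must ensure that the blow-ups introduced do not create cancellations in the different, so that the divisor with coefficient $>1$ actually survives on the normalization $S^\nu$. A clean way around this is to first reduce to the $\mathbb Q$-divisor case by perturbing $B$ (using that the lc property is closed on the rational directions in the space of boundaries) and then appeal directly to Kawakita's original argument \cite{kawakita}, which combines extended Kawamata--Viehweg vanishing with an induction on dimension controlling the minimal log discrepancies along $S$; this delivers the required divisor over $S^\nu$ without an explicit chain analysis.
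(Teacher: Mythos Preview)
The paper does not supply its own proof of this statement: Theorem~\ref{kawa-thm} is simply recalled as Kawakita's theorem and attributed to \cite{kawakita}, with only the easy forward direction noted (property~(iv) of differents). So there is no in-paper argument to compare against.

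Your proposal, however, is not a self-contained proof either. You correctly isolate the forward direction and begin a plausible strategy for the converse via Shokurov--Koll\'ar connectedness, but you then explicitly acknowledge that the propagation step along the chain of coefficient-$\geq 1$ components is the ``hard part'' and resolve it by writing ``appeal directly to Kawakita's original argument \cite{kawakita}.'' That is circular: you are invoking the very result you set out to prove. Everything preceding that sentence is only a heuristic outline, and the genuine content---why a non-lc valuation on $X$ meeting $S$ forces a non-lc valuation on $S^\nu$---is deferred to the reference.

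There is also a mathematical wrinkle in the connectedness sketch itself. The standard connectedness lemma controls the non-klt locus ($B_Y^{\geq 1}$), not the non-lc locus ($B_Y^{>1}$). Knowing that $S_Y$ is connected to some $D_i$ with $b_i>1$ through a chain of components with coefficient exactly $1$ does not, by blowing up strata, automatically produce a divisor of coefficient $>1$ meeting $S_Y$: blowing up the intersection of two coefficient-$1$ components creates a new exceptional divisor of coefficient exactly $1$, so the ``$>1$'' information does not propagate along such chains without additional input. This is precisely why Kawakita's actual proof does not proceed by naive connectedness but instead goes through a comparison of ideal sheaves using relative vanishing. If you want an honest proof here, you must either reproduce that argument or give a complete account of the chain-propagation that avoids the obstruction just described; as written, the proposal does neither.
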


By adjunction, it is obvious that $(S^{\nu}, B_{S^{\nu}})$ is log canonical
if $(X, S+B)$ is log canonical
in a neighborhood of
$S$. It is the property (iv) above.
So, the above theorem is usually called the
{\em{inversion of adjunction on log canonicity}}.
We used Theorem \ref{kawa-thm} in the proof of the restriction theorem
for $\mathcal J_{NLC}$:~Theorem \ref{main} (see \cite{fujino-non}).
\end{say}

\section{Restriction theorems}\label{sec3}
In this section, we consider the restriction theorem for $\mathcal J'$.
First, let us recall the restriction theorem for $\mathcal J_{NLC}$.
It is the main theorem of \cite{fujino-non}.

\begin{thm}[Restriction theorem]\label{main}
Let $X$ be a normal variety and $S+B$ an effective $\mathbb R$-divisor
on $X$ such that $S$ is reduced and normal and that $S$ and $B$ have no
common irreducible components.
Assume that $K_X+S+B$ is $\mathbb R$-Cartier.
Let $B_S$ be the different on $S$ such that
$K_S+B_S=(K_X+S+B)|_S$.
Then we obtain
$$
\mathcal J_{NLC}(S, B_S)= \mathcal J_{NLC}(X, S+B)|_S.
$$
In particular, $(S, B_S)$ is log canonical if and only if
$(X, S+B)$ is log canonical in a neighborhood of $S$.
\end{thm}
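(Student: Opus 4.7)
The plan is to descend to a common log resolution $f\colon Y \to X$ with $K_Y + B_Y = f^*(K_X + S + B)$ and simple normal crossing support, chosen so that the strict transform $S_Y$ of $S$ is smooth and meets the remaining components of $B_Y$ transversely. Writing $L = -\lfloor B_Y\rfloor + B_Y^{=1}$, Definition~\ref{def61} gives $\mathcal{J}_{NLC}(X, S+B) = f_*\mathcal{O}_Y(L)$, and adjunction exhibits $g := f|_{S_Y}\colon S_Y \to S$ as a log resolution of $(S, B_S)$ with $K_{S_Y} + B_{S_Y} = g^*(K_S + B_S)$, where $B_{S_Y} = (B_Y - S_Y)|_{S_Y}$; transversality then yields $L|_{S_Y} = -\lfloor B_{S_Y}\rfloor + B_{S_Y}^{=1}$ and hence $\mathcal{J}_{NLC}(S, B_S) = g_*\mathcal{O}_{S_Y}(L|_{S_Y})$.

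I would then push the short exact sequence
\[
0 \to \mathcal{O}_Y(L - S_Y) \to \mathcal{O}_Y(L) \to \mathcal{O}_{S_Y}(L|_{S_Y}) \to 0
\]
forward by $f$. The resulting map $\rho\colon \mathcal{J}_{NLC}(X, S+B) \to \mathcal{J}_{NLC}(S, B_S)$ is, after identifying both sides with subsheaves of $\mathcal{O}_X$ and $\mathcal{O}_S$ respectively (using normality of $X$ and $S$), the genuine restriction-to-$S$ map, so its image is exactly $\mathcal{J}_{NLC}(X, S+B)|_S$. The theorem therefore reduces to showing that the connecting homomorphism $\partial\colon \mathcal{J}_{NLC}(S, B_S) \to R^1 f_*\mathcal{O}_Y(L - S_Y)$ is the zero map.

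To obtain $\partial = 0$ the plan is to apply Theorem~\ref{ap1}(1). Put $\Delta := (B_Y^{=1} - S_Y) + \{B_Y\}$; this is a boundary $\mathbb{R}$-divisor with simple normal crossing support that crucially excludes $S_Y$. A direct computation using $K_Y + B_Y = f^*(K_X + S + B)$ gives
\[
(L - S_Y) - (K_Y + \Delta) \;=\; -f^*(K_X + S + B),
\]
which is pulled back from $X$ and hence $f$-semi-ample. Theorem~\ref{ap1}(1) then forces every associated prime of $R^1 f_*\mathcal{O}_Y(L - S_Y)$ to be the generic point of the $f$-image of some stratum of $(Y, \Delta)$. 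The main obstacle is to check that $\eta_S$ is \emph{not} such a prime: since $X$ is normal and hence smooth at the codimension-one point $\eta_S$, any $f$-exceptional divisor has image avoiding a neighborhood of $\eta_S$; any non-exceptional component of $B_Y^{=1} - S_Y$ is the strict transform of a coefficient-one component of $B$ and thus maps to a divisor distinct from $S$, by the hypothesis that $S$ and $B$ share no components; and intersection strata have images of dimension strictly less than $\dim S$. Since $\mathcal{J}_{NLC}(S, B_S)$ is a nonzero ideal on the integral scheme $S$ (so that $\eta_S$ is its sole associated prime), any nonzero image of $\partial$ would force $\eta_S$ into the associated primes of the target---contradicting the previous sentence. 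Hence $\partial = 0$ and the ideal equality follows; the final ``in particular'' statement then combines this with Kawakita's inversion of adjunction (Theorem~\ref{kawa-thm}).
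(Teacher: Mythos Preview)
The paper itself does not supply a proof of Theorem~\ref{main}; it quotes the result from \cite{fujino-non}, and immediately after Theorem~\ref{kawa-thm} it records that Kawakita's inversion of adjunction is \emph{used in} that proof.  This alone should make you suspicious of your argument: the ``in particular'' clause of Theorem~\ref{main} \emph{is} Kawakita's theorem, and it follows formally from the ideal equality together with Lemma~5.3.  So if your proof of the equality really avoided Kawakita, you would have produced a new proof of inversion of adjunction on log canonicity---and your closing appeal to Theorem~\ref{kawa-thm} for the ``in particular'' would be entirely superfluous.

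The gap is in the sentence ``any nonzero image of $\partial$ would force $\eta_S$ into the associated primes of the target.''  The source $\mathcal J_{NLC}(S,B_S)$ does have $\eta_S$ as its unique associated prime, but $\operatorname{im}(\partial)$ is a \emph{quotient} of this sheaf, not a subsheaf, and quotients freely acquire new associated primes.  A toy model: with $S=\mathbb A^1$, the evaluation map $\mathcal O_S\to k(0)$ is a nonzero map from a sheaf whose only associated prime is $\eta_S$ to a sheaf whose only associated prime is the origin.  In your situation, Theorem~\ref{ap1}(1) tells you that the associated primes of $R^1f_*\mathcal O_Y(L-S_Y)$ are generic points of $f$-images of strata of $(Y,\Delta)$ with $\Delta=(B_Y^{=1}-S_Y)+\{B_Y\}$.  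You correctly exclude $\eta_S$ from this list, but nothing excludes strata whose images are proper subvarieties of $S$: an $f$-exceptional component $E$ of $B_Y^{=1}$ with $f(E)\subset S$ gives exactly such a stratum.  Thus $\partial$ may land nontrivially in the part of $R^1f_*\mathcal O_Y(L-S_Y)$ supported on $f(E)$.  What your argument actually establishes is only that $\partial$ vanishes at the generic point of $S$---which is automatic, since $f$ is an isomorphism there---and this is far from $\partial=0$.  It is precisely at this point that the proof in \cite{fujino-non} brings in Kawakita's theorem to control the centers lying inside $S$ and close the reverse inclusion.
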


There is a natural question on $\mathcal J'$.

\begin{que}\label{112}
Let $X$ be a normal variety and $S+B$ an effective $\mathbb R$-divisor
on $X$ such that $S$ is reduced and normal and that $S$ and $B$ have no
common irreducible components.
Assume that $K_X+S+B$ is $\mathbb R$-Cartier.
Let $B_S$ be the different on $S$ such that
$K_S+B_S=(K_X+S+B)|_S$.
Is the following equality
$$
\mathcal J'(S, B_S)= \mathcal J'(X, S+B)|_S.
$$
true?
\end{que}

In this section, we will give partial answers to Question \ref{112}.

\begin{say}
We prove the restriction theorem for $\mathcal J'$ under the assumption that $X$ is smooth and
$B=0$. The following theorem is contained in the main theorem from the next section, but the proof in this special case is sufficiently simple that we reproduce it here.

\begin{thm}
Let $X$ be a smooth variety and $S$ a reduced normal divisor
on $X$.
Then we have
$$
0\to \cO_X(-S)\to \mathcal J'(X, S)\to
\mathcal J'(S, 0)\to 0.
$$
In particular, we obtain
$$
\mathcal J'(X, S)|_S=\mathcal J'(S, 0)
$$
We note that $K_S=(K_X+S)|_S$ by adjunction.
\end{thm}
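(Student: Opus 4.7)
The plan is to pass to a log resolution and push forward an appropriate short exact sequence. Let $f:Y\to X$ be a log resolution of $(X,S)$, let $T:=f^{-1}_*S$ (smooth) be the strict transform, and set $g:=f|_T:T\to S$, which is a resolution of the normal variety $S$. Writing $K_Y+T+E=f^*(K_X+S)$ with $E=\sum a_iE_i$ supported on the $f$-exceptional divisors, the fact that $X$ is smooth and $S$ reduced forces all $a_i\in\mathbb Z$. Adjunction gives $K_T+\Delta_T=g^*K_S$ with $\Delta_T:=E|_T$, so Definition~\ref{def-a} simplifies (since $\{\Delta_Y\}=0$) to
$$\mathcal J'(X,S)=f_*\cO_Y(G),\qquad G:=-E+\sum_{a_i\geq 1}E_i,$$
and similarly $\mathcal J'(S,0)=g_*\cO_T(G|_T)$, where $G|_T=-\Delta_T+\sum_{a_i\geq 1}(E_i|_T)$.

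I would push forward the short exact sequence
$$0\to\cO_Y(G-T)\to\cO_Y(G)\to\cO_T(G|_T)\to 0$$
along $f$ and identify it with the claimed sequence. The middle term is $\mathcal J'(X,S)$ by definition and the right term is $\iota_*\mathcal J'(S,0)$ (where $\iota:S\hookrightarrow X$) by the identification above. For the kernel, substituting $-T=-f^*S+\sum m_iE_i$ (with $m_i$ the multiplicity of $f^*S$ along $E_i$) and using $m_i-a_i=b_i\geq 1$, where $b_i$ is the discrepancy of $K_Y$ over $K_X$ (positive since $X$ is smooth), yields $G-T=-f^*S+\Sigma$ for an effective $f$-exceptional divisor $\Sigma:=\sum(b_i+\mathbf{1}_{a_i\geq 1})E_i$. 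The projection formula combined with $f_*\cO_Y(\Sigma)=\cO_X$ (valid by normality of $X$ for any effective $f$-exceptional $\Sigma$) yields $f_*\cO_Y(G-T)=\cO_X(-S)$.

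The remaining and main step is to show the pushed-forward sequence is exact on the right, equivalently that the connecting homomorphism $\delta:\iota_*\mathcal J'(S,0)\to R^1f_*\cO_Y(G-T)$ vanishes. I plan to deduce this from Ambro--Fujino's vanishing theorem (Theorem~\ref{ap1}(2)) via the formal trick of taking the theorem's ``$f$'' to be $\mathrm{id}_Y$ and its ``$\pi$'' to be our $f:Y\to X$; this converts the conclusion into $R^pf_*\cO_Y(L)=0$ for $p>0$, under the hypothesis $L-(K_Y+B)\sim_{\mathbb R}H$ with $B$ a boundary of SNC support on $Y$ and $H$ an $f$-ample $\mathbb R$-divisor on $Y$. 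With $L=G-T$ and the first-guess choice $B_0=\sum_{a_i\geq 1}E_i$, direct computation gives
$$L-(K_Y+B_0)=-f^*(K_X+S),$$
which is only $f$-trivial rather than $f$-ample. To upgrade this, I would perturb by a small multiple $\varepsilon A$ of an $f$-ample $\mathbb R$-divisor $A$ supported on $\Exc(f)$ (such $A$ exists since $f$ is projective birational), setting $H:=-f^*(K_X+S)+\varepsilon A$ and $B:=B_0-\varepsilon A$.

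The delicate part of the argument, which I expect to be the main obstacle, will be arranging that $B$ remains a boundary with SNC support: concretely one needs $A$ to have non-negative coefficients on the $E_i$ with $a_i\geq 1$ and non-positive coefficients on the remaining exceptionals, a sign constraint that is not satisfied by a generic $f$-ample divisor. This can plausibly be repaired by correcting a generic $f$-ample divisor with a suitable $f$-numerically trivial combination of exceptionals, or by allowing $T$ to enter $B$ with a small positive coefficient and folding the corresponding $f^*$-term of $-T$ into $H$; either adjustment preserves $f$-ampleness of $H$ for small $\varepsilon$. Once $R^1f_*\cO_Y(G-T)=0$ is established, the long exact sequence collapses to the three-term exact sequence in the statement, and restricting to $S$ gives $\mathcal J'(X,S)|_S=\mathcal J'(S,0)$.
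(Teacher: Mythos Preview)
Your setup is right and matches the paper: push forward the restriction sequence on a log resolution, identify the kernel with $\cO_X(-S)$ via the projection formula, and reduce everything to the vanishing of $R^1f_*\cO_Y(G-T)$. The gap is precisely there, and the perturbation you propose cannot close it.

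By the negativity lemma, any $f$-ample $\mathbb{R}$-divisor $A$ supported on $\Exc(f)$ has strictly negative coefficient along every exceptional prime. Hence $B=B_0-\varepsilon A$ has coefficient $1+\varepsilon|d_i|>1$ on each $E_i$ with $a_i\geq 1$, so it is not a boundary. Your first repair fails because an $f$-numerically trivial exceptional $\mathbb{R}$-divisor is zero (apply negativity to both $D$ and $-D$), so there is nothing to correct with. Your second repair (adding $\delta T$ to $B$) does not touch the exceptional coefficients that already exceed $1$. More conceptually: writing $L-(K_Y+B)=(B_0-B)+f^*(\cdots)$, $f$-ampleness forces $B_0-B$ to be $f$-nef with $f_*(B_0-B)\leq 0$, whence $B\geq B_0$ by negativity; combined with the boundary condition $B\leq 1$, this pins the coefficient of $B$ at each component of $B_0$ to exactly $1$, and then $B_0-B\leq 0$ would have to be $f$-ample, which you have no mechanism to arrange.

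The paper sidesteps this entirely. It takes a \emph{specific} resolution---a composition of blow-ups along smooth centers having simple normal crossings with the running divisor, with centers of codimension $\geq 3$ when not contained in the divisor (this is where normality of $S$ is used)---and works with the \emph{full} reduced exceptional divisor $E$ rather than your $B_0$. The required vanishing $R^1f_*\cO_Y(K_Y+E)=0$ is then proved blow-up by blow-up (their Proposition~\ref{vani-lemlem}) via Grauert--Riemenschneider, Reid--Fukuda, and relative Kodaira vanishing, not via Theorem~\ref{ap1}. A second proof there goes through Du~Bois singularities and Grothendieck duality. Either route works; the perturbation into Theorem~\ref{ap1}(2) does not.
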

\begin{proof}
Let $f:Y\to X$ be a resolution of $S$ such that
$f$ is an isomorphism outside the
singular locus of $S$.
We put $E=\Exc (f)$.
We can assume that
$f$ is a composition
of blow-ups.
Each step can be assumed to be the blow-up described in
Proposition \ref{vani-lemlem} below.
We note that the codimension
of the singular locus of $S$ in $X$ is $\geq 3$ since
$S$ is normal.
Therefore, we have
$R^1f_*\cO_Y(K_Y+E)=0$ by
Proposition \ref{vani-lemlem}
and the Leray spectral sequence.
We consider the following
short exact sequence
$$
0\to \cO_Y(K_Y+E)\to \cO_Y(K_Y+S_Y+E)\to
\cO_{S_Y}(K_{S_Y}+E|_{S_Y})\to 0,
$$
where $S_Y=f^{-1}_*S$. By
taking $\otimes \cO_Y(-f^*(K_X+S))$ and
applying $f_*$,
we obtain
$$
0\to \cO_X(-S)\to \mathcal J'(X, S)\to
\mathcal J'(S, 0)\to 0.
$$
In particular, we have
$$
\mathcal J'(X, S)|_S=\mathcal J'(S, 0).
$$
We finish the proof.
\end{proof}

\begin{prop}[Vanishing lemma]\label{vani-lemlem}
Let $V$ be a smooth variety and $D$ a simple normal crossing
divisor on $V$. Let $f:W\to V$ be the blow-up along
$C$ such that
$C$ is smooth, irreducible, and
has simple normal crossings with $D$. We put
$F=f^{-1}_*D+E$, where
$E$ is the exceptional divisor of $f$.
We further assume that, if $C \not\subset D$, then the codimension of $C$ is $\geq 3$.
We then obtain
$$
f_*\cO_W(K_W+F)\simeq \cO_V(K_V+D)
$$
and
$$
R^1f_*\cO_W(K_W+F)=0.
$$
Note that $W$ is smooth and $F$ is a simple normal crossing divisor
on $W$.
\end{prop}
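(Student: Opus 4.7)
The plan is to express $K_W+F$ in the form $f^\ast(K_V+D)+aE$ for some integer $a\geq 0$, invoke the projection formula to reduce both assertions to computing $f_\ast\cO_W(aE)$ and $R^1f_\ast\cO_W(aE)$, and then run an induction on the twist using the short exact sequences $0\to \cO_W((i-1)E)\to \cO_W(iE)\to \cO_E(-i)\to 0$. The key point is that the codimension hypothesis in the case $C\not\subset D$ is exactly what is needed to push the induction all the way up to $a=r$.

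Concretely, set $r=\operatorname{codim}_V C$ and let $k$ be the number of irreducible components of $D$ that contain $C$ (so $k=0$ precisely when $C\not\subset D$, and $k\geq 1$ otherwise). The standard formula for a blow-up of a smooth center gives $K_W=f^\ast K_V+(r-1)E$, while the simple normal crossings hypothesis on $C$ and $D$ implies $f^\ast D=f^{-1}_\ast D+kE$. Combining these,
\[
K_W+F=K_W+f^{-1}_\ast D+E=f^\ast(K_V+D)+(r-k)E.
\]
When $C\subset D$ we have $a:=r-k\in[0,r-1]$, whereas when $C\not\subset D$ the hypothesis forces $a=r\geq 3$. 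By the projection formula,
\[
R^if_\ast\cO_W(K_W+F)\simeq \cO_V(K_V+D)\otimes R^if_\ast\cO_W(aE),
\]
so it suffices to check $f_\ast\cO_W(aE)\simeq\cO_V$ and $R^1f_\ast\cO_W(aE)=0$.

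For the latter, I would induct on $i$ using the short exact sequences above (using the standard identification $\cO_W(E)|_E=\cO_E(-1)$ coming from the fact that $E\to C$ is the projectivization of the conormal bundle, so it is a $\mathbb{P}^{r-1}$-bundle). The base case $i=0$ reads $f_\ast\cO_W=\cO_V$ and $R^1f_\ast\cO_W=0$, the standard vanishing for a blow-up of a smooth center inside a smooth variety. For $i\geq 1$, on the fibers $\mathbb{P}^{r-1}$ one has $H^0(\mathbb{P}^{r-1},\cO(-i))=0$, while $H^1(\mathbb{P}^{r-1},\cO(-i))=0$ whenever $r\geq 3$ (because $1$ is then a strictly intermediate cohomological degree) or whenever $r=2$ and $i=1$. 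Feeding these vanishings and the inductive hypothesis into the long exact sequence obtained by applying $Rf_\ast$ to the short exact sequence yields $f_\ast\cO_W(iE)\simeq\cO_V$ and $R^1f_\ast\cO_W(iE)=0$ at each stage.

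The main obstacle is the second case, where $a$ coincides with the codimension $r$, and one must run the induction all the way to $i=r$. This is precisely where the assumption $r\geq 3$ is used: it guarantees that the fibers of $f|_E$ are projective spaces of dimension $\geq 2$, so that $H^1$ of every line bundle on them vanishes, allowing every step of the induction up to $i=r$ to close. In the other case, the bound $a\leq r-1$ prevents any trouble even when $r=2$, since one never needs to know the vanishing of $H^1(\mathbb{P}^1,\cO(-i))$ for $i\geq 2$.
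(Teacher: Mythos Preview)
Your argument is correct and quite clean. You write $K_W+F=f^*(K_V+D)+aE$ with $a=r-k$ and then peel off one copy of $E$ at a time via $0\to\cO_W((i-1)E)\to\cO_W(iE)\to\cO_E(-i)\to 0$, using only the cohomology of line bundles on $\mathbb{P}^{r-1}$. This is a genuinely different route from either of the paper's two proofs. The paper's first proof instead uses the exact sequence
\[
0\to\cO_W(K_W+f^{-1}_*D)\to\cO_W(K_W+F)\to\cO_E(K_E+f^{-1}_*D|_E)\to 0
\]
and then invokes named vanishing theorems (Grauert--Riemenschneider or Reid--Fukuda for the first term, relative Kodaira on the $\mathbb{P}^m$-bundle for the second); the second proof goes through Du~Bois singularities and Grothendieck duality. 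Your approach has the advantage of being entirely self-contained---no vanishing theorems beyond the cohomology of $\cO_{\mathbb{P}^n}(d)$---and it makes the role of the codimension hypothesis transparent: it is exactly what forces $H^1$ on the fibers to vanish at every step up to $i=a$. The paper's first proof, by contrast, packages the vanishing more conceptually (the first term is ``log-canonical-type'' and the second is an adjoint bundle on a projective bundle), which might generalize more readily to non-blowup birational morphisms, but at the cost of importing heavier machinery.
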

We provide two proofs, the first relies on more standard methods while the second relies on the theory of Du~Bois singularities.
\begin{proof}[Proof \#1]
First, we can easily check that
$$
K_W+F=f^*(K_V+D)+G,
$$
where $G$ is an effective $f$-exceptional
Cartier divisor.
Therefore, we obtain
$$
f_*\cO_W(K_W+F)\simeq \cO_V(K_V+D).
$$

Next, we consider the following
short exact sequence
$$
0\to \cO_W(K_W+f^{-1}_*D)\to \cO_W(K_W+F)\to
\cO_E(K_E+f^{-1}_*D|_E)\to 0.
$$
It is sufficient to prove
$$
R^1f_*\cO_W(K_W+f^{-1}_*D)=R^1f_*\cO_E(K_E+f^{-1}_*D|_E)=0.
$$
Assume that
$C\not\subset D$. Then
$f^{-1}_*D=f^*D$. In this case, $R^1f_*\cO_W(K_W+f^*D)=0$ by the
Grauert--Riemenschneider vanishing theorem and
$R^1f_*\cO_E(K_E+f^*D|_E)=0$ since
$f:E\to C$ is a $\mathbb P^m$-bundle
with $m\geq 2$.
Here, we used the assumption that
the codimension of $C$ in $V$ is $\geq 3$.
So, we can assume that $C\subset D$.
In this case, $R^1f_*\cO_W(K_W+f^{-1}_*D)=0$ by the
vanishing theorem of Reid--Fukuda type (cf.~\cite[Lemma]{fukuda}). 
On the other hand, $R^1f_*\cO_E(K_E+f^{-1}_*D|_E)=0$ by
the relative Kodaira vanishing theorem. We note
that $\pi=f|_E:E\to C$ is a $\mathbb P^m$-bundle
for some $m\geq 1$ and $\pi(f^{-1}_*D)=C$. Thus,
$f^{-1}_*D|_E$ is $\pi$-ample.

We have now proved that $R^1f_*\cO_W(K_W+F)=0$.
\end{proof}

\begin{proof}[Proof \#2]
 Note that the map $f$ is a log resolution of the scheme $D \cup C$.
 The scheme $D \cup C$ is in simple normal crossings so it
 has Du~Bois singularities.
 It follows that $R f_* \cO_F \qis \cO_{D \cup C}$ by \cite{SchwedeEasyCharacterization}.
 Thus Grothendieck duality implies that
\begin{equation}
\label{eqnDuBoisImpliesIso}
R f_* \omega_F^{\mydot} \qis \omega_{D \cup C}^{\mydot}.
\end{equation}
We can map the isomorphism from Equation (\ref{eqnDuBoisImpliesIso})
into the isomorphism $R f_* \omega_W^{\mydot} \qis \omega_V^{\mydot}$,
and then, from the resulting exact triangle,
obtain $R f_* \cO_W(K_W + F)[\dim W] \qis R f_* \omega_W^{\mydot}(F)
\qis R \sHom_W^{\mydot}(\I_{D \cup C}, \omega_V^{\mydot})$.

We assume that $C \not\subseteq D$
(as the other case is even easier).
The dualizing complex of $D \cup C$ has zero cohomology for $i$
between the degrees $-\dim D$ and $-\dim C$.  To see this, simply take cohomology and form the long exact sequence from the triangle
\[
\xymatrix{
\omega_{D \cap C}^{\mydot} \ar[r] & \omega_D^{\mydot} \oplus \omega_C^{\mydot} \ar[r] & \omega_{D \cup C}^{\mydot} \ar[r]^-{+1} &
}
\]
noting that $\myH^i( \omega_Z^{\mydot}) = 0$ for all $i < -\dim Z$.
Thus
$$0 \cong R^{-\dim D + 1} f_* \omega_F^{\mydot}
\cong R^{-\dim D} f_* \cO_W(K_W + F)[\dim W] = R^{1} f_* \cO_W(K_W + F)$$
and we have proven the vanishing.
For the isomorphism, simply notice that
\begin{align*}
 f_* \cO_W(K_W + F) &\cong \myH^{-\dim V}
 \left(R \sHom_V^{\mydot}(\I_{D \cup C}, \omega_V^{\mydot})\right)\\
 &\cong \sHom_W(\I_{D \cup C}, \omega_V)\\
 &\cong \cO_V(K_V + D)
\end{align*}
since the last two sheaves are reflexive and agree outside of $C$.
\end{proof}

\end{say}

\begin{say}
We prove the restriction theorem for $\mathcal J'$
on the assumption that $X$ has only mild singularities.
Theorem \ref{re-th} is an easy corollary of Theorem \ref{main} and is not
covered by the main result of the next section.

\begin{thm}\label{re-th}
Let $X$ be a normal variety and $S+B$ an effective
$\mathbb R$-divisor on $X$ such that
$S$ is reduced and normal
and that $S$ and $B$ have no common irreducible components.
Assume that
$B=B_1+B_2$ such that both
$B_1$ and $B_2$ are effective
$\mathbb R$-divisors around $S$,
$(X, S+B_1)$ is log canonical in a neighborhood of
$S$.
Then we obtain
$$
\mathcal J'(X, S+B)|_S=\mathcal J'(S, B_S),
$$
where $B_S$ is the different on $S$ such that
$(K_X+S+B)|_S=K_S+B_S$.
\end{thm}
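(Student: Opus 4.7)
The plan is to reduce Theorem~\ref{re-th} to the restriction theorem for the minimal non-lc ideal (Theorem~\ref{main}) by rewriting each side as a $\mathcal J_{NLC}$ of a slightly perturbed pair. Since $(X,S+B_1)$ is log canonical in a neighborhood of $S$, Kawakita's inversion of adjunction (Theorem~\ref{kawa-thm}) and property~(iv) of the different give that $(S,(B_1)_S)$ is log canonical on $S$. Because $K_X+S+B$ and $K_X+S+B_1$ are both $\mathbb{R}$-Cartier, their difference $B_2 = B-B_1$ is an $\mathbb{R}$-Cartier divisor, and property~(v) of the different yields $B_S = (B_1)_S + B_2|_S$.

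The key step is the following pair of identities, valid for all sufficiently small $\varepsilon>0$:
\begin{equation}\label{eq-propX}
\mathcal J'(X, S+B) = \mathcal J_{NLC}\bigl(X, S+B_1+(1-\varepsilon)B_2\bigr) \qquad \text{near } S,
\end{equation}
\begin{equation}\label{eq-propS}
\mathcal J'(S, B_S) = \mathcal J_{NLC}\bigl(S, (B_1)_S+(1-\varepsilon)B_2|_S\bigr).
\end{equation}
For \eqref{eq-propX}, take a common log resolution $f\colon Y\to X$ of $(X,S+B_1+B_2)$ and write $K_Y+\Delta_Y = f^*(K_X+S+B)$ and $K_Y+\Delta_{1,Y} = f^*(K_X+S+B_1)$, so that $\Delta_Y = \Delta_{1,Y}+f^*B_2$. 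The lc hypothesis on $(X,S+B_1)$ forces $\Delta_{1,Y}$ to have all coefficients $\leq 1$ near $f^{-1}(S)$, so every prime divisor $D$ on $Y$ with $d_D := \mathrm{coef}(\Delta_Y,D)>1$ lies in $\Supp f^*B_2$. Comparing the coefficient formulas
\[
-\llcorner\Delta_Y\lrcorner + \sum_{k\geq 1}{}^k\!\Delta_Y \quad(\text{defining }\mathcal J')\qquad\text{and}\qquad -\llcorner\Delta_Y-\varepsilon f^*B_2\lrcorner+(\Delta_Y-\varepsilon f^*B_2)^{=1}\quad(\text{defining }\mathcal J_{NLC})
\]
by a case analysis on $d_D$ (distinguishing $d_D<1$, $d_D=1$, $d_D$ an integer $>1$, and $d_D$ non-integer $>1$) shows that the two formulas agree on every non-$f$-exceptional divisor and differ only by an effective $f$-exceptional divisor; the latter is absorbed by $f_*$ in the same manner as in the proof of Lemma~\ref{lem-a}. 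Identity~\eqref{eq-propS} is obtained by the identical argument applied on a log resolution of $(S,B_S)$, using that $(S,(B_1)_S)$ is log canonical.

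Granting the two identities, apply Theorem~\ref{main} to the pair $(X, S+B_1+(1-\varepsilon)B_2)$: its hypotheses hold because $S$ is normal, $B_1+(1-\varepsilon)B_2$ is an effective $\mathbb{R}$-divisor with no irreducible component in common with $S$, and $K_X+S+B_1+(1-\varepsilon)B_2 = (K_X+S+B)-\varepsilon B_2$ is $\mathbb{R}$-Cartier. The different of this pair on $S$ is $(B_1)_S+(1-\varepsilon)B_2|_S$, so Theorem~\ref{main} gives
\[
\mathcal J_{NLC}\bigl(X, S+B_1+(1-\varepsilon)B_2\bigr)\bigr|_S = \mathcal J_{NLC}\bigl(S, (B_1)_S+(1-\varepsilon)B_2|_S\bigr).
\]
Combining this with \eqref{eq-propX} and \eqref{eq-propS} gives the desired equality $\mathcal J'(X, S+B)|_S = \mathcal J'(S, B_S)$.

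The main obstacle is the coefficient-matching establishing \eqref{eq-propX} and \eqref{eq-propS}. The combinatorial crux is that the defining formula for $\mathcal J'$ adds the reduced support of a component of $\Delta_Y$ only when its coefficient is an \emph{integer} $\geq 1$, while non-integer coefficients $>1$ contribute only through the round-down. A small downward perturbation by $\varepsilon f^*B_2$ shifts each integer coefficient $k\geq 1$ into the interval $(k-1,k)$, changing $-\llcorner\cdot\lrcorner$ by exactly $+1$ and thereby reproducing the $\mathcal J'$ correction on integer-coefficient components while leaving all other coefficients essentially undisturbed. Because $(X,S+B_1)$ is lc near $S$, every divisor with coefficient $>1$ in $\Delta_Y$ is automatically contained in $\Supp f^*B_2$, so this single perturbation by $B_2$ handles every problematic component simultaneously.
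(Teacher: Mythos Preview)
Your proof is correct and follows exactly the paper's approach: the identities \eqref{eq-propX} and \eqref{eq-propS} are precisely the content of Lemma~\ref{oi}, and the concluding chain of equalities is the paper's proof of Theorem~\ref{re-th} verbatim. You supply more detail than the paper does for Lemma~\ref{oi} (where the paper simply says the identity is ``almost obvious'' from the definitions), and your coefficient case analysis is sound. One small sharpening: the phrase ``absorbed by $f_*$ in the same manner as in Lemma~\ref{lem-a}'' is slightly imprecise, since Lemma~\ref{lem-a} compares two resolutions computing the \emph{same} invariant, whereas here you compare two \emph{different} invariants on one resolution. The actual reason the effective exceptional discrepancy $E$ in $A'=A+E$ is harmless is that on each problematic component $D$ (your subcase $a+b\in\Z_{\le 0}$, $b>0$) the coefficient of $A$ is $-(a+b)\ge 0$; since both ideals sit inside $\cO_X$, every section $g$ has $\mathrm{ord}_D(g)\ge 0$, so the condition at $D$ is vacuous for both $A$ and $A'$.
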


\begin{lem}\label{oi}
With the same notation and assumptions
as in {\em{Theorem \ref{re-th}}},
we have
$$
\mathcal J'(X, S+B)=\mathcal J_{NLC}(X, S+B_1+(1-\varepsilon)B_2)
$$
in a neighborhood of $S$ for $0<\varepsilon \ll 1$, and
$$
\mathcal J'(S, B_S)=
\mathcal J'(S, {B_1}_S+{B_2}|_S)=\mathcal J_{NLC}(S, {B_1}_S+
(1-\varepsilon)B_2|_S)
$$
for $0<\varepsilon \ll 1$.
\end{lem}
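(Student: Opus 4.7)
The plan is to verify both equalities by comparing coefficients on a common log resolution. After shrinking $X$ we may assume $(X,S+B_1)$ is log canonical globally and that $B_1, B_2$ are effective. Pick a log resolution $f\colon Y\to X$ of $(X,S+B_1+B_2)$ and write $K_Y+\Delta_Y=f^*(K_X+S+B)$ and $K_Y+\Delta^{(1)}_Y=f^*(K_X+S+B_1)$, so that $\Delta_Y=\Delta^{(1)}_Y+f^*B_2$. For each prime divisor $E\subset Y$, denote its coefficients in $\Delta^{(1)}_Y$, $f^*B_2$, $\Delta_Y$ by $a_1$, $b$, and $a=a_1+b$ respectively. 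The lc assumption on $(X,S+B_1)$ forces $a_1\le 1$ everywhere, and effectivity gives $b\ge 0$.

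For the first equality, the coefficient of $E$ in the divisor defining $\mathcal{J}'(X,S+B)$ is $-\llcorner a\lrcorner$, plus $1$ if $a$ is a positive integer. The coefficient in the divisor defining $\mathcal{J}_{NLC}(X,S+B_1+(1-\varepsilon)B_2)$ is $-\llcorner a-\varepsilon b\lrcorner$, plus $1$ if $a-\varepsilon b=1$. I would check equality case by case: (i) if $b=0$, then $a=a_1\le 1$ and both expressions coincide; (ii) if $b>0$ and $a\notin\mathbb{Z}_{\ge 1}$, then for $\varepsilon$ small the floor is unchanged and $a-\varepsilon b\ne 1$; (iii) if $b>0$ and $a=k\in\mathbb{Z}_{\ge 1}$, then for small $\varepsilon$ the perturbed value $a-\varepsilon b$ lies in the open interval $(k-1,k)$, so both sides return $-(k-1)$. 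Since only finitely many exceptional divisors $E$ dominate a fixed neighborhood of $S$, a single $\varepsilon$ works uniformly, and the first equality follows upon pushing forward.

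For the second equality, I would first invoke property (v) of differents from Section~\ref{sec-diff}: since the lc hypothesis makes $K_X+S+B_1$ an $\mathbb{R}$-Cartier divisor, $B_2=(K_X+S+B)-(K_X+S+B_1)$ is $\mathbb{R}$-Cartier and the different satisfies $B_S=B_{1,S}+B_2|_S$. This immediately yields $\mathcal{J}'(S,B_S)=\mathcal{J}'(S,B_{1,S}+B_2|_S)$. Property (iv), or equivalently Kawakita's Theorem~\ref{kawa-thm}, ensures $(S,B_{1,S})$ is log canonical. Applying the coefficient analysis above on $S$, with the reduced part vacant and the lc boundary taken to be $B_{1,S}$, then delivers the second equality.

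The main obstacle is really just the bookkeeping in case (iii); the geometric point making everything work is that the lc hypothesis places every coefficient $a\ge 1$ of $\Delta_Y$ entirely at the expense of $b=\operatorname{coeff}_E(f^*B_2)>0$, so the perturbation $-\varepsilon f^*B_2$ shifts each integer coefficient strictly below the next integer without producing any new coefficient equal to $1$, and the two formulas for the ideal sheaf line up exactly.
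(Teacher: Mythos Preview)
Your approach is essentially the paper's own (the paper merely says ``almost obvious'' and leaves the details to the reader), so the strategy is exactly right.  There is, however, a small oversight in your case~(ii).  You assert that for $b>0$ and $a\notin\mathbb Z_{\ge 1}$ the floor $\llcorner a-\varepsilon b\lrcorner$ is unchanged; this fails precisely when $a\in\mathbb Z_{\le 0}$, in which case $\llcorner a-\varepsilon b\lrcorner=a-1$ rather than $a$.  So on such a component $E$ the coefficient in the $\mathcal J_{NLC}$-divisor is $-a+1$, while in the $\mathcal J'$-divisor it is $-a$, and the two divisors on $Y$ genuinely differ.

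This is easily repaired.  Since $a\le 0$ and $b>0$, one has $a_1=a-b<0$, so $E$ is $f$-exceptional; moreover both coefficients $-a$ and $-a+1$ are non-negative.  Because $S+B$ and $S+B_1+(1-\varepsilon)B_2$ are effective, both $\mathcal J'(X,S+B)$ and $\mathcal J_{NLC}(X,S+B_1+(1-\varepsilon)B_2)$ are contained in $\cO_X$, and any local section $g\in\cO_X$ satisfies $\mathrm{ord}_E(g)\ge 0\ge a$.  Hence the condition imposed at such $E$ by either divisor is vacuous, and the discrepancy does not affect the pushforward.  With this remark your coefficient analysis goes through and the two ideals agree.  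The same patch applies verbatim to the argument on $S$.
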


\begin{proof}
By shrinking $X$ around $S$, we can assume that
$(X, S+B_1)$ is log canonical and $B_2$ is effective.
By the definitions of $\mathcal J'$ and $\mathcal J_{NLC}$, it is almost
obvious that
$$
\mathcal J'(X, S+B)=\mathcal J_{NLC}(X, S+B_1+(1-\varepsilon)B_2)
$$
for $0<\varepsilon \ll 1$.
By the assumption, $(S, B_{1S})$ is log canonical, where
$B_{1S}$ is the different such that $(K_X+S+B_1)|_S=K_S+B_{1S}$.
Thus, $\mathcal J'(S, B_S)=\mathcal J_{NLC}(S, B_{1S}+(1-\varepsilon)B_2|_S)$ holds for
$0<\varepsilon \ll 1$.
\end{proof}

\begin{proof}[Proof of {\em{Theorem \ref{re-th}}}]
We have the following equalities.
\begin{align*}
\mathcal J'(X, S+B)|_S&=\mathcal J_{NLC}(X, S+B_1+
(1-\varepsilon)B_2)|_S\\
&=\mathcal J_{NLC}(S, {B_1}_S+(1-\varepsilon)B_2|_S)
\\  &=\mathcal J'(S, {B_1}_S+B_2|_S) \\
&=\mathcal J'(S, B_S)
\end{align*}
by Lemma \ref{oi} and Theorem \ref{main}.
\end{proof}
\end{say}

We close this section with the following nontrivial example.

\begin{ex}\label{restrictionex}
Let $X=\mathbb C^2=\Spec \mathbb C[x, y]$,
$S=(x=0)$,
and $C=(y^2=x^3)$. We put
$K_S+C_S=(K_X+S+C)|_S$.
We use the same notation as in Example \ref{intermediateexample}.
Then we have
\begin{align*}
\mathcal{J}_{NLC}(X, S+C)&=\pi_*\mathcal{O}_{X_3}(-2E_1-2E_2-4E_3)\\
                                               &={f_1}_*\mathcal{O}_{X_1}(-2E_1)\\
                                               &=\mathfrak{n}^2, \\
\mathcal{J}'(X, S+C)&=\pi_* \mathcal{O}_{X_3}(-E_1-E_2-3E_3)\\
                                    &=\pi_* \mathcal{O}_{X_3}(-E_1-2E_2-3E_3)\\
                                    &=f_{1*} (f_{2*} \mathcal{O}_{X_2}(-E_2)
                                    \otimes \mathcal{O}_{X_1}(-E_1))\\
                                    &=(x^2, y),
\end{align*}
where $\mathfrak{n}$ is the maximal ideal corresponding to $(0,0) \in X$.
On the other hand, by easy calculations, we obtain
$$
\mathcal J_{NLC}(S, C_S)=\mathfrak m^2, \ \ \ \
\mathcal J'(S, C_S)=\mathfrak m,
$$
where $\mathfrak m$ is the maximal ideal corresponding to $0\in S$.
Hence we can check the both restriction theorems (cf.~Theorem \ref{re-th})
\begin{align*}
\mathcal J_{NLC}(S, C_S)&=\mathcal J_{NLC}(X, S+C)|_S,\\
\mathcal J'(S, C_S)&=\mathcal J'(X, S+C)|_S
\end{align*}
in this case.
\end{ex}

\section{The restriction theorem for complete intersections} \label{KarlSection}

In this section we prove a restriction theorem,
Theorem \ref{thmRestrictionForMaximalNonLC},
for maximal non-lc ideals $\mJ'(X, D)$ in a complete intersection.
It is important to note that we do not use Kawakita's proof
of inversion of adjunction for log canonicity \cite{kawakita}.
We also only use fairly mundane vanishing theorems -- Kawamata--Viehweg
vanishing in the form of local vanishing for multiplier
ideals, see \cite{KawamataVanishing},
\cite{ViehwegVanishingTheorems}, and \cite{lazarsfeld}.

Our method is related to techniques used to study Du~Bois
singularities, and so some of the auxiliary notation we use
draws from this perspective, for an introduction to Du~Bois singularities,
see \cite{KovacsSchwedeDuBoisSurvey}.
We briefly recall why one might expect to use techniques
from Du~Bois singularities to study non-lc ideal sheaves.

Suppose that $X$ is a reduced scheme of finite type
over a field $k$ of characteristic zero.  One can then associate an object
$\DuBois{X}$ in the bounded derived category with coherent
cohomology (this object has its origin in Deligne's mixed Hodge
theory for singular varieties).  This object $\DuBois{X}$ is used to
determine whether or not $X$ has Du~Bois singularities
(recalling that Du~Bois singularities are very closely
related to log canonical singularities, see \cite{KollarKovacsLCImpliesDB}).
Furthermore, in the case that $X$ is
normal and $K_X$ is Cartier, it follows
from \cite{KovacsSchwedeSmithLCImpliesDuBois} that the most
interesting cohomology ($-\dim X$) of the Grothendieck
dual of $\DuBois{X}$ is equal to $\mJ'(X, 0)$.  This suggests that two things:
\begin{itemize}
\item{} $\mJ'(X, 0)$ is natural object from the point of view of Du~Bois singularities (or more generally, from the point of view of the Hodge theory of singular varieties), and
\item{} some of the ideas from Du Bois singularities might be
useful in proving restriction theorems for $\mJ'(X, \Delta)$.
\end{itemize}
We will take advantage of the second idea in this section.

Now we begin our main definitions.

Suppose that $Y$ is a smooth affine variety and $X \subset Y$ is a
reduced closed subscheme with ideal sheaf $\I_X$.
Let $\ba$ be an ideal on $Y$ and $t > 0$ a real number.
Let $\pi : \tld Y \rightarrow Y$ be a log resolution of
$(Y, X, \ba^t)$ and set $\ba \cO_{\tld Y} = \cO_{\tld Y}(-G)$
and $\I_X \cO_{\tld Y} = \cO_{\tld Y}(-\overline X)$.

Consider the following short exact sequence:
\[
0 \rightarrow \cO_{\tld Y}(\lfloor tG - \epsilon \overline X\rfloor)
\rightarrow \cO_{\tld Y}(\lfloor tG \rfloor) \rightarrow M_{X, \ba^t} \rightarrow 0,
\]
where $\epsilon > 0$ is arbitrarily small.  Furthermore, one can replace
$\overline X$ with the reduced pre-image of $X$, and assuming $\epsilon$
was chosen to be sufficiently small, the sequence does not change.

\begin{defn}
We define $\DuBois{X, \ba^t}$ to be $\myR \pi_* M_{X, \ba^t}$.
\end{defn}

\begin{rem}
 The object $\DuBois{X, \ba^t}$ is purely an auxiliary object from the point of
 view of this paper.  However, in the case that $\ba = \cO_Y$,
 it agrees with $\DuBois{X}$, the zeroth graded piece of the Deligne--Du Bois
 complex, see \cite{DuBoisMain}, \cite{EsnaultHodgeTypeOfSubvarieties90}
 and \cite{SchwedeEasyCharacterization}.
\end{rem}

\begin{lem}
The object $\DuBois{X, \ba^t}$ is independent of the
choice of $\pi$ {\em{(}}assuming $\epsilon$
is chosen sufficiently small{\em{)}}.
\end{lem}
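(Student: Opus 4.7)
My plan is to prove independence by the standard domination argument: any two log resolutions are dominated by a third, so it suffices to check invariance under a single birational morphism between log resolutions. Given log resolutions $\pi_1 \colon \tld Y_1 \to Y$ and $\pi_2 \colon \tld Y_2 \to Y$ of $(Y, X, \ba^t)$, Hironaka's theorem produces a common log resolution $\pi \colon \tld Y \to Y$ with projective birational morphisms $\mu_i \colon \tld Y \to \tld Y_i$ satisfying $\pi_i \circ \mu_i = \pi$. Since $R\pi_{*} = R\pi_{i*} \circ R\mu_{i*}$, the problem reduces to establishing a natural quasi-isomorphism $R\mu_* M_{X, \ba^t}^{(\tld Y')} \qis M_{X, \ba^t}^{(\tld Y)}$ whenever $\mu \colon \tld Y' \to \tld Y$ is a projective birational morphism between two log resolutions of $(Y, X, \ba^t)$.

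To prove this, I would apply $R\mu_*$ to the defining short exact sequence on $\tld Y'$ and compare with the defining sequence on $\tld Y$ via the natural pullback maps, producing a morphism of distinguished triangles. Taking cohomology and invoking the five lemma reduces the claim to verifying the two quasi-isomorphisms
\[
R\mu_* \cO_{\tld Y'}(\lfloor tG' \rfloor) \qis \cO_{\tld Y}(\lfloor tG \rfloor), \qquad R\mu_* \cO_{\tld Y'}(\lfloor tG' - \epsilon \overline X' \rfloor) \qis \cO_{\tld Y}(\lfloor tG - \epsilon \overline X \rfloor),
\]
compatibly with the natural inclusions. Since $\mu$ is a morphism of log resolutions, $G' = \mu^* G$ as Cartier divisors; and for $\epsilon$ sufficiently small, the reduced preimages $\overline X'$ and $(\mu^*\overline X)_{\red}$ differ only by $\mu$-exceptional components lying over $X$. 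Thus by the projection formula, each displayed isomorphism is equivalent to showing $R\mu_* \cO_{\tld Y'}(F) \qis \cO_{\tld Y}$ for an effective $\mu$-exceptional integral Cartier divisor of the form $F = \lfloor \mu^* A \rfloor$, where $A$ is a simple normal crossing $\mathbb Q$-divisor on $\tld Y$ whose coefficients lie in $[0, 1)$.

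The main obstacle is the higher vanishing $R^i \mu_* \cO_{\tld Y'}(F) = 0$ for $i > 0$; the case $i = 0$ follows from the $\mu$-exceptionality of $F$ together with normality of $\tld Y$. This is precisely the content of the local vanishing theorem for multiplier ideals (see \cite[Theorem 9.4.1]{lazarsfeld}) applied to $\mu$: the sheaf $\cO_{\tld Y'}(F)$ is the multiplier-ideal type sheaf associated to the fractional divisor $\mu^* A$, and Kawamata-Viehweg forces its higher direct images to vanish. The hypothesis that $\epsilon$ is sufficiently small is essential here: it keeps the fractional coefficients of the ambient $\mathbb Q$-divisor strictly below $1$ on $\tld Y$, so that the round-down commutes with $\mu$-pullback up to a $\mu$-exceptional correction in the expected way. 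Once both displayed quasi-isomorphisms are established, functoriality of the defining short exact sequence ensures that the induced quasi-isomorphism on quotients is the canonical one, proving that $\DuBois{X, \ba^t} = R\pi_* M_{X, \ba^t}$ is independent of the choice of log resolution.
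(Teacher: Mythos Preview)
Your overall architecture---dominate two resolutions by a third, reduce to a single map $\mu:\tld Y'\to\tld Y$, and compare the two defining short exact sequences via the five lemma---is sound and matches the paper's strategy. The reduction to the statement
\[
R\mu_*\cO_{\tld Y'}(\lfloor \mu^*A\rfloor)\qis\cO_{\tld Y},\qquad A\text{ SNC on }\tld Y\text{ with coefficients in }[0,1),
\]
is also correct: since $G'=\mu^*G$ and $\overline X'=\mu^*\overline X$ as Cartier divisors, pulling out the integral part via the projection formula lands you exactly here with $A=\{tG\}$ or $A=\{tG-\epsilon\overline X\}$.

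The gap is in your justification of the higher vanishing. You assert that $\cO_{\tld Y'}(\lfloor\mu^*A\rfloor)$ is ``the multiplier-ideal type sheaf associated to the fractional divisor $\mu^*A$'' and that local vanishing \cite[Theorem 9.4.1]{lazarsfeld} applies directly. It does not: local vanishing gives $R^i\mu_*\cO_{\tld Y'}(K_{\tld Y'/\tld Y}-\lfloor\mu^*A\rfloor)=0$, which has the opposite sign on the relevant divisor. The sheaf $\cO_{\tld Y'}(\lfloor\mu^*A\rfloor)$ is not of Kawamata--Viehweg type for $\mu$ (the divisor $\lfloor\mu^*A\rfloor$ is $\mu$-numerically equivalent to $-\{\mu^*A\}$, not to $K_{\tld Y'}+(\text{nef})+(\text{boundary})$), so a direct appeal fails. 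The vanishing you want is nonetheless true, but it needs one more step: apply Grothendieck duality to convert the question into
\[
R\mu_*\cO_{\tld Y'}(\lceil K_{\tld Y'/\tld Y}-\mu^*A\rceil)\qis\cO_{\tld Y},
\]
which \emph{is} local vanishing for the klt pair $(\tld Y,A)$ together with $\mJ(\tld Y,A)=\cO_{\tld Y}$. This is precisely what the paper does: it dualizes first, twists by $\cO_{\tld Y}(-K_{\tld Y}-\overline X)$, and then recognizes the resulting statement as independence of resolution for the multiplier ideal of $(\tld Y,\, tG+(1-\epsilon)\overline X)$. Insert that duality step and your argument goes through.
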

\begin{proof}
Suppose that $\rho : Y' \rightarrow \tld Y$ is a further log resolution.
Set $\ba \cO_{Y'} = \cO_{Y'}(-G')$ and $\I_X \cO_{Y'} = \cO_{Y'}(-X')$.
It is sufficient to show that
$\myR \rho_* \cO_{Y'}(\lfloor tG' - \epsilon X' \rfloor)
\qis \cO_{\tld Y}(\lfloor tG - \epsilon \overline X\rfloor)$.
Therefore, by Grothendieck duality,
it is sufficient to show that
\[
\myR \rho_* \cO_{Y'}(\lceil K_{Y'} - tG' + \epsilon X' \rceil)
\rightarrow \cO_{\tld Y}(\lceil K_{\tld Y} - tG + \epsilon \overline X\rceil)
\]
is an isomorphism.
Twisting by $\cO_{\tld Y}(-K_{\tld Y} - \overline X)$, it is sufficient to show that
\[
\myR \rho_* \cO_{Y'}(\lceil K_{Y'/\tld Y} - tG' - (1-\epsilon) X' \rceil)
\rightarrow \cO_{\tld Y}(\lceil - tG - (1 -\epsilon) \overline X\rceil)
\]
is an isomorphism.  But this is just the independence of the
choice of resolution for multiplier ideals.
\end{proof}

Instead of computing a full log resolution, it will be convenient
only to compute a log resolution of $(Y, \ba^t)$ which is an embedded resolution
of $X$.  In our next lemma, we show that our auxiliary object $\DuBois{X, \ba^t}$
can be computed via such a resolution.  But first a definition:

\begin{defn}
\label{defnFactorizingResolution}
 Given a closed subvariety $X$ in a smooth variety $Y$, a
 \emph{factorizing embedded resolution} of $X$ in $Y$ 
 is a proper birational morphism $\pi: \tld Y \to Y$  such that 
 \begin{enumerate}
\item $\tld Y$ is smooth and $\pi$ is an isomorphism at every generic point of $X \subseteq Y$, 
\item the exceptional locus $\mathrm{exc}(\pi)$ is a simple normal crossings divisor, 
\item the strict transform $\tld X$ of $X$ in $\tld Y$ is smooth and has simple normal crossings with $\mathrm{exc}(\pi)$, 
\item $\I_X \cO_{\tld Y} = \I_{\tld X} \cO_{\tld Y}(-E)$ where $\I_X$  (resp. $\I_{\tld X}$) is the ideal sheaf of $X$ (resp. $\tld X$) and $E$ is a $\pi$-exceptional divisor on $\tld Y$.  
\end{enumerate}

Such resolutions always exist,
see \cite{BravoEncinasVillmayorSimplified} or \cite{WlodarczykResolution}.
\end{defn}

\begin{lem}
\label{LemmaEmbeddedResolutionIsOk}
Suppose that $X$, $Y$ and $\ba$ are as above
and suppose that no component of $X$ is contained in $V(\ba)$.
Further suppose that $\pi : \tld Y \rightarrow Y$ is a
log resolution of $(Y, \ba^t)$ that is simultaneously a factorizing
embedded resolution of $X$ in $Y$ {\em{(}}and so that the pullbacks of
the various objects we were working with are in simple normal crossings,
see \cite{BravoEncinasVillmayorSimplified} or \cite{WlodarczykResolution}{\em{)}}.
Set $\tld X$ to be the strict transform of $X$ in
$\tld Y$ so that we can write $\I_X \cO_{\tld Y} = \I_{\tld X} \cO_{\tld Y}(-E)$.
Then if we let $M$ be the cokernel of
\[
\cO_{\tld Y}(\lfloor tG - \epsilon E \rfloor) \tensor \I_{\tld X} \rightarrow \cO_{\tld Y}(\lfloor tG \rfloor)
\]
we have that $\myR \pi_* M \qis \DuBois{X, \ba^t}$.
{\em{(}}Note the statement does not change if we replace $E$ by $E_{\red}$.{\em{)}}
\end{lem}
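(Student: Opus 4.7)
The plan is to pass to a common refinement $\rho \colon Y' \to \tld Y$ which is simultaneously a full log resolution of $(Y, X, \ba^t)$, and reduce the claim to computing $R\rho_*$ term by term in the short exact sequence defining $M^{(Y')}$. One can arrange $\rho$ to factor through the blowup of $\tld Y$ along the smooth subvariety $\tld X$, so that $\I_{\tld X}\cO_{Y'} = \cO_{Y'}(-X')$ for a smooth divisor $X'$, and hence $\I_X\cO_{Y'} = \cO_{Y'}(-\overline X)$ with $\overline X = X' + \rho^* E$. Applying the previously established independence of $\DuBois{X, \ba^t}$ from the choice of full log resolution, one has $\DuBois{X, \ba^t}\qis R\pi_*R\rho_*M^{(Y')}$, so it suffices to identify $R\rho_*M^{(Y')} \qis M$.

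To do this, I would apply $R\rho_*$ to the short exact sequence
\[
0 \to \cO_{Y'}(\lfloor t\rho^* G - \epsilon \overline X\rfloor) \to \cO_{Y'}(\lfloor t\rho^* G\rfloor) \to M^{(Y')} \to 0
\]
and match the outer terms with the terms in the statement. For the middle term, standard local vanishing for multiplier ideals (Kawamata--Viehweg vanishing, \cite[\S 9.4]{lazarsfeld}) yields $R\rho_*\cO_{Y'}(\lfloor t\rho^* G\rfloor) \qis \cO_{\tld Y}(\lfloor tG\rfloor)$. For the left term, the hypothesis that no component of $X$ lies in $V(\ba)$ forces $X'$ not to appear in $\Supp \rho^* G$, while snc of $\tld X$ with $E$ (part of the factorizing embedded resolution data) forces $X'$ not to appear in $\Supp \rho^* E$; hence for $0 < \epsilon \ll 1$ one obtains the decomposition
\[
\lfloor t\rho^* G - \epsilon \overline X\rfloor = \lfloor t\rho^* G - \epsilon\rho^* E_{\red}\rfloor - X'.
\]
Combining with $R\rho_*\cO_{Y'}(-X') \qis \I_{\tld X}$ (which holds because $\rho$ factors through a blowup along the smooth center $\tld X$) and a second application of local vanishing, one deduces
\[
R\rho_*\cO_{Y'}(\lfloor t\rho^* G - \epsilon \overline X\rfloor) \qis \cO_{\tld Y}(\lfloor tG - \epsilon E\rfloor)\tensor \I_{\tld X}.
\]

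These two identifications assemble into a morphism from $R\rho_*$ of the short exact sequence on $Y'$ to the defining sequence of $M$ on $\tld Y$; the five lemma (or the long exact sequence of higher direct images) then yields $R\rho_* M^{(Y')} \qis M$. The main obstacle is the second pushdown, where one must simultaneously keep track of the contribution of the strict transform $X'$ (producing the factor $\I_{\tld X}$) and that of the exceptional divisor $\rho^* E$ (producing $\cO_{\tld Y}(\lfloor tG - \epsilon E\rfloor)$). This is precisely where the factorizing property of the embedded resolution $\pi$ is essential: it guarantees that $\tld X$ is smooth and meets $E$ transversally, so that the round-down commutes with the decomposition $\overline X = X' + \rho^* E$ and the projection formula together with local vanishing can be applied cleanly.
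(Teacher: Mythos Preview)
Your proposal is correct and follows essentially the same route as the paper. The paper's version is marginally more streamlined: it takes $\rho$ to be \emph{exactly} the blowup of $\tld Y$ along $\tld X$ (rather than a further refinement), so that $G' = \rho^* G$ and $E' = \rho^* E$ have literally the same components and coefficients as $G$ and $E$; this makes $\lfloor tG' - \epsilon E'\rfloor = \rho^*\lfloor tG - \epsilon E\rfloor$ on the nose, so a single twist by $\cO_{\tld Y}(-\lfloor tG - \epsilon E\rfloor)$ together with the projection formula reduces the left-hand pushforward to the elementary fact $R\rho_*\cO_{Y'}(-X') \qis \I_{\tld X}$, with no appeal to local vanishing needed. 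Your extra generality (allowing $\rho$ to factor through the blowup) is harmless but buys nothing, and your claim that $X'$ avoids $\Supp \rho^*G$ and $\Supp \rho^*E$ is only immediate when $\rho$ is the blowup itself---so you may as well take it to be so.
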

\begin{proof}
We note that by blowing up $\tld X$ (which is smooth),
we can obtain an actual log resolution $\eta : Y' \rightarrow Y$ of $(Y, X, \ba^t)$
as pictured in the diagram below:
\[
\xymatrix{
Y' \ar[rr]^{\rho} \ar[dr]_{\eta} & & \tld Y \ar[dl]^{\pi} \\
& Y & \\
}
\]
Set $\I_{\tld X} \cO_{Y'} = \cO_{Y'}(-X')$, set
$\cO_{Y'}(-E') = \rho^* \cO_{\tld Y}(-E)$ and set
$\ba\cO_{Y'} = \cO_{Y'}(-G') = \rho^* \cO_{\tld Y}(-G)$.
Note that $\rho$ induces a bijection between the components (and coefficients)
of $E$ with those of $E'$ (and also of $G$ with those of $G'$ since no
component of $X$ is contained in $V(\ba)$).
It is sufficient to show that
\[
\myR \rho_* \cO_{Y'}(\lfloor tG' - \epsilon(E' + X') \rfloor)
\qis \cO_{\tld Y}(\lfloor tG - \epsilon E \rfloor) \tensor \I_{\tld X}.
\]
Now twist by $\cO_{\tld Y}(-\lfloor tG - \epsilon E \rfloor)$, it is thus sufficient to show that
\[
\myR \rho_* \cO_{Y'}(\lfloor tG' - \epsilon(E' + X')
\rfloor - \rho^* \lfloor tG - \epsilon E \rfloor)) \qis \I_{\tld X}.
\]
Note that, over each component of $\tld X$, there is exactly one new divisor
created by $\rho$, they are all disjoint, and $\lfloor tG - \epsilon E \rfloor$
does not contain $\I_{\tld X}$
in its support, thus the left side is just $\myR \rho_* \cO_{Y'}(\lfloor -\epsilon X' \rfloor)$
which is isomorphic to $\I_{\tld X}$ since $\tld X$ is smooth.
\end{proof}

In what follows, we use the symbol $\myD$ to denote the
Grothendieck dual of a complex in $D^{b}_{\coherent}$,
for example $\myD(\DuBois{X}) \cong \myR \sHom_Y^{\mydot}(\DuBois{X}, \omega_Y^{\mydot})$.
See \cite{HartshorneResidues}.

We now make a transition in concept.  Instead of simply
looking at $\DuBois{X, \ba^{t}}$, we consider
$\DuBois{X, \ba^{t - \epsilon'}}$.  In particular, viewing
$\epsilon'$ as a very small positive number.  It is straightforward to
verify that $\cO_{\tld Y}(\lfloor (t - \epsilon')G - \epsilon \overline X\rfloor)$
is constant for sufficiently small positive $\epsilon'$ and $\epsilon$,
and therefore so is $\DuBois{X, \ba^{t - \epsilon'}}$.
Furthermore, because we are subtracting (and not adding)
$\epsilon'$ from $t$, we may in fact choose $\epsilon' = \epsilon$ (as
 long as they are both sufficiently small).  Therefore,
 we conflate the two $\epsilon$'s and write $\DuBois{X, \ba^{t - \epsilon}}$ to
 denote this object for $\epsilon=\epsilon'$ sufficiently small.

\begin{prop}
\label{LemmaLeftVanishing}
With the notation above,
$\myH^i \myD\left(\DuBois{X, \ba^{t - \epsilon}}\right) = 0$ for $i < -\dim X$.
\end{prop}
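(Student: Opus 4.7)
The plan is to reduce the vanishing to a standard depth estimate for coherent sheaves on $Y$ whose support lies inside $X$, and then promote this from individual cohomology sheaves to the full complex by a short induction.

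First I would observe that $\DuBois{X, \ba^{t-\epsilon}} = \myR\pi_* M$, where $M = M_{X, \ba^{t-\epsilon}}$ is an honest coherent sheaf on $\tld Y$, so $\DuBois{X, \ba^{t-\epsilon}}$ is a bounded complex whose cohomology sheaves $R^k\pi_* M$ sit in non-negative degrees. Inspecting the defining map of $M$ (in its original form, or in the factorizing form provided by Lemma \ref{LemmaEmbeddedResolutionIsOk}), this map is an isomorphism off $\pi^{-1}(X)\subseteq \tld Y$, so $M$ is set-theoretically supported on $\pi^{-1}(X)$; consequently each $R^k\pi_* M$ is a coherent sheaf on $Y$ with support contained in $X$, i.e., in codimension $c := \dim Y - \dim X$. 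On the smooth variety $Y$, any coherent sheaf $G$ with $\mathrm{Supp}(G)\subseteq X$ satisfies $\myH^j\,\myR\sHom_Y(G, \omega_Y) = 0$ for $j < c$ by the standard Auslander--Buchsbaum / local-duality estimate (the annihilator of $G$ in the regular local ring $\cO_{Y,y}$ has height at least $c$); using $\omega_Y^{\mydot}=\omega_Y[\dim Y]$, this is equivalent to $\myH^i\,\myR\sHom_Y(G, \omega_Y^{\mydot}) = 0$ for all $i < -\dim X$.

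To pass from each cohomology sheaf to the whole complex I would induct on the number of non-zero cohomology sheaves of $\DuBois{X, \ba^{t-\epsilon}}$ using the truncation triangle
\[
\myH^0(\DuBois{X, \ba^{t-\epsilon}}) \to \DuBois{X, \ba^{t-\epsilon}} \to \tau^{\geq 1}\DuBois{X, \ba^{t-\epsilon}}\xrightarrow{+1}.
\]
Applying $\myD(-)$ and chasing the resulting long exact sequence, both outer terms vanish below degree $-\dim X$: the leftmost by the depth estimate applied to the sheaf $\myH^0(\DuBois{X,\ba^{t-\epsilon}})$, and the rightmost by the inductive hypothesis applied to the object $\tau^{\geq 1}\DuBois{X,\ba^{t-\epsilon}}[-1]$, which has strictly fewer non-zero cohomology sheaves and for which the $[-1]$ shift translates the bound by exactly one in the right direction. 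Squeezing in the middle, we obtain $\myH^i\myD(\DuBois{X, \ba^{t-\epsilon}}) = 0$ for $i < -\dim X$.

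The main subtleties are the support statement for $M$ (which is transparent from its defining map) and the careful bookkeeping in the truncation induction; the depth estimate itself is classical and no Kodaira-style input is required. The underlying geometric point is that $M$ only has codimension-one support inside $\tld Y$, but pushing forward by $\pi$ improves the support codimension to $c$ inside $Y$ -- exactly what one needs for the desired vanishing in degrees below $-\dim X$.
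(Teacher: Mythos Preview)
Your truncation induction has a sign error that is fatal, not cosmetic. Dualizing reverses shifts: $\myD(C[-1]) = \myD(C)[1]$. So if $K' := (\tau^{\geq 1}\DuBois{X,\ba^{t-\epsilon}})[1]$ has, by induction, $\myH^{j}\myD(K') = 0$ for $j < -\dim X$, then $\myH^{i}\myD(\tau^{\geq 1}\DuBois{X,\ba^{t-\epsilon}}) = \myH^{i+1}\myD(K')$ vanishes only for $i+1 < -\dim X$, i.e.\ for $i < -\dim X - 1$. You lose exactly one degree at each step, so the shift moves the bound in the \emph{wrong} direction.

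This is not just a bookkeeping slip: the statement you are trying to prove is simply false for an arbitrary bounded complex with cohomology sheaves supported on $X$. Take $Y = \mathbb{A}^2$, $X$ a closed point, and $K = k(X) \oplus k(X)[-1]$. Each summand is supported in codimension $2$, yet $\myD(K) \cong k(X) \oplus k(X)[1]$ has $\myH^{-1}\myD(K) \neq 0$, contradicting the desired vanishing for $i < 0 = -\dim X$. Thus the support/depth estimate on the individual $R^k\pi_* M$ cannot by itself yield the proposition; one would need, e.g., that $R^k\pi_* M$ is supported in codimension $\geq c + k$, and there is no reason to expect this in general.

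The paper's proof exploits the specific geometric origin of $\DuBois{X,\ba^{t-\epsilon}}$ rather than treating it as a black-box complex. It runs induction on $\dim X$: using a factorizing embedded resolution one produces an exact triangle whose outer terms are $\myD(\DuBois{\Sigma_\times,\ba^{t-\epsilon}})$ for a strictly lower-dimensional $\Sigma_\times$ (handled by induction) and $\myR\pi_*\omega_{\tld X}^{\mydot}(\lceil\cdots\rceil)$ pushed forward from the \emph{smooth} strict transform $\tld X$ (which trivially has no cohomology below $-\dim\tld X = -\dim X$). The passage to $\tld X$ via Grothendieck duality is precisely what replaces your failed shift argument.
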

\begin{proof}
We first note that we may assume that no irreducible
component of $X$ is contained in $V(\ba)$.  To
see this, suppose $X = X_1 \cup X_2$ where $X_1$
is the union of those irreducible components of $X$ that are
not contained in $V(\ba)$ and $X_2$ is the union of the remaining components.  Then notice that
\[
\lfloor (t - \epsilon)G - \epsilon \overline X\rfloor = \lfloor (t - \epsilon)G -
\epsilon (\overline {X_1 \cup X_2}) \rfloor = \lfloor (t - \epsilon)G - \epsilon \overline {X_1} \rfloor
\]
since we choose $\epsilon$ arbitrarily small.  It follows
that $\DuBois{X, \ba^{t - \epsilon}} = \DuBois{X_1, \ba^{t - \epsilon}}$.
In particular, note that if $X \subset V(\ba)$, then $\DuBois{X, \ba^{t - \epsilon}} \qis 0$.
Therefore if $\myH^i \myD\left(\DuBois{X_1, \ba^{t - \epsilon}}\right) = 0$ for $i < -\dim X_1$,
then for all $i < -\dim X \leq -\dim X_1$, we have
$\myH^i \myD\left(\DuBois{X, \ba^{t - \epsilon}}\right) = 0$.

We proceed by induction on the dimension of $X$.
If $\dim X = 0$, then $X$ is disjoint from the support
$\ba$ and the result follows from the theory of Du
Bois singularities since $\myH^i \myD (\DuBois{X}) = 0$
for $i < -\dim X$, see \cite[Lemma 3.6]{KovacsSchwedeSmithLCImpliesDuBois}.

For the induction step, define $\Gamma \subset Y$
to be the reduced scheme
\[
\Gamma := (\Sing X) \cup V(\ba),
\]
where $\Sing X$ is the singular locus of $X$.
In particular $\Gamma$ contains $V(\ba)$.
Decompose $\Gamma = \Gamma_{\ba} \cup \Sigma_{\times}$
where $\Gamma_{\ba}$ is the union of components of $\Gamma$
that are contained in $V(\ba)$ and $\Sigma_{\times}$ is the
union of the components of $\Gamma$ that are not contained in $V(\ba)$.

Let $\pi : \tld Y \rightarrow Y$ be an embedded log
resolution of $X$ and log resolution of $(Y, \ba^{t - \epsilon})$
as in Lemma \ref{LemmaEmbeddedResolutionIsOk}.
Set $E_{\ba}$ to be the reduced pre-image of
$\Gamma_{\ba}$ in $\tld Y$, $E_{\times}$ to be
the reduced pre-image of $\Sigma_{\times}$ in
$\tld Y$, $\ba \cO_{\tld Y} = \cO_{\tld Y}(-G)$
and set $\tld X$ to be the strict transform of $X$.
We may assume that $\pi$ is an isomorphism outside of
$V(\ba)$ and $\Sigma_{\times}$.  Now write $\I_X \cO_{\tld Y}
= \I_{\tld X} \cO_{\tld Y}(-E)$ and note that
\begin{equation} \label{eqnEpsilonMeansTweaks}
\lfloor (t - \epsilon)G - \epsilon(E_{\times} + E_{\ba})
\rfloor = \lfloor (t - \epsilon)G - \epsilon E \rfloor  = \lfloor (t - \epsilon)G -
\epsilon E_{\times} \rfloor
\end{equation}
since we pick $\epsilon > 0$ to be arbitrarily small.

Notice we have the following short exact sequence.
\[
\Small
\xymatrix@C=10pt{
 0 \ar[r] & \cO_{\tld Y}(\lfloor (t - \epsilon)G -
 \epsilon E \rfloor) \tensor \I_{\tld X} \ar[r] &  \cO_{\tld Y}
 (\lfloor (t - \epsilon)G - \epsilon E_{\times} \rfloor)  \ar[r]
 & \cO_{\tld X}(\lfloor (t - \epsilon)G - \epsilon E_{\times} \rfloor|_{\tld X}) \ar[r] &  0.
 }
\]

By pushing forward, it follows that there exists the
following commutative diagram of exact triangles:
\[
\Small
\xymatrix@C=10pt@R=14pt{
\myR \pi_* \cO_{\tld X}(\lfloor (t - \epsilon)G -
\epsilon E_{\times} \rfloor|_{\tld X})[-1] \ar[d] \ar[r]&
 0  \ar[d] \ar[r] & \ar[d]  \myR \pi_* \cO_{\tld X}(\lfloor (t - \epsilon)G -
 \epsilon E_{\times} \rfloor|_{\tld X}) \ar[r]^-{+1} & \\
\myR \pi_* \cO_{\tld Y}(\lfloor (t - \epsilon)G - \epsilon E \rfloor) \tensor
\I_{\tld X}  \ar[d] \ar[r] & \myR \pi_* \cO_{\tld Y}(\lfloor (t - \epsilon)G \rfloor)
 \ar[d]^-{\sim} \ar[r] & \DuBois{X, \ba^{t-\epsilon}}\ar[d] \ar[r]^-{+1} & \\
\myR \pi_* \cO_{\tld Y}(\lfloor (t - \epsilon)G - \epsilon E_{\times} \rfloor)
\ar[d]^-{+1} \ar[r] & \ar[r] \ar[d]^-{+1} \myR \pi_* \cO_{\tld Y}
(\lfloor (t - \epsilon)G \rfloor) & \DuBois{\Sigma_{\times},
\ba^{t-\epsilon}} \ar[r]^{+1} \ar[d]^-{+1}& \\
& & &
}
\]
While in general, restricting divisors does not commute
with round-downs, in our case we do have $\lfloor (t - \epsilon)G -
\epsilon E_{\times} \rfloor|_{\tld X} = \lfloor (t - \epsilon)G|_{\tld X}
- \epsilon E_{\times} |_{\tld X} \rfloor$
because the divisors and the object we are restricting
them to are in simple normal crossings.
We dualize the right vertical column and obtain
\[
\xymatrix{
\myD\left(\DuBois{\Sigma_{\times}, \ba^{t-\epsilon}} \right)
\ar[r] & \myD\left(\DuBois{X, \ba^{t-\epsilon}} \right) \ar[r] &
\myR \pi_* \omega_{\tld X}^{\mydot}(\lceil - (t - \epsilon)G|_{\tld X}
+ \epsilon E_{\times}|_{\tld X} \rceil) \ar[r]^-{+1}  &. \\
}
\]
By taking cohomology, and using the inductive
hypothesis on $\myD\left(\DuBois{\Sigma_{\times}, \ba^{t-\epsilon}} \right)$,
we obtain our desired result.
\end{proof}

The following corollary is a key application of what we
have proven so far.  It allows us to relate the auxiliary
objects $\DuBois{X, \ba^{t-\epsilon}}$ with the maximal non-lc ideals $\mJ'(X; \ba^{t})$.

\begin{cor} \label{corDimDuboisIsNonLC}
Assume that $X$ is normal and equidimensional, $\ba$ is
an ideal sheaf on $X$ and that no component of $X$ is
contained inside $V(\ba)$.  Let $\pi' : \tld X \to X$ be a
log resolution, let $F$ be the exceptional divisor of $\pi$ and
set $\ba \cO_{\tld X} = \cO_{\tld X}(-H)$ then
\begin{equation} \label{eqDuBoisIsNonLC}
\myH^{-\dim X} \myD\left(\DuBois{X, \ba^{t - \epsilon}}\right)
= \pi'_* \cO_{\tld X}(\lceil K_{\tld X} - (t - \epsilon)H + \epsilon F \rceil)
\end{equation}
for all sufficiently small $\epsilon$ and \emph{any} embedding
of $X \subseteq Y$ into a smooth variety.  In particular,
$\myH^{-\dim X} \myD\left(\DuBois{X, \ba^{t - \epsilon}}\right)$
is independent of the choice of embedding of $X$ into $Y$.
\end{cor}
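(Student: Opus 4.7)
The plan is to proceed by a direct refinement of the exact triangle produced in the proof of Proposition~\ref{LemmaLeftVanishing}, and to extract cohomology in the single degree $-\dim X$. Fix an embedding $X \subseteq Y$ into a smooth ambient variety and, as in Lemma~\ref{LemmaEmbeddedResolutionIsOk}, choose a simultaneous log resolution $\pi : \tld Y \to Y$ of $(Y, \ba^{t})$ and factorizing embedded resolution of $X$ in $Y$. Let $\tld X \subset \tld Y$ be the strict transform of $X$, set $\pi' := \pi|_{\tld X} : \tld X \to X$, and write $\ba \cO_{\tld Y} = \cO_{\tld Y}(-G)$, so that $H := G|_{\tld X}$ satisfies $\ba \cO_{\tld X} = \cO_{\tld X}(-H)$. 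Then $\pi'$ is itself a log resolution of $(X, \ba)$; let $F$ denote its reduced exceptional divisor.

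Exactly as in the proof of Proposition~\ref{LemmaLeftVanishing}, let $\Sigma_{\times} \subseteq \Sing X$ denote the union of those components of $\Sing X \cup V(\ba)$ not contained in $V(\ba)$, and let $E_{\times}$ be the reduced preimage of $\Sigma_{\times}$ in $\tld Y$. That proof produces the exact triangle
\[
\myD(\DuBois{\Sigma_{\times}, \ba^{t-\epsilon}}) \to \myD(\DuBois{X, \ba^{t-\epsilon}}) \to \myR \pi'_{*} \omega_{\tld X}^{\mydot}\!\left(\lceil -(t-\epsilon) H + \epsilon E_{\times}|_{\tld X} \rceil\right) \xrightarrow{+1}.
\]
Since $X$ is normal and equidimensional, $\dim \Sigma_{\times} \leq \dim \Sing X \leq \dim X - 2$, and Proposition~\ref{LemmaLeftVanishing} applied to $\Sigma_{\times}$ gives $\myH^{i}\myD(\DuBois{\Sigma_{\times}, \ba^{t-\epsilon}}) = 0$ for all $i < -\dim \Sigma_{\times}$, in particular for both $i = -\dim X$ and $i = -\dim X + 1$. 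The long exact cohomology sequence of the triangle, combined with $\omega_{\tld X}^{\mydot} = \omega_{\tld X}[\dim X]$, then yields
\[
\myH^{-\dim X} \myD(\DuBois{X, \ba^{t-\epsilon}}) \;\cong\; \pi'_{*} \cO_{\tld X}\!\left(\lceil K_{\tld X} - (t-\epsilon) H + \epsilon E_{\times}|_{\tld X} \rceil\right).
\]

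It remains to show that $E_{\times}|_{\tld X}$ may be replaced by $F$ inside the ceiling. Every component of $E_{\times}|_{\tld X}$ is $\pi'$-exceptional, so $E_{\times}|_{\tld X} \subseteq F$. Conversely, a prime component $\gamma$ of $F \setminus E_{\times}|_{\tld X}$ is $\pi'$-exceptional with $\pi'(\gamma)$ a component of $\Sing X$ not lying in $\Sigma_{\times}$, hence a component contained in $V(\ba)$; consequently $\gamma \subseteq \mathrm{supp}(H)$ with coefficient $h_{\gamma} > 0$. A short case analysis on whether $t h_{\gamma}$ is an integer shows that $\lceil -t h_{\gamma} + \epsilon h_{\gamma} \rceil = \lceil -t h_{\gamma} + \epsilon(h_{\gamma} + 1) \rceil$ for all sufficiently small $\epsilon > 0$. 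Since the only coefficients differing between $-(t-\epsilon)H + \epsilon E_{\times}|_{\tld X}$ and $-(t-\epsilon)H + \epsilon F$ occur on components $\gamma$ of this sort, it follows that $\lceil K_{\tld X} - (t-\epsilon) H + \epsilon E_{\times}|_{\tld X} \rceil = \lceil K_{\tld X} - (t-\epsilon) H + \epsilon F \rceil$, proving formula~\eqref{eqDuBoisIsNonLC}.

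Independence of embedding is then immediate: the right-hand side of \eqref{eqDuBoisIsNonLC} is computed purely from $X$, $\ba$, and the log resolution $\pi'$ of $X$, while a bookkeeping argument modeled on Lemma~\ref{lem-a} (dominating two log resolutions of $(X, \ba)$ by a common refinement and using effectiveness of the relative canonical divisor) shows that it does not depend on the choice of $\pi'$. The main technical subtlety is ensuring that both boundary terms $\myH^{-\dim X}\myD(\DuBois{\Sigma_{\times}, \ba^{t-\epsilon}})$ and $\myH^{-\dim X + 1}\myD(\DuBois{\Sigma_{\times}, \ba^{t-\epsilon}})$ vanish in the long exact sequence, and it is precisely here that the normality hypothesis on $X$ enters essentially through the inequality $\dim \Sigma_{\times} \leq \dim X - 2$.
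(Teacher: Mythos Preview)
Your proof is correct and follows essentially the same route as the paper: invoke the exact triangle from the proof of Proposition~\ref{LemmaLeftVanishing}, use normality to get $\dim \Sigma_{\times} \leq \dim X - 2$ so that both relevant cohomologies of $\myD(\DuBois{\Sigma_{\times}, \ba^{t-\epsilon}})$ vanish, and then argue that perturbing by $\epsilon E_{\times}|_{\tld X}$ versus $\epsilon F$ does not change the ceiling. The paper's proof is terser but identical in substance, appealing to Equation~(\ref{eqnEpsilonMeansTweaks}) for the last step and to Lemma~\ref{lem-a} for independence of the resolution.

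One small wording issue: you write that for $\gamma$ a component of $F \setminus E_{\times}|_{\tld X}$, the image $\pi'(\gamma)$ is ``a component of $\Sing X$ not lying in $\Sigma_{\times}$''. This is not quite how the argument goes. What you actually need (and what the setup from Proposition~\ref{LemmaLeftVanishing} gives you, since there $\pi$ is chosen to be an isomorphism outside $V(\ba) \cup \Sigma_{\times}$) is simply that $\pi'(\gamma) \subseteq V(\ba|_X) \cup \Sigma_{\times}$; irreducibility of $\pi'(\gamma)$ together with $\pi'(\gamma) \not\subseteq \Sigma_{\times}$ then forces $\pi'(\gamma) \subseteq V(\ba|_X)$, hence $\gamma \subseteq \Supp H$. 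Your conclusion is right, but the justification should go through this containment rather than through ``component of $\Sing X$'', which need not hold.
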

\begin{proof}
Since the right side is independent of the choice of resolution by
Lemma \ref{lem-a}, we assume that $\pi' := \pi|_{\tld X}$ is induced as
in Lemma  \ref{LemmaEmbeddedResolutionIsOk} and furthermore
that $\pi$ is an isomorphism outside of $V(\ba)$ and $\Sing X$.
The result follows immediately from the final exact
triangle used in the proof of  Proposition \ref{LemmaLeftVanishing} when
one notes that $\dim \Sigma_{\times} \leq \dim X - 2$ (since $X$ is normal).
Note that while there may be
components of $F$ and $H$ which coincide, choosing small
enough epsilon allows us to ignore such complication as in Equation (\ref{eqnEpsilonMeansTweaks}).
\end{proof}

In the case that $X$ is a complete intersection,
we will show that $\myH^{i} \myD\left(\DuBois{X, \ba^{t - \epsilon}}\right)
= 0$ for $i > -\dim X$.   First however, we need the following
lemma which will be a key point in an inductive
argument, the proof is similar to that of Proposition \ref{LemmaLeftVanishing}.
\begin{lem}
\label{LemmaHypersurfaceShortExactSequence}
Suppose that $Y$, $\ba$ and $X$ are as above and
suppose that no component of $X$ is contained inside $V(\ba)$.
 Suppose further that $X$ is a codimension 1
 subset of a reduced equidimensional scheme $Z \subseteq Y$
 also such that no component of $Z$ is contained inside $V(\ba)$.
 Let $\pi : \tld Y \rightarrow Y$ be a log resolution of
 $(Y, X \cup (\Sing Z), \ba^t)$ that is simultaneously
 an embedded resolution of $Z$ as
 in {\em{Lemma \ref{LemmaEmbeddedResolutionIsOk}}}.
 Furthermore, we assume that $\pi$ is an isomorphism outside of $(\Sing Z) \cup X \cup V(\ba)$.

Set $\tld Z$ to be the strict transform of $Z$,
set $\ba \cO_{\tld Y} = \cO_{\tld Y}(-G)$, set $\I_X \cO_{\tld Y} = \cO_{\tld Y}(-\overline X)$
and set $E_Z$ to be the divisor on $\tld Y$ such that
$\I_Z \cO_{\tld Y} = \I_{\tld Z} \cO_{\tld Y}(-E_Z)$.
Finally, let $\Sigma$ denote the union of components of
$\Sing Z$ which are not contained in $V(\ba)$.  Then there is an exact triangle
\[
\xymatrix{
\myR \pi_* \cO_{\tld Z}(\lfloor (t - \epsilon)G|_{\tld Z} -
\epsilon (E_Z + \overline X)|_{\tld Z} \rfloor) \ar[r]
 &  \DuBois{Z, \ba^{t-\epsilon}} \ar[r]
 & \DuBois{X \cup \Sigma, \ba^{t-\epsilon}} \ar[r]^-{+1} &.
}
\]
\end{lem}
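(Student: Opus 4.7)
The plan is to adapt the argument of Proposition~\ref{LemmaLeftVanishing}, with $Z$ now playing the role previously played by $X$ and $X \cup \Sigma$ playing the role of $\Sigma_\times$. The target triangle will emerge as the third edge of an octahedron built from three exact triangles on $\tld Y$, the crucial preparatory step being a round-down identity that forces those triangles to share a common middle vertex.

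First I would establish, for $\epsilon > 0$ sufficiently small, the round-down identity
\[
\lfloor (t-\epsilon)G - \epsilon E_Z\rfloor \;=\; \lfloor (t-\epsilon)G - \epsilon\,\overline{X \cup \Sigma}\rfloor \;=\; \lfloor (t-\epsilon)G - \epsilon(E_Z + \overline X)\rfloor,
\]
where $\overline{X \cup \Sigma}$ denotes the reduced pre-image in $\tld Y$ of $X \cup \Sigma$; call this common line bundle $\mathcal{L}$. Every component of $E_Z$ is $\pi$-exceptional and maps into $Z$. Since $\pi$ is an isomorphism outside $(\Sing Z) \cup X \cup V(\ba)$, each such component maps into $\Sigma \cup X \cup (Z \cap V(\ba))$, and hence either appears in the reduced pre-image of $\Sigma$, in $\overline X$, or in the support of $G$. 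As in Equation~\eqref{eqnEpsilonMeansTweaks}, components of the last type are absorbed into the floor for $\epsilon$ sufficiently small, while components of the first two types contribute $-1$; this gives the first equality. The second equality holds because $\overline X$ is already contained in $E_Z^{\red}$.

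With $\mathcal{L}$ in hand, I would tensor the short exact sequence
$0 \to \I_{\tld Z} \to \cO_{\tld Y} \to \cO_{\tld Z} \to 0$
with $\mathcal{L}$, use the simple normal crossings hypothesis to commute restriction to $\tld Z$ with the floor, and push forward along $\pi$. This yields an exact triangle whose right-hand term is precisely the first term of the desired triangle, and whose first two terms are $\myR \pi_*(\I_{\tld Z} \otimes \mathcal{L})$ and $\myR \pi_* \mathcal{L}$. Two further exact triangles exhibit these as ``kernel'' objects relative to $\myR \pi_* \cO_{\tld Y}(\lfloor (t-\epsilon) G \rfloor)$: Lemma~\ref{LemmaEmbeddedResolutionIsOk} applied to $Z$ gives
\[
\myR \pi_*(\I_{\tld Z} \otimes \mathcal{L}) \to \myR \pi_* \cO_{\tld Y}(\lfloor (t-\epsilon) G \rfloor) \to \DuBois{Z, \ba^{t-\epsilon}} \xrightarrow{+1},
\]
while the defining short exact sequence for $\DuBois{X \cup \Sigma, \ba^{t-\epsilon}}$ --- valid because $\pi$, being a log resolution of the larger subscheme $X \cup \Sing Z \supseteq X \cup \Sigma$, is also a log resolution of $(Y, X \cup \Sigma, \ba^t)$ --- gives
\[
\myR \pi_* \mathcal{L} \to \myR \pi_* \cO_{\tld Y}(\lfloor (t-\epsilon) G \rfloor) \to \DuBois{X \cup \Sigma, \ba^{t-\epsilon}} \xrightarrow{+1}.
\]
The octahedral axiom applied to the composition $\myR \pi_*(\I_{\tld Z} \otimes \mathcal{L}) \to \myR \pi_* \mathcal{L} \to \myR \pi_* \cO_{\tld Y}(\lfloor (t-\epsilon) G \rfloor)$ then outputs exactly the target triangle.

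The main obstacle is the round-down bookkeeping in the first step: one must verify the three-way classification of $\pi$-exceptional components of $E_Z$ and check that for $V(\ba)$-type components the positive coefficient of $G$ dominates the $\epsilon$-perturbation so that the floor does not jump across an integer. This is precisely where the hypothesis that $\pi$ is an isomorphism outside $(\Sing Z) \cup X \cup V(\ba)$ enters essentially; once the identity is in place, the assembly of the three triangles and the octahedral manipulation are formal.
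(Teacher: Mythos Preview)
Your proposal is correct and follows essentially the same approach as the paper: the paper assembles the identical three exact triangles into a $3\times 3$ diagram (as in Proposition~\ref{LemmaLeftVanishing}) and reads off the right-hand column, which is just the octahedral axiom in diagrammatic form. Your round-down identity matches the paper's (the paper writes $E_{\times}$ for the divisor you call $\overline{X\cup\Sigma}$; since every exceptional divisor mapping into $X\cup\Sigma\subseteq Z$ lies in $E_Z^{\red}$, these coincide), and your invocation of Lemma~\ref{LemmaEmbeddedResolutionIsOk} for the $Z$-triangle and of the defining sequence for the $(X\cup\Sigma)$-triangle is exactly what the paper does.
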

\begin{proof}
We begin with the following short exact sequence:
\begin{align*}
 0 \to
\cO_{\tld Y}(\lfloor (t - \epsilon)G - \epsilon (E_Z + \overline{X}) \rfloor) \tensor \I_{\tld Z} &\to
\cO_{\tld Y}(\lfloor (t - \epsilon)G - \epsilon (E_Z + \overline{X}) \rfloor)  \\
&\to
\cO_{\tld Z}(\lfloor (t - \epsilon)G - \epsilon (E_Z + \overline{X}) \rfloor|_{\tld Z}) \to
 0.
\end{align*}
We set $E_{\times}$ to be the union
of the components of $\mathrm{Supp}(E_Z \cup\overline{X})$
whose images in $Y$ are contained in $X \cup \Sigma$
so that $\pi(E_\times) = X \cup \Sigma$.
Furthermore, notice that $\mathrm{Supp}(E_Z \cup \overline{X}) \setminus E_{\times}
\subseteq \Supp G$ so that
\[
\lfloor (t - \epsilon)G - \epsilon ( (E_Z)_{\red}
\cup \overline{X}_{\red} ) \rfloor = \lfloor (t - \epsilon)G -
\epsilon (E_Z + \overline{X}) \rfloor = \lfloor (t - \epsilon)G - \epsilon (E_{\times}) \rfloor
\]
again because $\epsilon$ is sufficiently small.

We now form a diagram with exact triangles
as columns and rows as in the proof of Proposition \ref{LemmaLeftVanishing}:
\[
\SMALL
\xymatrix@C=7pt@R=12pt{
\myR \pi_* \cO_{\tld Z}(\lfloor (t - \epsilon)G -
\epsilon (E_Z + \overline{X}) \rfloor|_{\tld Z})[-1] \ar[d] \ar[r]
& 0  \ar[d] \ar[r] & \ar[d]  \myR \pi_* \cO_{\tld Z}(\lfloor (t - \epsilon)G -
\epsilon (E_Z + \overline{X}) \rfloor|_{\tld Z}) \\
\myR \pi_* \cO_{\tld Y}(\lfloor (t - \epsilon)G -
\epsilon (E_Z + \overline{X}) \rfloor) \tensor \I_{\tld Z}
\ar[d] \ar[r] & \myR \pi_* \cO_{\tld Y}(\lfloor (t - \epsilon)G \rfloor)
\ar[d]^-{\sim} \ar[r] & \DuBois{Z, \ba^{t-\epsilon}}\ar[d] \\
\myR \pi_* \cO_{\tld Y}(\lfloor (t - \epsilon)G - \epsilon E_\times \rfloor)
 \ar[r] & \ar[r] \myR \pi_* \cO_{\tld Y}(\lfloor (t - \epsilon)G \rfloor)
 & \DuBois{X \cup \Sigma, \ba^{t-\epsilon}}
}
\]
Our desired exact triangle is the right vertical column.
\end{proof}

\begin{thm}\label{thmRightVanishing}
With the notation as in {\em{Proposition \ref{LemmaLeftVanishing}}},
suppose that $X$ is a complete intersection variety
in $Y$, and no component of $X$ is contained inside $V(\ba)$,
then $\myH^i \myD\left(\DuBois{X, \ba^{t - \epsilon}}\right)
= 0$ for $i > -\dim X$.
\end{thm}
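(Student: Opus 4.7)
My plan is to induct on the codimension $c = \operatorname{codim}_Y X$.

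\textbf{Base case $c = 1$.} Here $X$ is a hypersurface, and the definition of $\DuBois{X, \ba^{t-\epsilon}}$ as the pushforward of the cokernel $M_{X, \ba^{t-\epsilon}}$ fits into the exact triangle
\[
\myR \pi_* \cO_{\tld Y}(\lfloor (t-\epsilon)G - \epsilon \overline X \rfloor) \to \myR \pi_* \cO_{\tld Y}(\lfloor (t-\epsilon) G \rfloor) \to \DuBois{X, \ba^{t-\epsilon}} \stackrel{+1}{\to}.
\]
After Grothendieck duality, each outer term becomes $\myR \pi_* \cO_{\tld Y}(K_{\tld Y} - \lfloor \cdot \rfloor)[\dim Y]$. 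Local vanishing for multiplier ideals applies to both: for $\lfloor (t-\epsilon)G\rfloor$ this is immediate, and for the perturbed divisor one absorbs the effect of $-\epsilon \overline X$ into a reduced SNC boundary and invokes Kawamata--Viehweg--Nadel, using $\pi$-semi-amplitude of $-G$ (since $\ba \cO_{\tld Y} = \cO_{\tld Y}(-G)$). Thus both dualized outer terms are concentrated in cohomological degree $-\dim Y$, and the long exact sequence forces $\myH^i \myD(\DuBois{X, \ba^{t-\epsilon}}) = 0$ for $i \notin \{-\dim Y, -\dim Y + 1\} = \{-\dim X - 1, -\dim X\}$; in particular it vanishes for $i > -\dim X$.

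\textbf{Inductive step $c \geq 2$.} I would pick generators $f_1, \dots, f_c$ of $\I_X$ as generic linear combinations of some set of generators $g_1, \dots, g_N$ of $\I_X$. The linear system $\langle g_1, \dots, g_N \rangle$ is base-point-free on $Y \setminus X$, so by Bertini the intermediate scheme $Z := V(f_1, \dots, f_{c-1})$ can be arranged to be a codimension $c-1$ complete intersection variety that is smooth away from $X$. In particular $\Sing Z \subseteq X$, so in the notation of Lemma \ref{LemmaHypersurfaceShortExactSequence} the set $\Sigma$ is contained in $X$ and $X \cup \Sigma = X$. That lemma then provides the exact triangle
\[
A \to \DuBois{Z, \ba^{t-\epsilon}} \to \DuBois{X, \ba^{t-\epsilon}} \stackrel{+1}{\to},
\]
with $A := \myR \pi_* \cO_{\tld Z}\bigl(\lfloor (t-\epsilon) G|_{\tld Z} - \epsilon (E_Z + \overline X)|_{\tld Z} \rfloor\bigr)$. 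Dualizing, Grothendieck duality (using that $\tld Z$ is smooth and $Z$ is Gorenstein as a complete intersection) rewrites $\myD(A)$ as $\myR \pi_* \cO_{\tld Z}\bigl(K_{\tld Z} + \lceil \epsilon(E_Z + \overline X)|_{\tld Z} - (t-\epsilon) G|_{\tld Z} \rceil\bigr)[\dim Z]$. Decomposing the round-up as $K_{\tld Z} + T - \lfloor t G|_{\tld Z}\rfloor$ for a reduced SNC boundary $T$ and applying local vanishing -- equivalently, Kawamata--Viehweg--Nadel for the pair $(\tld Z, T + \{t G|_{\tld Z}\})$ using $\pi$-semi-amplitude of $-G|_{\tld Z}$ -- concentrates $\myD(A)$ in degree $-\dim Z = -\dim X - 1$. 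Combined with the inductive hypothesis $\myH^i \myD(\DuBois{Z, \ba^{t-\epsilon}}) = 0$ for $i > -\dim Z$, the long exact sequence sandwiches $\myH^i \myD(\DuBois{X, \ba^{t-\epsilon}})$ between two vanishing terms for every $i > -\dim X$, closing the induction.

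\textbf{Main obstacles.} The key technical point in both steps is the local vanishing for a mixed divisor: the round-up contains both positive contributions from $(t-\epsilon)G$ and small negative contributions from $\epsilon(E_Z + \overline X)$, so one must carefully separate the reduced SNC boundary piece from the multiplier ideal piece before applying Kawamata--Viehweg--Nadel. Because $\epsilon$ can be chosen arbitrarily small, this separation is routine but must be done with care. The complete intersection hypothesis on $X$ is used crucially in the Bertini argument to produce the auxiliary $Z$ with $\Sing Z \subseteq X$; without it, $\Sigma$ could contain a component of codimension one in $Z$ disjoint from $X$, polluting the dualized triangle with $\DuBois{X \cup \Sigma, \ba^{t-\epsilon}}$ rather than $\DuBois{X, \ba^{t-\epsilon}}$ and derailing the induction.
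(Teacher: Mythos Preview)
Your overall architecture matches the paper's proof exactly: induct on codimension, handle the hypersurface case via the defining triangle, and for the inductive step choose generic hypersurfaces so that $Z\setminus X$ is smooth, whence $\Sigma\subseteq X$ and Lemma \ref{LemmaHypersurfaceShortExactSequence} yields a triangle with $\DuBois{X,\ba^{t-\epsilon}}$ rather than $\DuBois{X\cup\Sigma,\ba^{t-\epsilon}}$.

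There is, however, a real gap in the vanishing step, both in the base case and in the inductive step. You propose to absorb $\epsilon\overline X$ (resp.\ $\epsilon(E_Z+\overline X)|_{\tld Z}$) into a reduced SNC boundary $T$ and then invoke ``Kawamata--Viehweg--Nadel for the pair $(\tld Z, T+\{tG|_{\tld Z}\})$ using $\pi$-semi-amplitude of $-G|_{\tld Z}$''. But that pair is only \emph{lc}, not klt, because $T$ has coefficient-one components; standard local vanishing for multiplier ideals (equivalently relative Kawamata--Viehweg--Nadel) requires a klt boundary. The lc versions need the difference to be nef and big, or nef and log big on the lc centers, and $-(t-\epsilon)G|_{\tld Z}$ is merely $\pi$-semi-ample (indeed, it is zero when $\ba=\cO_Y$, where the vanishing you claim would be $R^i\pi_*\cO_{\tld Z}(K_{\tld Z}+T)=0$, which is false without further hypotheses; cf.\ Proposition \ref{vani-lemlem}).

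The paper fixes this by twisting: since $X$ is Cartier in $Y$ (base case) and Cartier in $Z$ (inductive step, because $X=Z\cap H_l$), one tensors with $\cO_Y(-X)$ (resp.\ $\cO_Z(-X)$) and uses the projection formula to replace $\epsilon\overline X$ by $-(1-\epsilon)\pi^*X$. This converts the boundary to one with all coefficients strictly below $1$, i.e.\ a genuine klt pair, and then ordinary local vanishing for multiplier ideals applies. You should insert this twist; it is precisely where the Cartier-in-$Z$ property of $X$ (coming from the complete intersection hypothesis) is used a second time.
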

\begin{proof}
We proceed by induction on the codimension
of $X$ in $Y$.  We begin with the case where $X$ is a hypersurface.
We have the following exact triangle:
\[
\xymatrix{
\myR \pi_* \cO_{\tld Y}(\lfloor (t - \epsilon)G - \epsilon \overline X\rfloor)
\ar[r] & \myR \pi_* \cO_{\tld Y}(\lfloor (t - \epsilon)G \rfloor) \ar[r] &
\DuBois{X, \ba^{t-\epsilon}} \ar[r]^-{+1} &,
}
\]
where $\overline X$ is the pullback of $X$ (if one
takes $\overline X$ to be the reduced pre-image of $X$,
you get the same result since $\epsilon$ is arbitrarily small).  Dualizing gives us:
\[
\xymatrix@C=15pt{
\myD\left(\DuBois{X, \ba^{t-\epsilon}}\right) \ar[r] & \myR
\pi_* \omega_{\tld Y}^{\mydot}(\lceil - (t - \epsilon)G \rceil) \ar[r]
&  \myR \pi_* \omega_{\tld Y}^{\mydot}(\lceil -(t - \epsilon)G + \epsilon \overline X\rceil) \ar[r]^-{+1} &.
}
\]
Taking cohomology gives us the claimed vanishing since
the cohomology of the right-most two terms vanish for $i > -\dim Y = -\dim X - 1$.
To see this explicitly, note that
the middle term vanishes due to
Kawamata--Viehweg vanishing in the form of local vanishing
for multiplier ideals.  The right most term also vanishes for the same reason once one notices that
\[
\myR \pi_* \omega_{\tld Y}^{\mydot}(\lceil -(t - \epsilon)G +
\epsilon \overline X\rceil) \tensor \cO_Y(-X)
\qis \myR \pi_* \omega_{\tld Y}^{\mydot}(\lceil -(t - \epsilon)G - (1 - \epsilon) \pi^* X \rceil).
\]

Now we assume that $X$ is a complete intersection in $Y$.
Choose hypersurfaces $H_1, \dots, H_{l}$ to be general hypersurfaces
containing $X$ such that $X = H_1 \cap \dots \cap H_l$.
Let $Z$ be an intersection of the first $l - 1 = (\dim Y - \dim X) - 1$
such hypersurfaces.  In this way, $X$ is a Cartier divisor in $Z$
and $Z \setminus X$ is smooth.  Notice also that $Z$ is certainly
$S_2$ and it is smooth at all points where $X$ is
smooth (which includes all the generic points of $X$).
In particular, $Z$ is smooth in codimension $1$
and thus it is normal.   Let $\pi : \tld Y \rightarrow Y$
be a log resolution of $(Y, X, \ba^t)$ that is simultaneously
a factorizing embedded resolution of $Z$, as in
Lemma \ref{LemmaHypersurfaceShortExactSequence}.
Dualizing the triangle from
Lemma \ref{LemmaHypersurfaceShortExactSequence}, we obtain a triangle:
\begin{equation}\label{eqDualedTriangleInCIProof}
\xymatrix@C=15pt{
\myD\left(\DuBois{X, \ba^{t-\epsilon}} \right)\ar[r]
&  \myD\left(\DuBois{Z, \ba^{t-\epsilon}} \right)\ar[r]
& \myR \pi_* \omega_{\tld Z}^{\mydot}(\lceil -(t - \epsilon)
G|_{\tld Z} + \epsilon (E_Z + \overline X) \rceil|_{\tld Z})\ar[r]^-{+1} &.
}
\end{equation}
Since $Z \setminus X$ is smooth, observe that
\begin{align*}
&\myH^{i} \myR \pi_* \omega_{\tld Z}^{\mydot}
(\lceil -(t - \epsilon)G + \epsilon (E_Z + \overline X) \rceil |_{\tld Z}) \tensor \cO_Z(-X) \\
 =&\myH^{i} \myR \pi_* \omega_{\tld Z}^{\mydot}
 (\lceil -(t - \epsilon)G|_{\tld Z} + \epsilon (\overline X)|_{\tld Z} \rceil ) \tensor \cO_Z(-X)
\end{align*}
which vanishes for $i > -\dim Z$ using the projection formula
and local vanishing for multiplier ideals.  Furthermore,
$\myH^i \myD\left(\DuBois{Z, \ba^{t - \epsilon}}\right) = 0$ for $i > -(\dim X + 1) = -\dim Z$
by the induction hypothesis.  Thus taking
cohomology of Equation (\ref{eqDualedTriangleInCIProof}) for $i > -\dim Z$ gives us the desired result.
\end{proof}

\begin{cor}
\label{CorollaryFirstShortExactSequence}
If $Z$ is a normal complete intersection,
$X$ is a Weil divisor in $Z$ and $V(\ba)$ doesn't
contain any component of $X$ or $Z$, then there is a short exact sequence
\[
\begin{array}{rcl}
0 &\to& \myH^{-\dim Z} \myD \left(\DuBois{Z, \ba^{t-\epsilon}} \right) \\
&\to&  \pi_* \cO_{\tld Z}(\lceil K_{\tld Z} -
(t - \epsilon)G|_{\tld Z} + \epsilon (E_Z + \overline X )|_{\tld Z} \rceil) \\
&\to&  \myH^{-\dim X} \myD \left( \DuBois{X \cup \Sigma, \ba^{t-\epsilon}} \right) \\
& \to & 0
\end{array}
\]
where $\Sigma$ and the remaining notation comes from
{\em{Lemma \ref{LemmaHypersurfaceShortExactSequence}}}.
\end{cor}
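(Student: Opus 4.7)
The strategy is to apply Grothendieck duality to the exact triangle of Lemma~\ref{LemmaHypersurfaceShortExactSequence} and extract the claimed short exact sequence from the resulting long exact sequence of cohomology sheaves, using Proposition~\ref{LemmaLeftVanishing} and Theorem~\ref{thmRightVanishing} to truncate that long exact sequence on both sides.

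First, I would dualize the exact triangle
\[
\myR \pi_* \cO_{\tld Z}(\lfloor (t - \epsilon)G|_{\tld Z} - \epsilon (E_Z + \overline X)|_{\tld Z} \rfloor) \to \DuBois{Z, \ba^{t-\epsilon}} \to \DuBois{X \cup \Sigma, \ba^{t-\epsilon}}
\]
from Lemma~\ref{LemmaHypersurfaceShortExactSequence}. Because $\tld Z$ is smooth of dimension $n := \dim Z$, Grothendieck duality identifies the dual of the leftmost term with $\myR (\pi|_{\tld Z})_* \omega_{\tld Z}^{\mydot}(\lceil -(t-\epsilon)G|_{\tld Z} + \epsilon (E_Z + \overline X)|_{\tld Z} \rceil)$. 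Using $\omega_{\tld Z}^{\mydot} \qis \omega_{\tld Z}[n]$, the $(-n)$th cohomology sheaf of this dual complex is precisely $\pi_* \cO_{\tld Z}(\lceil K_{\tld Z} - (t - \epsilon)G|_{\tld Z} + \epsilon (E_Z + \overline X)|_{\tld Z} \rceil)$, which is exactly the middle term appearing in the desired short exact sequence.

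Next, I would take the long exact sequence of cohomology of the dualized triangle around degree $-n$. Proposition~\ref{LemmaLeftVanishing} gives $\myH^{-n} \myD(\DuBois{X \cup \Sigma, \ba^{t-\epsilon}}) = 0$: since $Z$ is normal, $\Sigma \subseteq \Sing Z$ has dimension at most $n-2$, so $\dim(X \cup \Sigma) = \dim X = n-1$, and Proposition~\ref{LemmaLeftVanishing} applies. Theorem~\ref{thmRightVanishing} gives $\myH^{-n+1} \myD(\DuBois{Z, \ba^{t-\epsilon}}) = 0$ because $Z$ is a complete intersection. Inserting these two vanishings into the relevant segment of the long exact sequence immediately produces
\[
0 \to \myH^{-n} \myD(\DuBois{Z, \ba^{t-\epsilon}}) \to \pi_* \cO_{\tld Z}(\lceil K_{\tld Z} - (t - \epsilon)G|_{\tld Z} + \epsilon (E_Z + \overline X)|_{\tld Z} \rceil) \to \myH^{-\dim X} \myD(\DuBois{X \cup \Sigma, \ba^{t-\epsilon}}) \to 0,
\]
using that $-n + 1 = -\dim X$.

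The main obstacle I anticipate is the bookkeeping in applying Grothendieck duality to $\cO_{\tld Z}(\lfloor \cdots \rfloor)$, which lives on the strict transform $\tld Z \subset \tld Y$ but is being dualized after pushforward along $\pi : \tld Y \to Y$. One must track the shift produced by the adjunction $\omega_{\tld Z}^{\mydot} \qis \iota_{\tld Z}^! \omega_{\tld Y}^{\mydot}$ correctly, verify that restriction of $\lfloor (t - \epsilon)G - \epsilon (E_Z + \overline X) \rfloor$ to $\tld Z$ commutes with the rounding in the form used (which is automatic here because $\tld Z$ meets the remaining divisors in simple normal crossings by the factorizing embedded resolution hypothesis), and factor $\pi \circ \iota_{\tld Z}$ through the closed immersion $Z \hookrightarrow Y$ appropriately. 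None of these steps is genuinely hard, but care is required to keep the cohomological shifts and sign conventions straight.
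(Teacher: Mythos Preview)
Your proposal is correct and follows essentially the same route as the paper: dualize the triangle from Lemma~\ref{LemmaHypersurfaceShortExactSequence}, take cohomology in degree $-\dim Z$, and use Proposition~\ref{LemmaLeftVanishing} (for $X\cup\Sigma$, whose dimension is $\dim Z-1$) and Theorem~\ref{thmRightVanishing} (for $Z$) to kill the flanking terms. The bookkeeping you flag about Grothendieck duality on $\tld Z$ is handled exactly as you outline and is not a genuine obstacle.
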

\begin{proof}
Simply dualize the sequence from
Lemma \ref{LemmaHypersurfaceShortExactSequence}.
Then take cohomology and apply the vanishing results
Proposition \ref{LemmaLeftVanishing} and Theorem \ref{thmRightVanishing}.
\end{proof}

\begin{lem} \label{lemCanRemoveExtraneousBitsFromDualizing}
With the notation from {\em{Corollary \ref{CorollaryFirstShortExactSequence}}},
further assume that $X$ is normal and Cartier.
Then $\myH^{-\dim X} \myD \left( \DuBois{X \cup \Sigma, \ba^{t-\epsilon}} \right)
\cong \myH^{-\dim X} \myD \left( \DuBois{X, \ba^{t-\epsilon}} \right)$.
In particular, we have a short exact sequence
\[
\begin{array}{rcl}
0 &\to & \myH^{-\dim Z} \myD
\left(\DuBois{Z, \ba^{t-\epsilon}} \right) \\
 &\to & \pi_* \cO_{\tld Z}(\lceil K_{\tld Z} - (t - \epsilon)G |_{\tld Z} +
 \epsilon (E_Z  + \overline X)|_{\tld Z} \rceil) \\
  &\to &\myH^{-\dim X} \myD \left( \DuBois{X, \ba^{t-\epsilon}} \right) \\
  &\to & 0
\end{array}
\]
\end{lem}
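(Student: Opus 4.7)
The plan is to establish an isomorphism $\myH^{-\dim X}\myD(\DuBois{X,\ba^{t-\epsilon}})\cong \myH^{-\dim X}\myD(\DuBois{X\cup\Sigma,\ba^{t-\epsilon}})$ via a Mayer--Vietoris triangle for the pair $(X,\Sigma)$ combined with the left-vanishing of Proposition \ref{LemmaLeftVanishing}. Substituting this isomorphism into Corollary \ref{CorollaryFirstShortExactSequence} then yields the stated short exact sequence; the essential new geometric input will be a sharpened dimension bound on $X\cap \Sigma$ coming from the hypothesis that $X$ is normal and Cartier in $Z$.

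I would first refine the log resolution $\pi:\tld Y\to Y$ of Lemma \ref{LemmaHypersurfaceShortExactSequence} to a simultaneous embedded resolution of $X$, $\Sigma$, $X\cap\Sigma$ and $X\cup\Sigma$, with the reduced preimages $\overline X$, $\overline\Sigma$, $\overline{X\cap\Sigma}$, $\overline{X\cup\Sigma}$ in simple normal crossings with each other and with $\Supp G$ (where $\ba\cO_{\tld Y}=\cO_{\tld Y}(-G)$). In such a setting the scheme-theoretic identifications $\overline{X\cup\Sigma}=\overline X\cup\overline\Sigma$ and $\overline{X\cap\Sigma}=\overline X\cap\overline\Sigma$ hold, so one has the Mayer--Vietoris sequence
\begin{equation*}
0\to \cO_{\overline{X\cup\Sigma}}\to \cO_{\overline X}\oplus\cO_{\overline\Sigma}\to \cO_{\overline{X\cap\Sigma}}\to 0.
\end{equation*}
Twisting by $\cO_{\tld Y}(\lfloor (t-\epsilon)G\rfloor)$ and identifying with the defining cokernels $M_{\,\cdot\,,\ba^{t-\epsilon}}$ of the preceding definitions, the application of $R\pi_*$ produces a Mayer--Vietoris triangle
\begin{equation*}
\DuBois{X\cup\Sigma,\ba^{t-\epsilon}}\to \DuBois{X,\ba^{t-\epsilon}}\oplus \DuBois{\Sigma,\ba^{t-\epsilon}}\to \DuBois{X\cap\Sigma,\ba^{t-\epsilon}}\xrightarrow{+1}.
\end{equation*}

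The key geometric claim is $\dim(X\cap\Sigma)\leq \dim X-2$. Since $\Sigma\subseteq \Sing Z$, it suffices to show $X\cap \Sing Z\subseteq \Sing X$. Suppose $p\in X\cap\Sing Z$ were a smooth point of $X$: because $X$ is Cartier in $Z$, locally at $p$ one has $X=V(f)$ for a non-zero-divisor $f\in \cO_{Z,p}$, so $\cO_{X,p}=\cO_{Z,p}/(f)$ is regular of some dimension $d$ and $\dim \cO_{Z,p}=d+1$. The quotient map induces a surjection $\m_{Z,p}/\m_{Z,p}^2\twoheadrightarrow \m_{X,p}/\m_{X,p}^2$ whose kernel is spanned by the class of $f$, giving $\dim_k\m_{Z,p}/\m_{Z,p}^2\leq d+1=\dim\cO_{Z,p}$; thus $\cO_{Z,p}$ is regular, contradicting $p\in \Sing Z$. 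Combined with $\dim\Sing X\leq \dim X-2$ from normality of $X$, this gives $\dim(X\cap\Sigma)\leq \dim X-2$. Separately, normality of $Z$ forces $\dim\Sigma\leq \dim Z-2=\dim X-1$.

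To conclude, I would dualize the Mayer--Vietoris triangle and extract the relevant segment of the long exact sequence in cohomology:
\begin{equation*}
\myH^{-\dim X}\myD(\DuBois{X\cap\Sigma,\ba^{t-\epsilon}})\to \myH^{-\dim X}\myD(\DuBois{X,\ba^{t-\epsilon}})\oplus \myH^{-\dim X}\myD(\DuBois{\Sigma,\ba^{t-\epsilon}})\to \myH^{-\dim X}\myD(\DuBois{X\cup\Sigma,\ba^{t-\epsilon}})\to \myH^{-\dim X+1}\myD(\DuBois{X\cap\Sigma,\ba^{t-\epsilon}}).
\end{equation*}
Proposition \ref{LemmaLeftVanishing}, applied with the two dimension bounds just proved, annihilates three of the four flanking terms and produces the desired isomorphism; inserting this into the sequence of Corollary \ref{CorollaryFirstShortExactSequence} gives the lemma. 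The main obstacle is the strict inequality $\dim(X\cap\Sigma)\leq \dim X-2$: with only the a priori estimate $\dim(X\cap\Sigma)\leq \dim X-1$, the rightmost term $\myH^{-\dim X+1}\myD(\DuBois{X\cap\Sigma,\ba^{t-\epsilon}})$ could sit exactly at the top of its dualizing complex and would not be eliminated by the left-vanishing of Proposition \ref{LemmaLeftVanishing}; this is where the hypotheses that $X$ is normal and Cartier in $Z$ enter in an essential way.
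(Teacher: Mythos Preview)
Your overall strategy is exactly the one in the paper: establish a Mayer--Vietoris triangle for $X$, $\Sigma$, $X\cap\Sigma$, $X\cup\Sigma$, dualize, and kill the flanking terms via Proposition \ref{LemmaLeftVanishing} together with the dimension bounds $\dim(X\cap\Sigma)\leq\dim X-2$ and $\dim\Sigma\leq\dim X-1$. Your argument that $X\cap\Sing Z\subseteq\Sing X$ (from $X$ Cartier) and hence $\dim(X\cap\Sigma)\leq\dim X-2$ (from $X$ normal) matches the paper's exactly, as does the concluding long exact sequence.

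There is one technical point in the construction of the triangle that you gloss over. Twisting the Mayer--Vietoris sequence on the full reduced preimages $\overline X$, $\overline\Sigma$ by $\cO_{\tld Y}(\lfloor(t-\epsilon)G\rfloor)$ does \emph{not} directly yield the cokernels $M_{\,\cdot\,,\ba^{t-\epsilon}}$: for $\epsilon$ small one has $\lfloor(t-\epsilon)G-\epsilon\overline X\rfloor=\lfloor(t-\epsilon)G\rfloor$ along any component common to $\overline X$ and $\Supp G$, so $M_{X,\ba^{t-\epsilon}}$ is supported only on those components of $\overline X$ that are \emph{not} components of $G$. The paper therefore passes to the ``primed'' divisors $E_1'\subseteq\overline X$ and $E_2'\subseteq\overline\Sigma$ consisting of the components outside $\Supp G$, and must verify that $E_1'\cap E_2'$ is again a divisor (rather than a codimension-two intersection) so that the primed Mayer--Vietoris sequence is still short exact; this uses the simple normal crossing hypothesis in a mild but non-automatic way. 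Once that is checked, twisting and the nine-lemma give the desired triangle. This is a routine repair, and with it your proof coincides with the paper's.
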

\begin{proof}
We will need the following:
\begin{claim} \label{ClaimTriangleExists}  There is a triangle
\[
\xymatrix{
 \DuBois{X \cup \Sigma, \ba^{t - \epsilon}} \ar[r]
 & \DuBois{X, \ba^{t-\epsilon}} \oplus \DuBois{\Sigma, \ba^{t - \epsilon}}
 \ar[r] & \DuBois{\Sigma \cap X, \ba^{t - \epsilon}} \ar[r]^-{+1} &.
}
\]
\end{claim}
\begin{proof}[Proof of {\em{Claim \ref{ClaimTriangleExists}}}]
We fix $\pi : \tld Y \to Y$ to be an embedded resolution of $Z$
which is also a log resolution of $X$, $\Sigma$,
$X \cap \Sigma$ and $\ba$ and write $\ba \cdot \cO_{\tld Y}
= \cO_{\tld Y}(-G)$.  Let $E_1$ (respectively $E_2$) denote the
reduced pre-image of $X$ in $\tld Y$ (respectively, of $\Sigma$ in $\tld Y$)
and note that $E_1 \cap E_2$ is the reduced pre-image of
$X \cap \Sigma$ (which we assumed was also a divisor).  Furthermore, let
\begin{itemize}
 \item{} $E_1'$ denote the union of the components
 of $E_1$ which are not components of $G$, and let
 \item{} $E_2'$ denote the union of the components
 of $E_2$ which are not components of $G$.
\end{itemize}
Consider $E_1' \cap E_2'$.  I claim that this is a divisor and thus is
equal to the union of the components of $E_1 \cap E_2$ which
are not contained in $G$.  Suppose that $L$ is an irreducible component of
$E_1' \cap E_2'$ and suppose $L$ is not a divisor.  On the other hand,
$L \subseteq \pi^{-1}(X \cap \Sigma)$
by construction, and so it must be contained in some divisor $H$ lying over $X \cap \Sigma$.
Since $L$ is an irreducible component of $E_1' \cap E_2'$,
we must have that $H$ is not a component of $E_1' \cap E_2'$, and so in particular,
$H$ is contained in $G$.  Therefore $L$ is contained in $G$.

Since $L$ is not a divisor, it must be
codimension 2 (since it is a component of the the intersection of two divisors in a smooth space).
Suppose $L$ is in the intersection of a component $F_1 \subseteq E_1'$ and
a component $F_2$ of $E_2'$ (neither $F_1$ or $F_2$ are in $G$ by construction).
But then $F_1$, $F_2$ and $H$ would not be in simple normal
crossings at the generic point of $L$ (because the generic point of $L$
is a two dimensional regular local ring).  Therefore $L$ is not contained in $G$, a contradiction.

It follows that we have the following short exact sequence
\[
\hspace*{-0.2em}
\xymatrix@C=10pt{
0 \ar[r] & \cO_{\tld Y}( -(E_1' \cup E_2') ) \ar@{=}[d]
  \ar[r] & \cO_{\tld Y}(-E_1') \oplus \cO_{\tld Y}(- E_2') \ar@{=}[d]
  \ar[r]^-{-} & \cO_{\tld Y}(- (E_1' \cap E_2'))\ar@{=}[d] \ar[r]  & 0
\\
 0 \ar[r] & \cO_{\tld Y}(\lfloor -\epsilon (E_1' \cup E_2') \rfloor)
  \ar[r] & \cO_{\tld Y}(\lfloor  -\epsilon E_1' \rfloor)
  \oplus \cO_{\tld Y}(\lfloor -\epsilon E_2' \rfloor)
  \ar[r]^-{-} & \cO_{\tld Y}(\lfloor - \epsilon (E_1' \cap E_2') \rfloor) \ar[r]  & 0
}
\]
where the third horizontal map sends $(a,b)$ to $a-b$.
We tensor this short exact sequence with
$\cO_{\tld Y}(\lfloor (t - \epsilon)G\rfloor)$ and then map the result into
\[
\xymatrix@C=10pt{
 0 \ar[r] & \cO_{\tld Y}(\lfloor (t - \epsilon)G \rfloor)
  \ar[r] & \cO_{\tld Y}(\lfloor (t - \epsilon)G \rfloor)
  \oplus \cO_{\tld Y}(\lfloor (t - \epsilon)G\rfloor)
  \ar[r]^-{-} & \cO_{\tld Y}(\lfloor (t - \epsilon)G  \rfloor) \ar[r] & 0.
}
\]
By taking the cokernel, we obtain a short exact sequence from the nine-lemma,
\[
{
\xymatrix@C=12pt{
 0 \ar[r] & M_{X \cup \Sigma, \ba^{t-\epsilon}}
  \ar[r] & M_{X, \ba^{t-\epsilon}} \oplus M_{\Sigma, \ba^{t-\epsilon}}
  \ar[r]^-{-} & M_{X \cap \Sigma, \ba^{t-\epsilon}} \ar[r] & 0.
}
}
\]
Pushing forward completes the proof of the claim.
\end{proof}
Dualizing the triangle from the claim, we obtain
\[
 \xymatrix@C=15pt{
 \myD\left(\DuBois{\Sigma \cap X, \ba^{t - \epsilon}}\right)
 \ar[r] & \myD\left(\DuBois{X, \ba^{t - \epsilon}}\right)
 \oplus \myD\left(\DuBois{\Sigma, \ba^{t - \epsilon}}\right)
 \ar[r] & \myD\left(\DuBois{X \cup \Sigma, \ba^{t - \epsilon}}\right) \ar[r]^-{+1} &.
}
\]
We then take cohomology.
Notice that $\Sigma \cap X \subseteq (\Sing Z) \cap X \subseteq \Sing X$
since $X$ is Cartier.  The result follows by Proposition \ref{LemmaLeftVanishing}
since $\dim (\Sigma \cap X) \leq \dim X - 2$ (since $X$ is normal)
and $\dim \Sigma \leq \dim Z - 2 = \dim X - 1$ (since $Z$ is normal).
\end{proof}

When we combine
Lemma \ref{lemCanRemoveExtraneousBitsFromDualizing}
with Corollary \ref{corDimDuboisIsNonLC}, we obtain

\begin{cor}
\label{CorRealExactSequence}
 Suppose that $X$ is a normal irreducible Cartier
 divisor in $Z$ which itself is a normal irreducible complete
 intersection and suppose that $\ba$ is a non-zero ideal
 sheaf on $Z$ such that $V(\ba)$ does not contain $X$.
 Let $\pi : \tld Z \to Z$ be a log resolution of $Z$, $X$ and
 $\ba$, and let $\tld X$ denote the strict transform of $X$.
 Set $E$ to be the exceptional set of $\pi$, $F$ to be the exceptional set
 of $(\pi|_{\tld X})$, and write $\ba \cdot \cO_{\tld Z} = \cO_{\tld Z}(-G)$.
 Then we have a short exact sequence
\[
\begin{array}{rcl}
0 &\to & \pi_* \cO_{\tld Z}(\lceil K_{\tld Z} - (t - \epsilon)G + \epsilon E \rceil) \\
&\to & \pi_* \cO_{\tld Z}(\lceil K_{\tld Z} - (t - \epsilon)G + \epsilon (E + \pi^* X) \rceil) \\
&\to & \pi_* \cO_{\tld X}(\lceil K_{\tld X} - (t - \epsilon)G|_{\tld X} + \epsilon F \rceil) \\
&\to & 0
\end{array}
\]
\end{cor}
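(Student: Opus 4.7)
My plan is to derive the sequence by combining Lemma~\ref{lemCanRemoveExtraneousBitsFromDualizing} and Corollary~\ref{corDimDuboisIsNonLC}. The lemma already provides the desired three-term exact sequence; the corollary identifies the outer terms; the only real work lies in showing that the middle term, as computed in the lemma using the ambient resolution, matches the intrinsic expression appearing in the statement.

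First I would choose a compatible resolution. Embed $Z \subseteq Y$ in a smooth variety $Y$ (possible since $Z$ is a complete intersection) and pick an embedded log resolution $\tld\pi : \tld Y \to Y$ of $(Y, X, \ba^t)$ which is simultaneously a factorizing embedded resolution of $Z$, in the sense of Definition~\ref{defnFactorizingResolution}. Let $\tld Z \subseteq \tld Y$ be the strict transform of $Z$, so that $\pi := \tld\pi|_{\tld Z} : \tld Z \to Z$ is a log resolution of $(Z, X, \ba)$ and $\pi|_{\tld X}$ is a log resolution of $(X, \ba|_X)$. By the independence-of-embedding statement in Corollary~\ref{corDimDuboisIsNonLC}, we may use these restricted resolutions to compute $\myH^{-\dim Z}\myD(\DuBois{Z, \ba^{t-\epsilon}})$ and $\myH^{-\dim X}\myD(\DuBois{X, \ba^{t-\epsilon}})$.

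Next, I apply Lemma~\ref{lemCanRemoveExtraneousBitsFromDualizing}, whose hypothesis that $X$ is normal and Cartier in $Z$ is exactly what the corollary assumes. This produces a short exact sequence whose outer terms are $\myH^{-\dim Z}\myD(\DuBois{Z, \ba^{t-\epsilon}})$ and $\myH^{-\dim X}\myD(\DuBois{X, \ba^{t-\epsilon}})$. Corollary~\ref{corDimDuboisIsNonLC} applies to both $Z$ (normal irreducible, with $V(\ba) \not\supseteq Z$ since $\ba \neq 0$) and $X$ (normal irreducible, with $V(\ba) \not\supseteq X$ by hypothesis), and identifies these outer terms with $\pi_*\cO_{\tld Z}(\lceil K_{\tld Z} - (t-\epsilon)G + \epsilon E\rceil)$ and $\pi_*\cO_{\tld X}(\lceil K_{\tld X} - (t-\epsilon)G|_{\tld X} + \epsilon F\rceil)$ respectively, which are the outer terms in the desired short exact sequence.

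The main obstacle is to match the middle term from Lemma~\ref{lemCanRemoveExtraneousBitsFromDualizing}, namely $\pi_*\cO_{\tld Z}(\lceil K_{\tld Z} - (t-\epsilon)G|_{\tld Z} + \epsilon(E_Z + \overline X)|_{\tld Z}\rceil)$, with $\pi_*\cO_{\tld Z}(\lceil K_{\tld Z} - (t-\epsilon)G + \epsilon(E + \pi^* X)\rceil)$. The strategy is to show that the two round-up divisors on $\tld Z$ are in fact equal prime by prime. A routine check gives $\overline X|_{\tld Z} = \pi^* X$, because $\I_X \cdot \cO_{\tld Z}$ factors through $\I_X \cdot \cO_Z = \cO_Z(-X)$. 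The remaining comparison is between $\epsilon E_Z|_{\tld Z}$ and $\epsilon E$; these have different supports in general, since $E$ is $\pi$-exceptional while $E_Z|_{\tld Z}$ can include non-exceptional primes. However, because the log resolution's blowup centers lie in $X \cup V(\ba)$, any prime $P \subseteq \tld Z$ at which $E_Z|_{\tld Z}$ and $E$ disagree must have $\pi(P) \subseteq X \cup V(\ba|_Z)$; hence $P$ lies in $\Supp(\pi^* X) \cup \Supp(G|_{\tld Z})$, so the needed positive $\epsilon$-contribution is already supplied either by the $\pi^* X$-term or by the $\epsilon G$-piece coming from the expansion $-(t-\epsilon)G = -tG + \epsilon G$. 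A case analysis over primes $P$ of $\tld Z$ then shows that the two round-ups agree, so the two pushforward sheaves are literally equal. Substituting the three identifications into the short exact sequence of Lemma~\ref{lemCanRemoveExtraneousBitsFromDualizing} completes the proof.
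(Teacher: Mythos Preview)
Your approach is the same as the paper's: the paper's proof is one sentence, saying to apply Corollary~\ref{corDimDuboisIsNonLC} to the short exact sequence of Lemma~\ref{lemCanRemoveExtraneousBitsFromDualizing}. You supply considerably more detail than the paper does on matching the middle term, which the paper leaves implicit. One small correction to your middle-term analysis: the resolution in Lemma~\ref{LemmaHypersurfaceShortExactSequence} (which underlies Lemma~\ref{lemCanRemoveExtraneousBitsFromDualizing}) is only assumed to be an isomorphism outside $(\Sing Z) \cup X \cup V(\ba)$, not outside $X \cup V(\ba)$ alone, so a prime $P\subset \tld Z$ at which $E_Z|_{\tld Z}$ and $E$ might disagree could also lie over $\Sing Z$. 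Your case analysis still goes through once you observe that for such a $P$ (with $\pi(P)\subseteq \Sing Z$ but $\pi(P)\not\subseteq X\cup V(\ba)$), the prime $P$ is $\pi$-exceptional, and any $\tld\pi$-exceptional component $\hat E_i$ through $P$ has irreducible image contained in one of $\Sing Z$, $X$, or $V(\ba)$; the latter two force $P\in\Supp(\pi^*X)\cup\Supp G$, so in fact $\tld\pi(\hat E_i)\subseteq \Sing Z\subseteq Z$, whence $\hat E_i$ appears in $E_Z$ and $P\in\Supp(E_Z|_{\tld Z})$.
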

\begin{proof}
 This follows from an application of
 Corollary \ref{corDimDuboisIsNonLC} to the short exact sequence of Lemma \ref{lemCanRemoveExtraneousBitsFromDualizing}.
\end{proof}

\begin{thm}
\label{thmRestrictionForMaximalNonLC}
 If $Z$ is a normal complete intersection,
 $X \subset Z$ is a normal Cartier divisor and $\ba$
 is an ideal sheaf on $Z$ such that $V(\ba)$ does
 not contain any component of $Z$ or $X$, then there is a short exact sequence{\em{:}}
\[
\xymatrix@C=12pt{
0 \ar[r] & \mJ'(Z; \ba^{t}) \tensor \cO_Z(-X)
\ar[r] & \ar[r] \mJ'( (Z, X); \ba^{t}) \ar[r] & \mJ'(X; \ba^{t}) \ar[r] & 0.
}
\]
\end{thm}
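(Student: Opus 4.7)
The plan is to deduce the theorem directly from Corollary~\ref{CorRealExactSequence} by identifying each of the three pushforward sheaves in its short exact sequence with an appropriately twisted version of one of $\mJ'(Z;\ba^t)$, $\mJ'((Z,X);\ba^t)$, or $\mJ'(X;\ba^t)$. Choose a log resolution $\pi : \tld Z \to Z$ of $(Z, X, \ba)$ that is simultaneously a factorizing embedded resolution of $X$, and retain the notation $E$, $F$, $G$, $E_X := \pi^*X - \tld X$ from that corollary. The claim is that
\[(a) \cong \omega_Z \tensor \mJ'(Z; \ba^t), \quad (b) \cong \omega_Z(X) \tensor \mJ'((Z, X); \ba^t), \quad (c) \cong \omega_X \tensor \mJ'(X; \ba^t),\]
where $(a), (b), (c)$ denote the three terms in the corollary's sequence from left to right. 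Granting these, tensoring the whole sequence by $\omega_Z^{-1} \tensor \cO_Z(-X)$ yields the sequence claimed in the theorem; for the rightmost term one uses adjunction $\omega_X \cong \omega_Z(X)|_X$, so that $\omega_X \tensor (\omega_Z^{-1} \tensor \cO_Z(-X))|_X \cong \cO_X$.

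To verify each identification, I first apply the projection formula. Since $Z$ is Gorenstein (being a complete intersection) and $X$ is Cartier by hypothesis, the divisors $K_Z$, $K_Z + X$, and $K_X$ (by adjunction) are all Cartier, so their pullbacks have integer coefficients and can be pulled out of both the round-up and the pushforward. For the middle term $(b)$, the coefficient of the strict transform $\tld X$ inside the rounded-up divisor equals $\epsilon$ (coming from the $\epsilon \pi^* X$ summand), which rounds up to $1$; combined with $\pi^* X = \tld X + E_X$, this lets me extract an additional factor of $\cO_Z(X)$. After these manipulations, each of $(a), (b), (c)$ reduces to an expression of the form $\pi_* \cO_{\tld Z}(\lceil K_{\tld Z/Z} + A - (t-\epsilon)G + \epsilon E\rceil)$ for an appropriate correction divisor $A$ (namely $A=0$ for $(a)$, $A=-E_X$ for $(b)$, and the analogous correction on $\tld X$ for $(c)$); this must be matched to the expression $\pi_* \cO_{\tld Z}(\lceil K_{\tld Z/Z} + A - tG + \epsilon F_\bullet\rceil)$ one obtains by unwinding the definition of the corresponding $\mJ'$-ideal via the identity $-\lfloor D\rfloor + \sum_{k\geq 1}{}^k D = \lceil -D + \epsilon\,\Supp D^{\geq 1}\rceil$.

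Matching ``$\epsilon E$'' with ``$\epsilon F_\bullet$'' follows the pattern of the proof of Lemma~\ref{lem-a}: the difference of the two rounded-up divisors is effective and $\pi$-exceptional (supported on components of $G + E$ where the coefficient of the relevant divisor happens to land on a non-negative integer), and both pushforwards are $S_2$ sheaves on the normal variety $Z$ (respectively $X$). Since two $S_2$ coherent subsheaves of a common ambient sheaf that agree in codimension $1$ must coincide, and our two pushforwards agree off the image of the exceptional locus, they are equal. The main obstacle is the bookkeeping for $(b)$: one must carefully track how the strict transform $\tld X$ enters the rounded-up divisor and reconcile this with the fact that $\Delta_{\tld Z} = \pi^* X - K_{\tld Z/Z}$ (for $\mJ'((Z,X); \ba^t)$) contains $\tld X$ with coefficient $1$, so that the ``$\tld X$''-contributions cancel and one ends up with a clean description of $\mJ'((Z,X); \ba^t)$ involving the extra correction $-E_X$, matching what the above analysis of $(b)$ produces.
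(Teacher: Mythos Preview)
Your approach is essentially the same as the paper's: tensor the short exact sequence of Corollary~\ref{CorRealExactSequence} by $\cO_Z(-K_Z-X)=\omega_Z^{-1}\otimes\cO_Z(-X)$ and identify the three terms with $\mJ'(Z;\ba^t)\otimes\cO_Z(-X)$, $\mJ'((Z,X);\ba^t)$, and $\mJ'(X;\ba^t)$ respectively, using adjunction for the last. The paper's proof is a one-liner that leaves the first two identifications implicit (they are meant to come out of Corollary~\ref{corDimDuboisIsNonLC}); you spell them out in more detail, but your $S_2$ argument for reconciling the ``$\epsilon E$'' formulation with the ``$\epsilon\,\Supp(\Delta_{\tld Z}+tG)^{\ge 1}$'' formulation from the definition of $\mJ'$ is not fully justified as written---pushforwards of line bundles from a resolution, and multiplier-type ideals in particular, need not be $S_2$ or reflexive, so agreeing in codimension one does not automatically force equality.
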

\begin{proof}
Tensor the exact sequence from
Corollary \ref{CorRealExactSequence} with $\cO_Z(-K_Z - X)$ noticing that
\[
\begin{array}{rcl}
& & \pi_* \cO_{\tld X}
(\lceil K_{\tld X} - (t - \epsilon)G|_{\tld X} + \epsilon F \rceil) \tensor \cO_Z(-K_Z - X)
\\
 & \cong & \pi_* \cO_{\tld X}(\lceil K_{\tld X} -
 (t - \epsilon)G|_{\tld X} + \epsilon F \rceil) \tensor \cO_X \tensor \cO_Z(-K_Z - X)  \\
 & \cong & \pi_* \cO_{\tld X}(\lceil K_{\tld X} -
 (t - \epsilon)G|_{\tld X} + \epsilon F \rceil) \tensor \cO_X(-K_X) \\
 & \subseteq & \cO_X.
\end{array}
\]
\end{proof}

\part{A characteristic $p$ analog of maximal non-lc ideals}
This part is devoted to the study of a positive characteristic analog of the maximal non-lc ideal sheaves.
As mentioned earlier, this is independent of the previous part, except for the definition of the maximal non-lc ideal sheaves.  We should also mention that this is a first attempt.  The authors expect that further refinements of the definition may be necessary, in particular see Remark \ref{remOtherDefinitions}.

\section{Non-F-pure ideals}\label{secNonFpureIdeals}
In this section, we introduce a characteristic $p$ analog of maximal non-lc ideals, called non-F-pure ideals, and study their basic properties.

From this point forward, all rings are Noetherian commutative rings with identity.
For a reduced ring $R$, we denote by $R^{\circ}$ the set of elements of $R$ that are not in any minimal prime.

Let $R$ be a reduced ring of characteristic $p>0$.
For an ideal $I$ of $R$ and a power $q$ of $p$, we denote by $I^{[q]}$ the ideal generated by the $q^{\rm th}$ powers of elements of $I$.
Given an $R$-module $M$ and an integer $e \ge 1$, we will use $F^e_*M$ to denote the $R$-module which agrees with $M$ as an additive group, but where the multiplication is defined by $r \cdot m=r^{p^e}m$.
For example, $I \cdot F^e_*M \cong F^e_*(I^{[p^e]}M)$.
We say that $R$ is \textit{$F$-finite} if $F^1_*R$ is a finitely generated $R$-module. For example,
a field $k$ is $F$-finite if and only if $[k:k^p] < \infty$, and any algebra essentially of finite type over an $F$-finite field is $F$-finite.

\begin{defn}
A \textit{triple} $(R, \Delta, \ba^t)$ is the combined information of
\begin{enumerate}
\item[(i)] an $F$-finite reduced ring $R$,
\item[(ii)] an ideal $\ba \subseteq R$ such that $\ba \cap R^{\circ} \ne \emptyset$,
\item[(iii)] a real number $t>0$.
\end{enumerate}
Furthermore, if $R$ is a normal domain, then we also consider
\begin{enumerate}
\item[(iv)] an effective $\R$-divisor $\Delta$ on $X=\Spec R$.
\end{enumerate}
If $\ba = R$ (resp., $\Delta=0$) then we call the triple $(R, \Delta, \ba^t)$ a \textit{pair} and denote it by $(R, \Delta)$ (resp., $(R, \ba^t)$).
\end{defn}

First we recall the definitions of generalized test ideals, F-purity and F-regularity for triples.
\begin{defn}\label{testidealdef}
Let $(R, \Delta, \ba^t)$ be a triple.
\begin{enumerate}
\item[(i)] (cf. \cite[Definition-Proposition 3.3]{BSTZ}, \cite[Lemma 2.1]{HT}, \cite[Definition-Theorem 6.5]{HY}, \cite[Definition 2.6]{Ta})
The \textit{big generalized test ideal} $\tau_b((R, \Delta);\ba^t)$ is defined to be
$$\tau_b((R, \Delta);\ba^t)=\sum_{e \ge n}\sum_{\varphi} \varphi(F^e_*(\ba^{\lceil t p^e \rceil}d)),$$
where $n$ is an arbitrary positive integer and $\varphi$ ranges over all elements of $\Hom_R(F^e_*R(\lceil p^e \Delta \rceil), R) \subseteq \Hom_R(F^e_*R, R)$, and where $d \in R^{\circ}$ is a big test element for $R$.
We do not give the definition of big test elements here (see \cite{Ho} for the definition of big test elements), but, for example, if the localized ring $R_d$ is regular, then some power $d^n$ is a big test element for $R$ by \cite[p.63, Theorem]{Ho}.

\item[(ii)] (\cite[Definition 3.2]{Sc4} cf.~\cite[Definition 2.1]{HW}, \cite[Definition 3.1]{SchwedeSharpFpurity}, \cite[Definition 3.1]{Ta2})
Let $\overline{\ba^{\lceil t \bullet \rceil}}:=\{\ba_m\}_{m \in \N}$ be a graded family of ideals defined by $\ba_m=\overline{\ba^{\lceil t(m-1) \rceil}}$, where $\overline{\mathfrak{b}}$ is the integral closure of an ideal $\mathfrak{b} \subseteq R$. 
We say that $(R, \Delta, \overline{\ba^{\lceil t \bullet \rceil}})$ is \textit{sharply F-pure} if there exist an integer $e \ge 1$ and a map
$$\varphi \in \Hom_R(F^e_*R(\lceil (p^e-1)\Delta \rceil), R) \cdot F^e_*\ba_{p^e-1}$$
such that $\varphi(1)=1$.

\item[(iii)] (\cite[Definition 3.2]{Sc4} cf.~\cite[Definition 2.1]{HW}, \cite[Definition 3.1]{Ta2})
We say that $(R, \Delta, \ba^t)$ is \textit{strongly F-regular} if for every $d \in R^{\circ}$, there exist an integer $e \ge 1$ and a map
$$\varphi \in \Hom_R(F^e_*R(\lceil (p^e-1)\Delta \rceil), R) \cdot F^e_*(\ba^{\lceil t(p^e-1) \rceil}d)$$
such that $\varphi(1)=1$.
\end{enumerate}
\end{defn}

\begin{rem}
(1) 
Considering the case where $d=1$, one can easily see that if $(R, \Delta, \ba^t)$ is strongly F-regular, then $(R, \Delta, \overline{\ba^{\lceil t \bullet \rceil}})$ is sharply F-pure. Also, it follows from \cite[Corollary 5.7]{Sc4} that $(R, \Delta, \ba^t)$ is strongly F-regular if and only if $\tau_b((R, \Delta);\ba^t)=R$.

(2) In the case where $\Delta=0$ and $\ba=0$, $(R, \Delta, \overline{\ba^{\lceil t \bullet \rceil}})$ is sharply F-pure (resp. $(R, \Delta, \ba^t)$ is strongly F-regular) if and only if the ring $R$ is F-pure (resp. strongly F-regular), and $\tau_b((R, \Delta); \ba^t)=\tau_b(R)$ is the big test ideal.
Refer to \cite{HR}, \cite{HH} and \cite{Ho} for basic properties of F-pure rings, strongly F-regular rings and the big test ideal, respectively.
\end{rem}

Thanks to \cite{HY} and \cite{Ta}, the generalized test ideal can be viewed as a characteristic $p$ analog of the multiplier ideal. In particular, a strongly F-regular triple $(X=\Spec R, \Delta, \ba^t)$ corresponds to a klt triple under the condition that $K_X+\Delta$ is $\Q$-Cartier.
Also, a sharply F-pure triple is expected to correspond to a lc triple under the same condition.
Employing this philosophy, we introduce a characteristic $p$ analog of maximal non-lc ideals.

\begin{defn}[\textup{cf.~\cite[Corollary 2.14]{Bl}}]
Let $(R, \Delta, \ba^t)$ be a triple.
We denote the integral closure of an ideal $\mathfrak{b}$ of $R$ by $\overline{\mathfrak{b}}$. 
We then define the family of  ideals $\{\sigma_n((R,\Delta);\ba^t)\}_{n \in \N}$ inductively as follows\footnote{This definition of $\sigma_n((R, \Delta); \ba^t)$ is slightly different from the one given in a  previous version. We changed the definition because the proof of Theorem \ref{monomialcharp} didn't work for the previous one.}:
\begin{align*}
\sigma_1((R,\Delta);\ba^t)&=\sum_{e \ge 1}\sum_{\varphi} \varphi(F^e_*\overline{\ba^{\lceil t (p^e-1) \rceil}}),\\
\sigma_2((R,\Delta);\ba^t)&=\sum_{e \ge 1}\sum_{\varphi} \varphi(F^e_*(\sigma_1((R, \Delta);\ba^t)\overline{\ba^{\lceil t (p^e-1) \rceil}})),\\
\sigma_n((R,\Delta);\ba^t)&=\sum_{e \ge 1}\sum_{\varphi} \varphi(F^e_*(\sigma_{n-1}((R, \Delta); \ba^t)\overline{\ba^{\lceil t (p^e-1) \rceil}})),
\end{align*}
where
$\varphi$ runs through all elements of $\Hom_R(F^e_*R(\lceil (p^e-1)\Delta \rceil), R) \subseteq \Hom_R(F^e_*R, R)$.
Just for the convenience, we decree that $\sigma_0((R,\Delta);\ba^t)=R$.
It follows from \cite[Proposition 2.13]{Bl} that the descending chain
$$R \supseteq \sigma_1((R,\Delta);\ba^t) \supseteq \sigma_2((R,\Delta);\ba^t) \supseteq \sigma_3((R,\Delta);\ba^t) \supseteq \cdots$$
stabilizes at some $n \in \N$.
Then the \textit{non-F-pure ideal} $\sigma((R,\Delta);\ba^t)$ is defined to be
$$\sigma((R,\Delta);\ba^t)=\sigma_n((R,\Delta);\ba^t)=\sigma_{n+1}((R,\Delta);\ba^t)=\cdots.$$
When $\ba=R$ (resp. $\Delta=0$), we simply denote this ideal by $\sigma(R, \Delta)$ (resp. $\sigma(R, \ba^t)$).
\end{defn}

\begin{rem}\label{remonexponent}
(1)
If $\Delta = 0$ and $\ba = R$, then $\sigma((R, \Delta); \ba^t) = \sigma(R)$ is just the image of the map $\Hom_R(F^e_* R, R) \to R$ which sends $\phi$ to $\phi(1)$, at least for $e \gg 0$.  In the case that $R$ is local and Gorenstein with perfect residue field, this image is stable for $e \gg 0$ by the Matlis dual of a celebrated result of Hartshorne and Speiser, see \cite[Proposition 1.11]{HS}.  Thus, Blickle's stabilization result \cite[Proposition 2.13]{Bl} should be viewed as a large generalization of Hartshorne and Speiser's result.

(2) The real exponent $t$ on the ideal $\ba$ in the definition of big generalized test ideals (Definition \ref{testidealdef} (i)) is just a formal notation, but it is compatible with ``real" powers of $\ba$.
That is, setting $\ib=\ba^n$ for $n \in \N$, one has $\tau_b((R, \Delta); \ib^t)=\tau_b((R, \Delta); \ba^{nt})$.
However, in the case of non-F-pure ideals, it is not compatible in general.
For example, if $R=k[x]$ is the one-dimensional polynomial ring over an $F$-finite field $k$ of characteristic $p>0$, then $\sigma(R, (x))=R$ but $\sigma(R, (x^p)^{1/p})=x$.
\end{rem}

\begin{rem}
Classically, the test ideal is defined as the annihilator ideal of some submodule of the injective hull.
We can define the non-F-pure ideal in a similar way.

Let $(R, \m)$ be an $F$-finite reduced local ring of characteristic $p>0$, and denote by $E=E_R(R/\m)$ the injective hull of the residue field $R/\m$.
For each integer $e \ge 1$, we denote $\F^{e, \Delta}(E):= F^e_*R(\lceil (p^e-1)\Delta \rceil) \otimes_R E$ and regard it as an $R$-module by the action of $F^e_*R \cong R$ from the left.
Then the $e^{\rm th}$ iteration of the Frobenius map induces a map $F^e:E  \to \F^{e, \Delta}(E)$.
The image of $z \in E$ via this map is denoted by $z^{p^e}:=1 \otimes z=F^e(z) \in \F^{e, \Delta}(E)$.

$N_1((R,\Delta);\ba^t)$ is defined to be the submodule of $E$ consisting of all elements $z \in E$ such that $\overline{\ba^{\lceil t(p^e-1) \rceil}}z^{p^e}=0$ in $\F^{e, \Delta}(E)$ for all $e \in \N$.
$N_2((R,\Delta);\ba^t)$ is defined to be the submodule of $E$ consisting of all elements $z \in E$ such that $\mathrm{Ann}_R(N_1((R,\Delta);\ba^t))\overline{\ba^{\lceil t(p^e-1) \rceil}}z^{p^e}=0$ in $\F^{e, \Delta}(E)$ for all $e \in \N$.
Inductively we define $N_n((R,\Delta);\ba^t)$ to be the submodule of $E$ consisting of all elements $z \in E$ such that $\mathrm{Ann}_R(N_{n-1}((R,\Delta);\ba^t))\overline{\ba^{\lceil t(p^e-1) \rceil}}z^{p^e}=0$ in $\F^{e, \Delta}(E)$ for all $e \in \N$.
Then the ascending chain
$$N_1((R,\Delta);\ba^t) \subseteq N_2((R,\Delta);\ba^t) \subseteq N_3((R,\Delta);\ba^t) \subseteq \cdots \subseteq E$$
stabilizes at some $n \in \N$ and $\sigma((R, \Delta);\ba^t)=\mathrm{Ann}_R(N_n((R,\Delta);\ba^t))$.
\end{rem}

\begin{rem}
\label{remOtherDefinitions}
There are numerous other non-$F$-pure ideals that one can define.  The ideal $\sigma((R, \Delta); \ba^t)$ is the largest ideal that we know of that both commutes with localization and seems naturally determined.

We briefly enumerate some of the other potential non-$F$-pure ideals and describe some of their advantages and disadvantages.  All of these ideals define the non-$F$-pure locus of $(R, \Delta, \ba^t)$ (if they stabilize in the right way).
\begin{itemize}
\item[(1)]  $\sigma_1((R,\Delta);\ba^t)$.  This ideal still defines the non-$F$-pure locus and its formation commutes with localization.  However, we do not believe 
that Theorem \ref{Fpure=>lc} holds for this ideal.  
The same comments also hold for the other $\sigma_i$.

\item[(2)]  For any fixed $n$, consider the ideal $\sigma_n'((R, \Delta); \ba^t) = \sum_{e \ge n}\sum_{\varphi} \varphi(F^e_*\overline{\ba^{\lceil t (p^e-1) \rceil}})$.  
This ideal is non-canonically determined,  because of the choice of $n$ (for various reasons, it is desirable to choose $n$ sufficiently large).
It may be that $\sigma'((R, \Delta), \ba^t) := \cap \sigma_n'((R, \Delta); \ba^t)$ is a good alternative, but we do not know if this intersection stabilizes for sufficiently large $n$ (if it does, then $\sigma'((R, \Delta, \ba^t)$ also commutes with localization).
If it does, then the ideal $\sigma'((R, \Delta); \ba^t)$ defines the non-$F$-pure locus and is {\it a priori} larger than $\sigma((R, \Delta); \ba^t)$, see Lemma \ref{nonFpurecontainment} below.  If $R$ is local with injective hull $E$ of the residue field, $\Delta = 0$ and $\ba = R$, then $\sigma'((R, \Delta);\ba^t)=\sigma'(R)$ also coincides with $$\Ann_R 0^F_{E} = \Ann_R \{ z \in E\,|\, z^{p^e} = 0 \text{ for }e \gg 0\},$$ the annihilator ideal of the Frobenius closure $0^F_{E}$ of the zero submodule in the injective hull $E$.

\item[(3)]  Suppose now that $R$ is local and that $(p^{e_0} - 1)(K_R + \Delta)$ is Cartier (for some $e_0$).  Then consider the ideal $\sum_{e = n e_0, n > 0}\sum_{\varphi} \varphi(F^e_*\overline{\ba^{\lceil t (p^e-1) \rceil}})$.  This ideal suffers from the same issue that (2) does, but it is even smaller.  If it can be shown to stabilize for sufficiently large and divisible $e_0$, then it would be useful.  In particular, one could prove versions of the restriction theorem (Theorem \ref{restrictionTheoremForNonFPure}) for triples $(R, \Delta, \ba^t)$.

\item[(4)]  Associated to a triple $(R, \Delta, \ba^t)$, one can define a \emph{Cartier-algebra} on $R$, see \cite{Bl} for the definition and details.  The ideal $\sigma((R, \Delta); \ba^t)$ is a natural object associated to this algebra.  If one replaces this Cartier-algebra by a Veronese sub-algebra, one obtains a different non-$F$-pure ideal.  If there is some stabilization of these non-$F$-pure ideals for sufficiently fine Veronese sub-algebras, then this could be very useful.
\end{itemize}

Suppose that $R$ is $\Q$-Gorenstein with index not divisible by $p > 0$.  Further suppose that $\Delta = 0$ and $\ba = R$.  Then $\sigma(R)$ coincides with $\sigma'_n(R)$ from (2) for $n \gg 0$.  This ideal also coincides with the ideal from (3) for sufficiently large and divisible $e_0$ and coincides with ideal from (4) for a sufficiently fine Veronese subalgebra.

To see that $\sigma'_n(R) = \sigma(R)$ for $n \gg 0$, first assume without loss of generality that $R$ is local.
Then, notice that the evaluation-at-1 map $\Hom_R(F^{e+1}_* R, R) \to R$ factors through $\Hom_R(F^e_* R, R)$.  Thus $\sigma'_n(R)$ is simply the image of $\Hom_R(F^{n}_* R, R) \to R$.  If $R$ is $\Q$-Gorenstein and $(p^{e_0} - 1)K_R$ is Cartier, then it follows from \cite[Corollary 3.10]{Sc2} that the image of $\Hom_R(F^{me_0}_* R, R) \to R$ is contained in $\sigma_m(R)$.  This implies that $\sigma'_n(R) \subseteq \sigma(R)$ for $n \gg 0$.  The reverse containment is done (in much greater generality) in Lemma \ref{nonFpurecontainment} below.

The equality with the ideals from (3) and (4) follow similarly.
\end{rem}

Before discussing the basic properties of non-F-pure ideals, we start with the following technical lemma.

\begin{lem}\label{nonFpurecontainment}
Let $(R, \Delta, \ba^t)$ be a triple.
Then for each $n \in \N$ and $i \in \Z_{\ge 0}$, one has
$$\sigma_{n+i}((R,\Delta);\ba^t) \subseteq \sum_{e \ge i+1}\sum_{\varphi} \varphi(F^e_*(\sigma_{n-1}((R, \Delta); \ba^t)\overline{\ba^{\lceil t (p^e-1) \rceil}}))$$
where $\varphi$ ranges over all elements of $\Hom_R(F^e_*R(\lceil (p^e-1)\Delta \rceil), R)$.
In particular, for all $n \in \N$, the ideal $\sigma_n((R, \Delta); \ba^t)$ is contained in
$$\sum_{e \ge n}\sum_{\varphi} \varphi(F^e_*\overline{\ba^{\lceil t (p^e-1) \rceil}}) = \sigma_n'((R, \Delta); \ba^t).$$
\end{lem}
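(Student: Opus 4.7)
\medskip

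\noindent\emph{Proof plan.} The plan is to induct on $i \ge 0$, using the recursive definition of $\sigma_{n+i+1}$ together with the fact that composition of Frobenius-twisted homomorphisms respects the $\Delta$-twist and climbs one level up in the Frobenius filtration. The base case $i=0$ is nothing more than the very definition of $\sigma_n$. For the inductive step, I would start with a generating element of $\sigma_{n+i+1}$, which by definition has the form $\varphi(F^e_*(xy))$ with $e\ge 1$, $x \in \sigma_{n+i}$, $y \in \overline{\ba^{\lceil t(p^e-1)\rceil}}$, and $\varphi \in \Hom_R(F^e_*R(\lceil(p^e-1)\Delta\rceil),R)$. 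By the induction hypothesis applied to $x$, I may write $x$ as a sum of terms $\varphi'(F^{e'}_*z)$ with $e' \ge i+1$, $z \in \sigma_{n-1}\cdot\overline{\ba^{\lceil t(p^{e'}-1)\rceil}}$ and $\varphi'\in \Hom_R(F^{e'}_*R(\lceil(p^{e'}-1)\Delta\rceil),R)$.

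The key manipulation is then to push $y$ inside $\varphi'$ by $R$-linearity, obtaining $y\cdot\varphi'(F^{e'}_*z) = \varphi'(F^{e'}_*(y^{p^{e'}}z))$, and to recognize the outer composition as
\[
\varphi\bigl(F^e_*\,\varphi'(F^{e'}_*(y^{p^{e'}}z))\bigr) \;=\; (\varphi\circ F^e_*\varphi')\bigl(F^{e+e'}_*(y^{p^{e'}}z)\bigr).
\]
Here I would invoke the standard fact (used throughout \cite{Bl}, and implicit in the Cartier-algebra formalism) that composition of $\Delta$-twisted homomorphisms yields a $\Delta$-twisted homomorphism at the combined Frobenius level, that is $\varphi\circ F^e_*\varphi'\in \Hom_R(F^{e+e'}_*R(\lceil(p^{e+e'}-1)\Delta\rceil),R)$.

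To check that the argument $y^{p^{e'}}z$ sits inside $\sigma_{n-1}\cdot\overline{\ba^{\lceil t(p^{e+e'}-1)\rceil}}$, I would combine three routine observations about integral closure: $\overline{I}^{[q]}\subseteq \overline{I^q}$, $\overline{I^a}\cdot\overline{I^b}\subseteq\overline{I^{a+b}}$, and the numerical inequality
\[
\lceil t(p^e-1)\rceil\, p^{e'} + \lceil t(p^{e'}-1)\rceil \;\ge\; t(p^{e+e'}-1),
\]
whose left-hand side is an integer and therefore dominates $\lceil t(p^{e+e'}-1)\rceil$. Since $e\ge 1$ and $e'\ge i+1$ force $e+e'\ge i+2$, this exhibits each generator of $\sigma_{n+i+1}$ as an element of the desired sum, completing the induction.

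The ``in particular'' statement comes for free by specializing to $n=1$ and $i=n-1$, using $\sigma_0=R$, which yields $\sigma_n \subseteq \sum_{e\ge n}\sum_\varphi \varphi(F^e_*\overline{\ba^{\lceil t(p^e-1)\rceil}}) = \sigma_n'$. The only nontrivial ingredient is the composition rule for twisted $p^{-e}$-linear maps; once that is in hand, the whole argument is really just careful bookkeeping of exponents on $\ba$ and on the Frobenius power, and that bookkeeping will be the main (but still mild) obstacle.
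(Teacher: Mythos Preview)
Your proposal is correct and follows essentially the same approach as the paper's proof: induction on $i$, unwinding a generator of $\sigma_{n+i+1}$ via the recursive definition, applying the induction hypothesis to the $\sigma_{n+i}$-factor, and then composing the two $\Delta$-twisted Frobenius maps while checking the numerical inequalities $p^{e'}\lceil(p^e-1)\Delta\rceil+\lceil(p^{e'}-1)\Delta\rceil\ge\lceil(p^{e+e'}-1)\Delta\rceil$ and $p^{e'}\lceil t(p^e-1)\rceil+\lceil t(p^{e'}-1)\rceil\ge\lceil t(p^{e+e'}-1)\rceil$. The only cosmetic difference is that the paper phrases the inner map $\varphi'$ as an $F^e_*R$-linear map $\psi:F^{e+l}_*R\to F^e_*R$ rather than an $R$-linear map, but this is just the identification $\Hom_{F^e_*R}(F^{e+l}_*R,F^e_*R)\cong F^e_*\Hom_R(F^l_*R,R)$ and the argument is the same.
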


\begin{proof}
From the point of view of Blickle's theory of Cartier algebras, this statement is essentially obvious.  We write down a proof in detail however.

 For all integers $i \ge 0$, set
$$\sigma_{i, n}((R, \Delta);\ba^t):=\sum_{e \ge i+1}\sum_{\varphi} \varphi(F^e_*(\sigma_{n-1}((R, \Delta); \ba^t)\overline{\ba^{\lceil t (p^e-1) \rceil}})).$$
We will prove the assertion by induction on $i$.
Obviously we may assume that $i \ge 1$.
Let $e \ge 1$ be an integer, and fix any $\varphi \in \Hom_R(F^e_*R(\lceil (p^e-1)\Delta \rceil), R)$, $a \in F^e_*\overline{\ba^{\lceil t(p^e-1) \rceil}}$ and $b \in F^e_*\sigma_{n+i-1}((R, \Delta); \ba^t)$.
It then suffices to show that $\varphi(ab) \in \sigma_{i, n}((R, \Delta);\ba^t)$.
It follows from the induction hypothesis that $b \in F^e_*\sigma_{i-1, n}((R, \Delta);\ba^t)$, that is,
$$b \in \sum_{l \ge i}\sum_{\psi} \psi(F^{e+l}_*(\sigma_{n-1}((R, \Delta); \ba^t)\overline{\ba^{\lceil t (p^l-1) \rceil}})),$$
where $\psi$ ranges over all elements of $\Hom_{F^e_*R}(F^{e+l}_*R(\lceil (p^l-1)\Delta \rceil), F^e_*R)$.
Since $p^l \lceil (p^e-1) \Delta \rceil+\lceil (p^l-1) \Delta \rceil \geq \lceil (p^{e+l}-1) \Delta \rceil$,
the map $\varphi \circ \psi$ can be regarded as an element of $\Hom_R(F^{e+l}_*R(\lceil (p^{e+l}-1)\Delta \rceil), R)$.
Also, since $p^l\lceil t(p^e-1) \rceil+\lceil t(p^l-1) \rceil \geq \lceil t(p^{e+l}-1) \rceil$, we have $(F^e_*\overline{\ba^{\lceil t(p^e-1) \rceil}})(F^{e+l}_*\overline{\ba^{\lceil t (p^l-1) \rceil}}) \subseteq F^{e+l}_* \overline{\ba^{\lceil t(p^{e+l}-1) \rceil}}$.
Thus,
\begin{align*}
\varphi(ab) \in & \sum_{l \ge i}\sum_{\psi} (\varphi \circ \psi)(F^e_*\overline{\ba^{\lceil t(p^e-1) \rceil}} F^{e+l}_*(\sigma_{n-1}((R, \Delta); \ba^t)\overline{\ba^{\lceil t (p^l-1) \rceil}}))\\
\subseteq & \sum_{l \ge i}\sum_{\psi} (\varphi \circ \psi)(F^{e+l}_*(\sigma_{n-1}((R, \Delta); \ba^t)\overline{\ba^{\lceil t (p^{e+l}-1) \rceil}}))\\
\subseteq & \sigma_{i, n}((R, \Delta);\ba^t).
\end{align*}
\end{proof}

\begin{que}
Let the notation be the same as in Lemma \ref{nonFpurecontainment}.
If $n$ is sufficiently large, then does $\sigma((R, \Delta); \ba^t)$ coincide with $\sigma_n'((R, \Delta); \ba^t)$?
\end{que}

We list basic properties of non-F-pure ideals.
\begin{prop}\label{basicnonFpureideal}
Let $(R, \Delta, \ba^t)$ be a triple.
\begin{enumerate}
\item
For any effective $\R$-divisor $\Delta' \le \Delta$ on $\Spec R$, for any ideal $\ba \subseteq \ib \subseteq R$ and for any real number $s \le t$, one has
$$\sigma((R, \Delta);\ba^t) \subseteq \sigma((R, \Delta'); \ib^s).$$
If $\ba \subseteq \ib \subseteq \overline{\ba}$, then 
$$\sigma((R, \Delta);\ba^t) = \sigma((R, \Delta); \ib^t).$$

\item $\sigma((R, \Delta);\ba^t)\mathfrak{a}  \subseteq \sigma((R, \Delta);\ba^{t+1})$.

\item
Let $W$ be a multiplicatively closed subset of $R$, and let $\Delta_W$ and $\ba_W$ be the images of $\Delta$ and $\ba$ in $R_W$, respectively. Then
$$\sigma((R_W, \Delta_W); \ba_W^t)=\sigma((R, \Delta); \ba^t)R_W.$$

\item
If $R$ is locally an integral domain and if the non-strongly-F-regular locus of $\Spec R$ is contained in $V(\ba)$,
then for any $\epsilon >0$,
$$\sigma((R, \Delta);\ba^t) \subseteq \tau_b((R, (1-\epsilon)\Delta); \ba^{t-\epsilon}).$$
\item
$(R, \Delta, \overline{\ba^{\lceil t \bullet \rceil}})$ is sharply F-pure if and only if $\sigma((R,\Delta);\ba^t)=R$.

\end{enumerate}
\end{prop}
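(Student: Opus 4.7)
Parts (1), (2), (3), and (5) follow a common template: establish the statement for each $\sigma_n$ by induction on $n$, then pass to the stable value $\sigma = \sigma_n$ for $n$ sufficiently large. Part (4) is the main technical content.

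For (1), the hypotheses $\Delta' \le \Delta$, $\ba \subseteq \ib$, $s \le t$ yield at each Frobenius level the two inclusions $\Hom_R(F^e_*R(\lceil (p^e-1)\Delta\rceil), R) \subseteq \Hom_R(F^e_*R(\lceil (p^e-1)\Delta'\rceil), R)$ and $\overline{\ba^{\lceil t(p^e-1)\rceil}} \subseteq \overline{\ib^{\lceil s(p^e-1)\rceil}}$, and an induction on $n$ gives $\sigma_n((R,\Delta);\ba^t) \subseteq \sigma_n((R,\Delta');\ib^s)$; the equality when $\ba \subseteq \ib \subseteq \overline{\ba}$ reduces to $\overline{\ib^m}=\overline{\ba^m}$. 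For (2), I start from the fixed-point identity $\sigma = \sum_{e,\varphi}\varphi(F^e_*(\sigma\,\overline{\ba^{\lceil t(p^e-1)\rceil}}))$ that holds at stabilization; for $b \in \ba$ and $y \in \sigma \overline{\ba^{\lceil t(p^e-1)\rceil}}$, the $R$-linearity of $\varphi$ gives $b\cdot \varphi(F^e_*y)=\varphi(F^e_*(b^{p^e}y))$, and writing $b^{p^e}y = (by)\cdot b^{p^e-1}$ with $b^{p^e-1}\overline{\ba^{\lceil t(p^e-1)\rceil}}\subseteq\overline{\ba^{\lceil(t+1)(p^e-1)\rceil}}$ yields $\sigma\ba \subseteq \sum_{e,\varphi}\varphi(F^e_*((\sigma\ba)\overline{\ba^{\lceil(t+1)(p^e-1)\rceil}}))$; an induction on $n$ along the defining chain of $\sigma((R,\Delta);\ba^{t+1})$ then finishes. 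Part (3) is a formal consequence of $F$-finiteness: $F^e_*R$ is finitely presented, so $\Hom_R(F^e_*R(\lceil(p^e-1)\Delta\rceil),R)$ and integral closures of ideals both commute with localization, hence each $\sigma_n$ does, and choosing $n$ past the global stabilization gives the formula. For (5), $\sigma = R$ is equivalent to $\sigma_n = R$ for every $n$ (the chain is decreasing with stable value $\sigma$, while $\sigma_{n+1} = R$ is immediately propagated from $\sigma_n = R$ via the recursion), and $\sigma_1 = R$ is in turn equivalent to sharp $F$-purity: the direction sharp $F$-pure $\Rightarrow 1 \in \sigma_1$ is trivial, while the converse requires a standard single-$e$ reduction consolidating finitely many summands $\varphi_i(F^{e_i}_*a_i)$ with varying $e_i$ into a single element of $\Hom_R(F^e_*R(\lceil(p^e-1)\Delta\rceil),R)\cdot F^e_*\overline{\ba^{\lceil t(p^e-1)\rceil}}$.

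The main obstacle is (4). My plan combines three ingredients. First, Lemma \ref{nonFpurecontainment} gives $\sigma((R,\Delta);\ba^t)\subseteq\sum_{e \ge n,\varphi}\varphi(F^e_*\overline{\ba^{\lceil t(p^e-1)\rceil}})$ for arbitrarily large $n$, where $\varphi$ ranges over $\Hom_R(F^e_*R(\lceil(p^e-1)\Delta\rceil),R)$. Second, the hypothesis on the non-strongly-$F$-regular locus lets one pick $d\in\ba\cap R^\circ$ with $R_d$ strongly $F$-regular, so that $d^k$ is a big test element for $k\gg 0$; Huneke's uniform Brian\c{c}on--Skoda theorem then produces an integer $N$ with $d^k\cdot\overline{\ba^m}\subseteq\ba^{m-N}$ for all $m\ge N$. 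Third, the test ideal admits the description $\tau_b((R,(1-\epsilon)\Delta);\ba^{t-\epsilon})=\sum_{e\ge e_0,\psi}\psi(F^e_*(d^k\cdot\ba^{\lceil(t-\epsilon)p^e\rceil}))$ for some $e_0$, with $\psi$ ranging over the \emph{larger} Hom set $\Hom_R(F^e_*R(\lceil(p^e-1)(1-\epsilon)\Delta\rceil),R)$, which contains the one from the first ingredient since $(1-\epsilon)\Delta\le\Delta$. For $e$ so large that $\lceil t(p^e-1)\rceil-N\ge\lceil(t-\epsilon)p^e\rceil$, each generator $\varphi(F^e_*x)$ with $x\in\overline{\ba^{\lceil t(p^e-1)\rceil}}$ satisfies $d^k x\in\ba^{\lceil(t-\epsilon)p^e\rceil}$ by Brian\c{c}on--Skoda; the remaining step is to convert this into a genuine membership $\varphi(F^e_*x)\in\tau_b((R,(1-\epsilon)\Delta);\ba^{t-\epsilon})$ via careful manipulation of the $F^e_*R$-module structure on $\Hom$ (for instance by decomposing $d^{kp^e}x = d^k\cdot(d^{k(p^e-1)}x)$ and applying Brian\c{c}on--Skoda iteratively). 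This last translation is the main technical hurdle.
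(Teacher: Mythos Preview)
Your treatments of (1), (2), (3), and (5) are correct and match the paper's approach (the paper dispatches (1) and (3) by citing Blickle, proves (2) by the same induction $\sigma_n\ba\subseteq\sigma_n((R,\Delta);\ba^{t+1})$ you describe, and handles (5) exactly as you do).

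The gap is in (4). Your three ingredients are the right ones, but the ``translation'' step you flag as a hurdle is genuinely stuck as written. From Brian\c{c}on--Skoda you get $x\in\ba^{\lceil t(p^e-1)\rceil - N}\subseteq\ba^{\lceil(t-\epsilon)p^e\rceil + k}$ for large $e$, but this does \emph{not} give $x\in d^k\ba^{\lceil(t-\epsilon)p^e\rceil}$: the principal ideal $(d^k)$ is strictly smaller than $\ba^k$. Your proposed decomposition $d^{kp^e}x = d^k\cdot(d^{k(p^e-1)}x)$ only yields $d^k\varphi(F^e_*x)\in\tau_b$, not $\varphi(F^e_*x)\in\tau_b$, and there is no colon-ideal property of $\tau_b$ that lets you cancel the $d^k$.

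The paper's fix is to avoid the single test element entirely. Under the hypothesis that the non-strongly-$F$-regular locus lies in $V(\ba)$, one has (after localizing to a domain) an integer $m$ such that \emph{every} nonzero element of $\ba^m$ is a big test element. Since the test ideal is independent of which test element is used, summing the description $\tau_b=\sum_{e\ge n}\sum_\psi\psi(F^e_*(c\,\ba^{\lceil(t-\epsilon)p^e\rceil}))$ over all $c\in\ba^m\cap R^\circ$ gives
\[
\tau_b((R,(1-\epsilon)\Delta);\ba^{t-\epsilon})=\sum_{e\ge n}\sum_\psi\psi(F^e_*\ba^{\lceil(t-\epsilon)p^e\rceil+m}).
\]
Now the comparison is immediate: for $e$ large one has $\lceil t(p^e-1)\rceil\ge\lceil(t-\epsilon)p^e\rceil+m+k$ (with $k$ the Brian\c{c}on--Skoda constant) and $\lceil(p^e-1)\Delta\rceil\ge\lceil p^e(1-\epsilon)\Delta\rceil$, so $\overline{\ba^{\lceil t(p^e-1)\rceil}}\subseteq\ba^{\lceil(t-\epsilon)p^e\rceil+m}$ and every $\varphi$ on the $\sigma$ side is already a $\psi$ on the $\tau_b$ side. (Note also that your justification for the Hom-set inclusion needs the stronger inequality $\lceil(p^e-1)\Delta\rceil\ge\lceil p^e(1-\epsilon)\Delta\rceil$, which holds for $e$ large since $p^e-1\ge p^e(1-\epsilon)$; the bare inequality $(1-\epsilon)\Delta\le\Delta$ is not quite enough.)
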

\begin{proof}
(1) It is obvious, also see \cite[Proposition 3.2]{Bl}.

(2) By the definition of $\sigma_1((R, \Delta); \ba^t)$, we have
\begin{align*}
\sigma_1((R, \Delta);\ba^t)\mathfrak{a}&=\sum_{e \ge 1}\sum_{\varphi} \varphi(F^e_*(\overline{\ba^{\lceil t (p^e-1) \rceil}}\mathfrak{a}^{[p^e]}))\\
&\subseteq \sum_{e \ge 1}\sum_{\varphi} \varphi(F^e_*\overline{\ba^{\lceil (t+1) (p^e-1) \rceil}})\\
&= \sigma_1((R, \Delta);\ba^{t+1}).
\end{align*}
Applying this inclusion to the definition of $\sigma_2((R, \Delta);\ba^t)$, we have
\begin{align*}
\sigma_2((R, \Delta);\ba^t)\mathfrak{a}&=\sum_{e \ge 1}\sum_{\varphi} \varphi(F^e_*(\sigma_1((R, \Delta);\ba^t)\overline{\ba^{\lceil t (p^e-1) \rceil}}\mathfrak{a}^{[p^e]}))\\
&\subseteq \sum_{e \ge 1}\sum_{\varphi} \varphi(F^e_*(\sigma_1((R, \Delta);\ba^{t+1}) \overline{\ba^{\lceil (t+1) (p^e-1) \rceil}}))\\
&=\sigma_2((R, \Delta);\ba^{t+1}).
\end{align*}
Inductively we have $\sigma_n((R, \Delta);\ba^t)\mathfrak{a} \subseteq \sigma_n((R, \Delta);\ba^{t+1})$ for all integers $n \ge 1$, so that $\sigma((R, \Delta);\ba^t)\mathfrak{a} \subseteq \sigma((R, \Delta);\ba^{t+1})$.

(3)
It is immediate from \cite[Lemma 2.18]{Bl}.

(4)
We may assume that $R$ is a local domain and set $X = \Spec R$.  
It follows from \cite[p.63, Theorem]{Ho} that there exists an integer $m \ge 1$ such that every nonzero element of $\ba^m$ is a big test element for $R$.
Then $\tau_b((R, (1-\epsilon)\Delta); \ba^{t-\epsilon})$ is equal to $\sum_{e \ge n}\sum_{\psi}\psi(F^e_*\ba^{\lceil (t-\epsilon)p^e \rceil+m})$, where $n$ is an arbitrary positive integer and $\psi$ ranges over all the elements of $\Hom_R(F^e_*R(\lceil p^e(1-\epsilon)\Delta \rceil), R)$. 
By \cite{Hu}, there exists a positive integer $k$ such that $\overline{\ba^{n+k}} \subseteq \ba^n$ for all $n \ge 0$. 
We take a sufficiently large $n$ such that for all $e \geq n$, $\lceil t(p^e-1) \rceil \ge \lceil (t-\epsilon)p^e \rceil+m+k$, $\lceil (p^e-1) \Delta \rceil \ge \lceil p^e(1-\epsilon) \Delta \rceil$ and that $\sigma((R, \Delta);\ba^t)$ is contained in $\sum_{e \ge n}\sum_{\varphi}\varphi(F^e_*\overline{\ba^{\lceil t (p^e-1) \rceil}})$,  where $\varphi$ ranges over all the elements of $\Hom_R(F^e_*R(\lceil (p^e-1)\Delta \rceil), R)$.
Then
\begin{align*}
\sigma((R, \Delta);\ba^t) &\subseteq \sum_{e \ge n}\sum_{\varphi}\varphi(F^e_*\overline{\ba^{\lceil t (p^e-1) \rceil}})\\
&\subseteq \sum_{e \ge n}\sum_{\psi}\psi(F^e_*\overline{\ba^{\lceil (t-\epsilon) p^e \rceil+m+k}})\\
&\subseteq \sum_{e \ge n}\sum_{\psi}\psi(F^e_*\ba^{\lceil (t-\epsilon) p^e \rceil+m})\\
&=\tau_b((R, (1-\epsilon)\Delta); \ba^{t-\epsilon}).
\end{align*}

(5)
By the definition of sharp F-purity, $(R, \Delta, \overline{\ba^{\lceil t \bullet \rceil}})$ is sharply F-pure if and only if $\sigma_1((R,\Delta);\ba^t)=R$.
However,
$\sigma_1((R,\Delta);\ba^t)=R$ if and only if $\sigma_n((R,\Delta);\ba^t)=R$ for all  integers $n \ge 1$.
\end{proof}

\begin{rem}[Compare with \ref{maxnonlcdefsm}]
Even if $R$ is regular, the equality
$$\sigma((R, \Delta);\ba^t)= \tau_b((R, (1-\epsilon)\Delta); \ba^{t-\epsilon})$$
does not hold for any $\epsilon>0$ in general.
We give two easy examples.
\begin{enumerate}
\item
Let $R=k[x]$ be the one-dimensional polynomial ring over an $F$-finite field $k$ of characteristic $p>0$ and let $\ba=(x^p)$.
Then $\sigma(R, \ba^{\frac{1}{p}})=x$, while $\tau_b(R, \ba^{\frac{1}{p}-\epsilon})=R$ for any $\epsilon>0$.

\item Let $R=k[x, y]$ be the two-dimensional polynomial ring over an $F$-finite field $k$ of characteristic $p>0$ and let $\Delta=\Div(x^3-y^2)$.
If $p \equiv 2\text{ mod }3$, then $\sigma(R, \frac{5p-1}{6p}\Delta)=(x, y)$, while $\tau_b(R, (\frac{5p-1}{6p}-\epsilon)\Delta)=R$ for any $\epsilon>0$.
\end{enumerate}
The first example is also a counterexample to Theorem \ref{monomialcharp} below when the denominator of $t$ is divisible by $p$.

\end{rem}

\section{Non-F-pure ideals vs. non-lc ideal sheaves}
\label{secNonFpureVsNonLC}
In this section, we explore the relationship between non-F-pure ideals and non-lc ideal sheaves.

In Theorem \ref{monomialchar0}, we gave a combinatorial description of the non-lc ideal sheaf $\mJ'(X, \ba^t)$ associated to a monomial ideal $\ba$ on $X=\C^n$.
We show that the non-F-pure ideal $\sigma(R, \ba^t)$ has a similar description when $\ba$ is a monomial ideal of the polynomial ring $R=k[x_1, \dots, x_n]$ over an $F$-finite field $k$.

\begin{thm}[Compare with Theorem \ref{monomialchar0}]\label{monomialcharp}
Let $\ba$ be a monomial ideal of the polynomial ring $R:=k[x_1, \dots, x_n]$ over an $F$-finite field $k$ of characteristic $p>0$.
Let $t>0$ be a rational number whose denominator is not divisible by $p$.
Then the non-F-pure ideal $\sigma(R, \ba^t)$ is the monomial ideal generated by all monomials $x^v$ whose exponent vectors satisfy the condition that
$$v+\mathbf{1} \in P(t \cdot \ba),$$
where $P(t \cdot \ba)$ is the Newton polyhedron of $t \cdot \ba$.
\end{thm}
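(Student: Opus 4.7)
The plan is to prove the asserted equality $\sigma(R,\ba^{t})=J$, where $J$ denotes the monomial ideal of $R$ generated by those $x^{v}$ with $v+\mathbf{1}\in P(t\cdot\ba)$, by establishing the two inclusions separately.

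For $J\subseteq\sigma(R,\ba^{t})$, I will show by induction on $n$ that every generator $x^{v}$ of $J$ lies in $\sigma_{n}(R,\ba^{t})$; the base case $n=0$ is trivial because $\sigma_{0}=R$. Writing $d$ for the denominator of $t$, the hypothesis $\gcd(p,d)=1$ lets me pick $e\geq 1$ with $p^{e}\equiv 1\pmod{d}$, so that $\lceil t(p^{e}-1)\rceil=t(p^{e}-1)$ is a nonnegative integer. Set $z:=x^{(p^{e}-1)(v+\mathbf{1})}$. Since $(v+\mathbf{1})/t\in P(\ba)$, the monomial $z$ lies in $\overline{\ba^{t(p^{e}-1)}}$. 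The Frobenius trace $\Phi^{e}\colon F^{e}_{\ast}R\to R$ is a particular element of $\Hom_{R}(F^{e}_{\ast}R,R)$, and on monomials it satisfies $\Phi^{e}(F^{e}_{\ast}x^{w})=x^{(w+\mathbf{1})/p^{e}-\mathbf{1}}$ when $(w+\mathbf{1})/p^{e}\in\Z^{n}$ and is zero otherwise; taking $w=p^{e}(v+\mathbf{1})-\mathbf{1}$ yields $\Phi^{e}(F^{e}_{\ast}(x^{v}\cdot z))=x^{v}$. Combined with the inductive hypothesis $x^{v}\in\sigma_{n-1}(R,\ba^{t})$, this shows $x^{v}\in\sigma_{n}(R,\ba^{t})$; stabilization then gives $x^{v}\in\sigma(R,\ba^{t})$.

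For the reverse inclusion $\sigma(R,\ba^{t})\subseteq J$, I will combine Proposition~\ref{basicnonFpureideal}(4) with the characteristic $p$ analog of Howald's theorem for test ideals of monomial ideals due to Hara and Yoshida (see \cite{HY}). Since $R$ is regular, Proposition~\ref{basicnonFpureideal}(4) yields $\sigma(R,\ba^{t})\subseteq\tau_{b}(R,\ba^{t-\epsilon})$ for every $\epsilon>0$, and the Hara--Yoshida description says $\tau_{b}(R,\ba^{t-\epsilon})$ is the monomial ideal generated by those $x^{v}$ with $v+\mathbf{1}$ in the topological interior of $(t-\epsilon)P(\ba)$. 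Inspecting each facet inequality $\langle a_{i},\cdot\rangle\geq c_{i}$ of $P(\ba)$ (and noting $(v+\mathbf{1})_{i}\geq 1>0$ for the coordinate facets), a lattice point $v+\mathbf{1}$ satisfies $\langle a_{i},v+\mathbf{1}\rangle>(t-\epsilon)c_{i}$ for every sufficiently small $\epsilon>0$ if and only if $\langle a_{i},v+\mathbf{1}\rangle\geq tc_{i}$, equivalently $v+\mathbf{1}\in tP(\ba)$. Consequently $\bigcap_{\epsilon>0}\tau_{b}(R,\ba^{t-\epsilon})=J$, which gives the containment.

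The principal obstacle is the first inclusion: the coprimality hypothesis $\gcd(p,d)=1$ is indispensable, as it is exactly what produces infinitely many $e$ with $\lceil t(p^{e}-1)\rceil=t(p^{e}-1)$, so that the explicit lift $z$ lives in $\overline{\ba^{\lceil t(p^{e}-1)\rceil}}$. Without this assumption the explicit preimage breaks down, and indeed the example $\sigma(R,(x^{p})^{1/p})=(x)\neq R$ recorded in Remark~\ref{remonexponent}(2) shows that the denominator hypothesis cannot simply be dropped.
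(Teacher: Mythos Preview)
Your argument is correct and follows essentially the same route as the paper: the inclusion $\sigma(R,\ba^{t})\subseteq J$ is obtained in both cases from Proposition~\ref{basicnonFpureideal}(4) together with the Hara--Yoshida description of $\tau_b(R,\ba^{t-\epsilon})$, and the inclusion $J\subseteq\sigma(R,\ba^{t})$ is proved in both cases by induction on the index $n$ of $\sigma_n$, using the trace map $\Phi^e$ for $e$ with $t(p^e-1)\in\Z$ and the Newton polytope criterion for integral closure. Your induction step is marginally cleaner than the paper's, since exhibiting the single preimage $x^v\cdot z$ with $z=x^{(p^e-1)(v+\mathbf{1})}$ handles all $n$ uniformly, whereas the paper rephrases the condition $x^v\in\phi^e\big(F^e_*(I(R,\ba^t)\overline{\ba^{t(p^e-1)}})\big)$ combinatorially and checks the resulting polytope identity; but this is a presentational difference, not a substantive one.
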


\begin{proof}
We denote by $I(R, \ba^t)$ the monomial ideal generated by all monomials $x^v$ whose exponent vectors satisfy the condition that $v+\mathbf{1} \in P(t \cdot \ba)$.
It follows from \cite[Theorem 4.8]{HY} that for sufficiently small $1 \gg \epsilon>0$, the generalized test ideal $\tau_b(R, \ba^{t-\epsilon})$ coincides with $I(R, \ba^t)$.
By Proposition \ref{basicnonFpureideal} (4), $\sigma(R, \ba^t)$ is contained in $I(R, \ba^t)$.

Hence, we will prove the converse inclusion.
For each integer $e \ge 1$, let $\phi^e:F^e_*R \to R$ be the $R$-linear map
such that
$$
\phi^e(x_1^{l_1} x_2^{l_2} \dots x_n^{l_n}) = \left\{ \begin{array}{l l} 1 & \text{if } l_1 = l_2 = \ldots = l_n = p^e - 1 \\ 0 & \text{whenever $l_i \leq p^e - 1$ for all $i$ and $l_i < p^e - 1$ for some $i$.} \end{array} \right.
$$
Then the ideal $\phi^e(F^e_*\overline{\ba^{\lceil t(p^e-1) \rceil}})$
is generated by monomials, because everything involved is $\Z^n$-graded.
The monomial $x^v$ is in the ideal $\phi^e(F^e_*\overline{\ba^{\lceil t(p^e-1) \rceil}})$ if and only if
\begin{equation}
\label{monomialproof1}
p^ev \in P(\lceil t(p^e-1) \rceil \cdot \ba)-(p^e-1) \mathbf{1}.
\end{equation}
Since the denominator of $t$ is not divisible by $p$, there are infinitely many $e \in \N$ such that $t(p^e-1)$ is an integer.
For such $e$, dividing out by $p^e-1$, we can rephrase $(\ref{monomialproof1})$ into the condition that $\frac{p^e}{p^e-1}v+\mathbf{1} \in P(t \cdot \ba)$.
By taking a sufficiently large $e$, this is equivalent to saying that $v+\mathbf{1} \in P(t \cdot \ba)$.
Thus, by the definition of $\sigma_1(R, \ba^t)$, the ideal $I(R, \ba^t)$ is contained in $\sigma_1(R, \ba^t)$.

Similarly, for each $e \ge 1$, the monomial $x^v$ is in $\phi^e(F^e_*(\sigma_1(R, \ba^t)\overline{\ba^{\lceil t(p^e-1) \rceil}}))$ if it is in $\phi^e(F^e_*(I(R, \ba^t)\overline{\ba^{\lceil t(p^e-1) \rceil}}))$ which happens if and only if
\begin{equation}
\label{monomialproof2}
p^ev \in (P(t \cdot \ba)-\mathbf{1})\cap \Z_{\ge 0}^{n}+P(\lceil t(p^e-1) \rceil \cdot \ba)-(p^e-1) \mathbf{1}.
\end{equation}
We will show that for all sufficiently large $e$ such that also $t(p^e-1)$ is an integer, Equation $(\ref{monomialproof2})$ is equivalent to the condition that $v+\mathbf{1} \in P(t \cdot \ba)$, that is, $x^v \in I(R, \ba^t)$. 
First suppose that $(\ref{monomialproof2})$ holds for such $e$. In particular, 
$$p^ev \in (P(t \cdot \ba)-\mathbf{1})+P(t(p^e-1) \cdot \ba)-(p^e-1)\mathbf{1}
=  p^e P(t \cdot \ba)-p^e \mathbf{1}.$$
Dividing out by $p^e$, we see that $v+\mathbf{1} \in P(t \cdot \ba)$. 
Conversely, suppose that $v+\mathbf{1} \in P(t \cdot \ba)$. 
Since $v$ is in $\Z^n_{\ge 0}$, this can be rephrased to say that $v \in (P(t \cdot \ba)-\mathbf{1})\cap \Z_{\ge 0}^{n}$. 
Multiplying both sides by $p^e-1$, we have that  
$$(p^e-1)v \in (p^e-1) (P(t \cdot \ba)-\mathbf{1})=P(\lceil t(p^e-1)\rceil \cdot \ba)-(p^e-1)\mathbf{1}$$
for all $e$ such that $t(p^e-1)$ is an integer. Finally,  we see that for such $e$, 
\begin{align*}
p^e v&=v+(p^e-1)v\\
&=(P(t \cdot \ba)-\mathbf{1})\cap \Z_{\ge 0}^{n}+P(\lceil t(p^e-1) \rceil \cdot \ba)-(p^e-1) \mathbf{1}.
\end{align*}

So, what it comes down to is that 
$$I(R, \ba^t) \subseteq \phi^e(F^e_*(\sigma_1(R, \ba^t)\overline{\ba^{\lceil t(p^e-1) \rceil}})) \subseteq \sigma_2(R, \ba^t).$$
Inductively we conclude that $I(R, \ba^t)$ is contained in $\sigma(R, \ba^t)$, which completes the proof of Theorem \ref{monomialcharp}.
\end{proof}

\begin{thm}\label{Fpure=>lc}
Let $R$ be an $F$-finite normal ring of characteristic $p>0$ and $\Delta$ be an effective $\R$-divisor on $X:=\Spec R$ such that $K_X+\Delta$ is $\R$-Cartier.
Let $\ba \subseteq R$ be a nonzero ideal and $t >0$ be a real number.
If $f:\widetilde{X} \to X$ is a proper birational morphism from a normal scheme $\widetilde{X}$ such that $\ba \cO_{\widetilde{X}}=\cO_{\widetilde{X}}(-Z)$ is invertible and $K_{\widetilde{X}}+\Delta_{\widetilde{X}}=f^*(K_X+\Delta)+tZ$, then one has an inclusion
$$\sigma((R, \Delta);\ba^t) \subseteq H^0(\widetilde{X}, \cO_{\widetilde{X}}(\lceil K_{\widetilde{X}}- f^*(K_X+\Delta)-tZ+\varepsilon F \rceil))$$
for sufficiently small $0 \le \varepsilon \ll 1$, where $F=\mathrm{Supp}\; \Delta_{\widetilde{X}}^{\ge 1}$.
\end{thm}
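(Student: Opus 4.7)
The plan is to reduce the claimed inclusion to a collection of valuation-theoretic bounds on $\widetilde X$, and then verify those bounds by tracing each defining $\varphi$-map of $\sigma$ over to $\widetilde X$ and applying the Frobenius-trace estimate at the generic point of each prime divisor.

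First, using the hypothesis $K_{\widetilde X}+\Delta_{\widetilde X}=f^*(K_X+\Delta)+tZ$, the right-hand side rewrites as
\[
H^0(\widetilde X,\cO_{\widetilde X}(\lceil -\Delta_{\widetilde X}+\varepsilon F\rceil)),
\]
which is the natural ``maximal non-lc''-type sheaf on $\widetilde X$ in the sense of Definition \ref{def-a}. A direct case analysis on $\mathrm{coeff}_E(\Delta_{\widetilde X})$ shows that, for $\varepsilon$ sufficiently small, an element $c\in R$ lies in this sheaf if and only if $\mathrm{ord}_E(c)>\mathrm{coeff}_E(\Delta_{\widetilde X})-1$ for every prime divisor $E\subset\widetilde X$ with $\mathrm{coeff}_E(\Delta_{\widetilde X})>1$; all other $E$ impose no nontrivial condition on $c\in R$ (divisors with $\mathrm{coeff}_E(\Delta_{\widetilde X})\le 0$ are automatic; coefficients strictly between $0$ and $1$ give $\mathrm{ord}_E(c)\geq 0$; and coefficients equal to $1$ become trivial once $\varepsilon>0$).

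Second, using Lemma \ref{nonFpurecontainment} together with the stabilization of the chain $\{\sigma_n\}$, we have $\sigma((R,\Delta);\ba^t)\subseteq\sum_{e\ge N}\sum_\varphi\varphi(F^e_*\overline{\ba^{\lceil t(p^e-1)\rceil}})$ for arbitrarily large $N$, with $\varphi$ ranging over $\Hom_R(F^e_*R(\lceil(p^e-1)\Delta\rceil),R)$. Since only finitely many prime divisors $E\subset\widetilde X$ satisfy $\mathrm{coeff}_E(\Delta_{\widetilde X})>1$, it suffices to produce, for each such $E$ and each sufficiently large $e$, a uniform lower bound
\[
\mathrm{ord}_E(\varphi(F^e_*x)) \ \ge\ \mathrm{coeff}_E(\Delta_{\widetilde X})-1+\delta(e)
\]
with $\delta(e)>0$, valid for all $\varphi$ and all $x\in\overline{\ba^{\lceil t(p^e-1)\rceil}}$. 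Because $\mathrm{ord}_E(\varphi(F^e_*x))$ is an integer, any positive $\delta(e)$ upgrades automatically to the strict inequality we need.

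The heart of the argument is the valuation estimate at a fixed $E$. Via the reflexive duality
\[
\sHom_X(F^e_*\cO_X(D),\cO_X)\cong F^e_*\cO_X((1-p^e)K_X-D),
\]
any $\varphi\in\Hom_R(F^e_*R(\lceil(p^e-1)\Delta\rceil),R)$ corresponds to an effective Weil divisor $D_\varphi\ge\lceil(p^e-1)\Delta\rceil$ on $X$ with $D_\varphi+(p^e-1)K_X+\lceil(p^e-1)\Delta\rceil\sim 0$. Localizing $\varphi$ at the generic point $\eta_E$ yields an $A$-linear map $\widetilde\varphi_A:F^e_*A\to A$ on the DVR $A=\cO_{\widetilde X,\eta_E}$. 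Pulling the linear equivalence for $D_\varphi$ back to $\widetilde X$ via the discrepancy formula $K_{\widetilde X}+\Delta_{\widetilde X}=f^*(K_X+\Delta)+tZ$ bounds the $\mathrm{ord}_E$-valuation of $\widetilde\varphi_A$ on a monomial $A$-basis of the rank-$p^e$ module $F^e_*A$ in terms of $\mathrm{coeff}_E(\Delta_{\widetilde X})$ and $\mathrm{coeff}_E(Z)$. Combining this with the integral-closure inequality $\mathrm{ord}_E(x)\ge\lceil t(p^e-1)\rceil\mathrm{coeff}_E(Z)$ and the standard Frobenius-trace lemma on $A$ produces the desired bound $\mathrm{ord}_E(\varphi(F^e_*x))>\mathrm{coeff}_E(\Delta_{\widetilde X})-1$ for all sufficiently large $e$.

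The main obstacle is this valuation estimate. The principal difficulty is that $K_X+\Delta$ is assumed only $\R$-Cartier, so $(p^e-1)(K_X+\Delta)$ need not be Cartier; consequently, one must work with Weil divisors throughout and carefully track the integrality errors that arise from round-up operations and from exceptional correction terms when pulling back to $\widetilde X$. These errors are each $O(1)$ in absolute magnitude and thus $O(1/p^e)$ after the $p^e$-scaling inherent to the Frobenius trace, so they are absorbed by letting $e\to\infty$ — exactly the freedom afforded by Lemma \ref{nonFpurecontainment}.
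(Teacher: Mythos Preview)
Your approach is essentially the paper's: both reduce via Lemma \ref{nonFpurecontainment} to showing that each individual term $\varphi(F^e_*x)$ with $e$ large lies in the target, and both verify this by translating $\varphi$ into a divisor via Grothendieck duality and comparing valuations along each prime divisor on $\widetilde X$. The paper packages the valuation argument globally (producing a divisor $D$ on $\widetilde X$ and bounding the coefficients of $D - q f^{-1}_*\Div_X(c)$ by $q-1$ via a splitting argument on the regular locus), while you localize at each DVR $\cO_{\widetilde X,\eta_E}$ and invoke the Frobenius-trace estimate there; these are the same computation.

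There is, however, a small but genuine error in your valuation bookkeeping. Your characterization ``$\mathrm{ord}_E(c)>d_E-1$'' of membership in the target is too strong when $d_E:=\mathrm{coeff}_E(\Delta_{\widetilde X})$ is an integer $\ge 2$: there the correct condition is $\mathrm{ord}_E(c)\ge d_E-1$ with equality permitted. Correspondingly, the bound $\mathrm{ord}_E(\varphi(F^e_*x))\ge d_E-1+\delta(e)$ with $\delta(e)>0$ that you set out to prove is in general \emph{false}: the trace estimate on the DVR yields only $\mathrm{ord}_E(\varphi(F^e_*x))\ge\frac{p^e-1}{p^e}(d_E-1)$, i.e.\ $\delta(e)=-(d_E-1)/p^e<0$. (Try $A$ a DVR, $\Delta=2[\mathfrak m]$, $\ba=A$: one computes $\sigma=\mathfrak m$, the target is also $\mathfrak m$, and $\mathrm{ord}_{\mathfrak m}$ attains exactly $1=d_E-1$, never more.) The fix is precisely what the paper does: prove the non-strict bound with $\delta(e)\to 0^-$, then use integrality of $\mathrm{ord}_E$ together with the freedom to take $e$ arbitrarily large (granted by Lemma \ref{nonFpurecontainment}) to conclude $\mathrm{ord}_E(c)\ge d_E-1$. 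Since $\mathrm{ord}_E(c)$ is an integer, this single inequality is exactly the membership condition in both the integer and non-integer cases. (Minor aside: in your duality formula, if $D_\varphi\ge\lceil(p^e-1)\Delta\rceil$ records $\varphi$ as a section of $\cO_X((1-p^e)K_X)$, then the linear equivalence is $D_\varphi+(p^e-1)K_X\sim 0$; the extra $\lceil(p^e-1)\Delta\rceil$ should not appear.)
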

\begin{proof}
The proof is very similar to those of \cite[Theorem 3.3]{HW} and \cite[Theorem 2.13]{Ta}.

We may assume that $R$ is local.
Let $c \in \sigma((R,\Delta);\ba^t)$.
By Lemma \ref{nonFpurecontainment}, we may assume that there exist a sufficiently large $q=p^e$, a nonzero element $a \in \overline{\ba^{\lceil t(q-1) \rceil}}$ and an $R$-linear map $\varphi:F^e_*R(\lceil (q-1)\Delta \rceil) \to R$ sending $a$ to $c$.
The map $\varphi$ induces an $R$-linear map $\varphi':F^e_*R(\lceil (q-1)\Delta \rceil+\Div_X(a)) \to R(\Div_X(c))$ sending $1$ to $1$.
Since
\begin{align*}
& \sHom_R(F^e_*R(\lceil (q-1)\Delta \rceil+\Div_X(a)), R(\Div_X(c))) \\
\cong & F^e_*\cO_X((1-q)K_X+q \Div_X(c)-\lceil (q-1)\Delta \rceil-\Div_X(a))
\end{align*}
by Grothendieck duality,
we can regard $\varphi'$ as a rational section of $\cO_{\widetilde{X}}((1-q)K_{\widetilde{X}}+q f^{-1}_*\Div_X(c))$.
Let $D$ be the divisor on $\widetilde{X}$ corresponding to $\varphi'$.
Then $D$ is linearly equivalent to $(1-q)K_{\widetilde{X}}+q f^{-1}_*\Div_X(c)$ and $f_*D \geq \lceil (q-1)\Delta \rceil +\Div_{X}(a)$.

Set $Y:=\widetilde{X} \setminus \mathrm{Supp}\; D^{<0}$.
Since $\mathrm{Supp}\; D^{<0}$ is supported on the exceptional locus, $\varphi'$ lies in the global section of 
$$F^e_*\cO_Y((1-q)K_Y+q f^{-1}_*\Div_X(c)|_Y) \cong F^e_*\sHom_{\cO_Y}(F^e_*\cO_Y, \cO_Y(f^{-1}_*\Div_X(c)|_Y)).$$
We will prove that the coefficient of $D-qf^{-1}_*\Div_X(c)$ in each irreducible component is less than or equal to $q-1$.
Assume to the contrary that there exists an irreducible component $D_0$ of $D$ whose coefficient  is greater than or equal to $q(\mathrm{ord}_{D_0}(f^{-1}_*\Div_X(c))+1)$, where $\mathrm{ord}_{D_0}$ denotes the order along $D_0$.
Note that $D_0$ intersects $Y$.
Set $B:=f^{-1}_*\Div_X(c)-\mathrm{ord}_{D_0}(f^{-1}_*\Div_X(c))D_0$.
Then $\varphi'$ lies in the global section of 
$$F^e_*\cO_Y((1-q)K_Y+qB|_Y-qD_0|_Y) \cong F^e_*\sHom_{\cO_Y}(F^e_*\cO_Y(qD_0|_Y), \cO_Y(B|_Y)).$$
Combining $\varphi'$ with the natural inclusion map $\cO_Y(D_0|_Y) \hookrightarrow F^e_*\cO_Y(qD_0|_Y)$,  we obtain the $\cO_Y$-linear map $\psi:\cO_Y(D_0|_Y) \to \cO_Y(B|_Y)$.
Since $\varphi'$ sends $1$ to $1$, $\psi$ also sends $1$ to $1$.
However, since $D_0$ is not contained in $\Supp B$, this is a contradiction.
Thus, every coefficient of $D-qf^{-1}_*\Div_X(c)$ is less than or equal to $q-1$.

Now we put $G:=\frac{1}{q-1}D-f^{-1}_*\Delta$.
Then $G$ is $\Q$-linearly equivalent to $-(K_{\widetilde{X}}+f^{-1}_*\Delta)+\frac{q}{q-1}f^{-1}_*\Div_X(c)$, so that $f_*G$ is $\Q$-linearly equivalent to $-(K_X+\Delta)+\frac{q}{q-1}\Div_X(c)$.
There are finitely many prime divisors $E_j$ on $\widetilde{X}$ such that
$$K_{\widetilde{X}} \underset{\text{$\R$-lin.}}{\sim} f^*(K_X+\Delta)+tZ+\sum_j a_jE_j$$
where $a_j$ are real numbers chosen as $f^{-1}_*\Delta+tZ+\sum_j a_jE_j$ is $f$-exceptional.
Hence, one has
$$G-f^*f_*G+f^{-1}_*\Delta+tZ+\frac{q}{q-1}\left(\Div_{\widetilde{X}}(c)-f^{-1}_*\Div_X(c)\right)+\sum_j a_jE_j=0,$$
because it is an $f$-exceptional divisor $\R$-linearly equivalent to zero.
On the other hand,
$$f_*G \ge \frac{1}{q-1}\lceil (q-1)\Delta \rceil+\frac{1}{q-1}\Div_X(a)- \Delta \ge \frac{1}{q-1}\Div_X(a).$$
Putting it all together, we obtain
\begin{align*}
-a_j=&\mathrm{ord}_{E_j}\left(G-f^*f_*G+f^{-1}_*\Delta+tZ+\frac{q}{q-1}\left(\Div_{\widetilde{X}}(c)-f^{-1}_*\Div_X(c)\right)\right) \\
\le & \frac{1}{q-1}\mathrm{ord}_{E_j}\left(D-\Div_{\widetilde{X}}(a)+t(q-1)Z-qf^{-1}_*\Div_X(c)\right)+\frac{q}{q-1}\nu_{E_j}(c)\\
\le & \frac{1}{q-1}\mathrm{ord}_{E_j}\left(D-qf^{-1}_*\Div_X(c)\right)+\frac{q}{q-1}\nu_{E_j}(c)\\
 \le & 1+\frac{q}{q-1}\nu_{E_j}(c),
\end{align*}
where $\mathrm{ord}_{E_j}$ denotes the order along $E_j$ and $\nu_{E_j}$ is the valuation corresponding to $E_j$.
Since $q$ is sufficiently large, we conclude that  $a_j+ v_{E_j}(c)  \ge -1$ for all $j$, which implies that $c$ lies in $H^0(\widetilde{X}, \cO_{\widetilde{X}}(\lceil K_{\widetilde{X}}- f^*(K_X+\Delta)-tZ  +\varepsilon F \rceil))$ for sufficiently small $0 \le \varepsilon \ll 1$.
\end{proof}

\begin{conj}
Let $R$ be a normal ring essentially of finite type over a field of characteristic zero, and let $\Delta$ be an effective $\Q$-divisor on $X:=\Spec R$ such that $K_X+\Delta$ is $\Q$-Cartier.
Let $\ba \subseteq R$ be a nonzero ideal and $t >0$ be a real number.
Denoting by $(R_p, \Delta_p, \ba_p)$ the reduction modulo $p$ of the triple $(R, \Delta, \ba)$ and by $\mJ'((X, \Delta); \ba^t)_p$ that of the maximal non-lc ideal $\mJ'((X, \Delta); \ba^t)$,
one has
$$\mJ'((X, \Delta); \ba^t)_p=\sigma((R_p, \Delta_p); \ba_p^t)$$
for infinitely many primes $p$.
\end{conj}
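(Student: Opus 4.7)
The plan is to follow the template of the Hara--Yoshida--Takagi correspondence between multiplier ideals and (big generalized) test ideals, modified to handle the maximal-versus-minimal distinction on the non-lc side. Fix a log resolution $f \colon Y \to X$ of $(X, \Delta, \ba)$ in characteristic zero with $\ba\cO_Y = \cO_Y(-Z)$ and $K_Y + \Delta_Y = f^*(K_X+\Delta)$, so that by definition
\[
\mJ'((X,\Delta);\ba^t) = f_*\cO_Y\bigl(-\lfloor \Delta_Y + tZ\rfloor + \sum_{k\ge 1}{}^k\!(\Delta_Y + tZ)\bigr).
\]
Spread out $X$, $\Delta$, $\ba$, and $f$ over a finitely generated $\Z$-subalgebra $A$ of the base field. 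For all closed points of $\Spec A$ outside a proper closed subset (hence for infinitely many primes $p$), the reduction $f_p\colon Y_p \to X_p$ remains a log resolution of $(X_p, \Delta_p, \ba_p)$, and by Theorem \ref{vani2} together with standard openness of cohomological vanishing in flat families, the formation of the right-hand side above commutes with reduction modulo $p$.

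The easy containment $\sigma((R_p,\Delta_p);\ba_p^t) \subseteq \mJ'((X,\Delta);\ba^t)_p$ is essentially Theorem \ref{Fpure=>lc} applied to $f_p$: that theorem places $\sigma$ inside $H^0(Y_p,\cO_{Y_p}(\lceil K_{Y_p} - f_p^*(K_{X_p} + \Delta_p) - tZ_p + \varepsilon F\rceil))$, which by the setup coincides with $\mJ'((X,\Delta);\ba^t)_p$ for every prime of good reduction. The reverse containment is the substance of the conjecture. Given a local section $c$ of $\mJ'_p$, the aim is to construct, for arbitrarily large $e$, an $R_p$-linear map $\varphi\colon F^e_*R_p(\lceil (p^e-1)\Delta_p\rceil)\to R_p$ and an element $a\in \overline{\ba_p^{\lceil t(p^e-1)\rceil}}$ with $\varphi(a)=c$, and then to iterate this construction so that $c$ lies in every member of the stabilizing chain $\sigma_1\supseteq \sigma_2\supseteq\cdots$, hence in $\sigma$. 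Such a $\varphi$ is produced by lifting $c$ to a section on $Y_p$ via Grothendieck duality on $f_p$ and then pushing the result down.

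The main obstacle lies in the push-down step: it requires a characteristic-$p$ analog not merely of Kawamata--Viehweg vanishing, but of the Ambro--Fujino cohomological package of Theorem \ref{ap1}, because it is precisely that package, through its torsion-freeness and vanishing for simple normal crossings pairs with coefficient-$1$ strata, that captures the refinement of $\mJ'$ over the ordinary multiplier ideal. In characteristic zero this package is Hodge-theoretic, and it has no unconditional positive-characteristic analog. The strategy, and the reason the conjecture is stated only for infinitely many $p$, is to invoke a Deligne--Illusie style lifting-mod-$p^2$ argument on the positive-density set of primes over which the simple normal crossings pair $(Y,\Delta_Y^{=1})$ admits a compatible $W_2$-lift together with a lift of Frobenius, thereby deducing the necessary Koll\'ar-type torsion-freeness and vanishing on $Y_p$. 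Carrying out this Hodge-theoretic reduction in the exact form demanded by Theorem \ref{ap1}, and with an auxiliary ideal $\ba$ in play, is the central technical challenge, and to our knowledge it is open already in simpler settings without $\ba$.
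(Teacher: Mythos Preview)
The statement you are addressing is a \emph{conjecture} in the paper; the paper offers no proof of it. The only thing the paper establishes is the easy containment $\sigma((R_p,\Delta_p);\ba_p^t)\subseteq \mJ'((X,\Delta);\ba^t)_p$ for all sufficiently large $p$, as a direct consequence of Theorem~\ref{Fpure=>lc} (this is the remark immediately following the conjecture). Your proposal correctly isolates this easy direction and attributes it to the same theorem.

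As for the reverse inclusion, you are right that this is the genuine content of the conjecture, and you are also candid that your proposed route---a Deligne--Illusie lifting argument to export the Ambro--Fujino torsion-free/vanishing package of Theorem~\ref{ap1} to characteristic $p$---is not known to work. That is an honest assessment: there is currently no proof along these (or any other) lines, and the paper does not claim one. So your write-up is not a proof but an outline of a plausible attack together with an identification of the obstruction. In that sense there is no discrepancy with the paper to report; rather, you should be aware that what you have written is a discussion of strategy for an open problem, not a proof to be compared against anything.
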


\begin{rem}
It follows from Theorem \ref{Fpure=>lc} that $\sigma((R_p, \Delta_p); \ba_p^t)$ is contained in $\mJ'((X, \Delta); \ba^t)_p$ for all sufficiently large primes $p$.
However, the converse inclusion does not hold for all sufficiently large primes $p$ in general.
For example, let $E \subseteq \mathbb{P}^2_{\Q}$ be an elliptic curve over the rational numbers and $X=\Spec R$ be the affine cone over $E$.
Since $X$ has only log canonical singularities, $\mJ'(X, 0)=R$.
On the other hand, $\sigma(R_p, 0)=R_p$ if and only if $p$ is not supersingular prime for $E$.
It is known by Elkies \cite{El} that there are infinitely many supersingular primes for $E$.
Hence, it cannot happen that $\mJ'(X, 0)_p=\sigma(R_p, 0)$ for all sufficiently large primes $p$.
The reader is referred to \cite[Example 4.6]{MTW} for a more detailed explanation.
\end{rem}

\section{The restriction theorem for non-F-pure ideals}\label{secResThmForNonFpure}

In this section, we formulate the restriction theorem for non-F-pure ideals when $\ba$ is the unit ideal.

For simplicity, we may assume that $R$ is an $F$-finite normal local ring of characteristic $p>0$, and set $X=\Spec R$.
Then there exists a bijection of sets:
\begin{equation*}
\small
\left\{ \begin{matrix}\text{Effective $\Q$-divisors $\Delta$ on $X$ such }\\\text{that $(p^e - 1)(K_X + \Delta)$ is Cartier}\end{matrix} \right\} \leftrightarrow \left\{ \text{Nonzero elements of $\Hom_{R}(F^e_* R, R)$} \right\} \Big/ \sim
\end{equation*}
where the equivalence relation on the right hand side identifies two maps $\phi_1, \phi_2 \in \Hom_{R}(F^e_* R, R)$ if there exists some unit $u \in R$ such that $\phi_1(x) = \phi_2(ux)$.
The reader is referred to \cite{Sc2} for the details of this correspondence.

Given a map $\phi \in \Hom_{R}(F^e_* R, R)$ and an integer $l \ge 1$, the $l^{\rm th}$ iteration $\phi^l$ of $\phi$ is defined as follows:
$$\phi^l=\phi \circ (F^e_*\phi) \circ \dots \circ (F^{(l-1)e}_*\phi) \in  \Hom_{R}(F^{le}_* R, R).$$
We remark that if $\phi$ corresponds to some effective $\Q$-divisor $\Delta$ on $X$ such that $(p^e - 1)(K_X + \Delta)$ is Cartier, then $\phi^l$ corresponds to the same divisor $\Delta$ for every $l \in \N$.
This is equivalent to saying that if $\Hom_{R}(F^{e}_* R((p^e-1)\Delta), R)$ is a free $F^e_*R$-module generated by $\phi$, then $\Hom_{R}(F^{le}_* R((p^{le}-1) \Delta), R)$ is a free $F^{le}_*R$-module generated by $\phi^l$ for every $l \in \N$.

\begin{lem}\label{QCartiernonFpureideal}
Let $(R, \m)$ be an $F$-finite normal local ring of characteristic $p>0$ and $\Delta$ be an effective $\Q$-divisor on $X:=\Spec R$ such that $(p^{e_0}-1)(K_X+\Delta)$ is Cartier for some $e_0 \in \N$.
Let $\phi_{e_0}:F^{e_0}R \to R$ be the $R$-linear map corresponding to $\Delta$.
Then for all sufficiently large $l \in \N$, one has
$$\sigma(R, \Delta)=\phi_{e_0}^l(F^{le_0}_*R).$$
\end{lem}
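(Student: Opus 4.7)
My plan is to prove the equality $\sigma(R, \Delta) = \phi_{e_0}^l(F^{le_0}_* R)$ for $l \gg 0$ by identifying both sides as the stable value of a descending chain of ideals. By Proposition \ref{basicnonFpureideal}(3) I may localize and assume $(R, \m)$ is local. Set $I_l := \phi_{e_0}^l(F^{le_0}_* R)$. The factorization $\phi_{e_0}^{l+1} = \phi_{e_0} \circ F^{e_0}_* \phi_{e_0}^l$ gives $I_{l+1} = \phi_{e_0}(F^{e_0}_* I_l) \subseteq \phi_{e_0}(F^{e_0}_* I_{l-1}) = I_l$, so $\{I_l\}$ is a descending chain and by Noetherianity stabilizes at some $I_\infty$ with $\phi_{e_0}(F^{e_0}_* I_\infty) = I_\infty$.

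For the containment $I_\infty \subseteq \sigma(R, \Delta)$, the Cartier hypothesis implies that for every $l$ the module $\Hom_R(F^{le_0}_* R((p^{le_0}-1)\Delta), R)$ is free of rank one over $F^{le_0}_* R$ with generator $\phi_{e_0}^l$, as recorded just before the statement of the lemma. In particular $\phi_{e_0}^l$ is one of the maps $\varphi$ used in the definition of $\sigma_l(R, \Delta)$, and a straightforward induction gives $I_l \subseteq \sigma_l(R, \Delta)$ for every $l$, whence $I_\infty \subseteq \sigma(R, \Delta)$ after passing to the limit.

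For the reverse containment $\sigma(R, \Delta) \subseteq I_\infty$, my strategy is to show that $\sigma(R, \Delta)$ is itself a fixed ideal of the operator $J \mapsto \phi_{e_0}(F^{e_0}_* J)$ on the ideals of $R$: then iterating yields $\sigma(R, \Delta) = \phi_{e_0}^l(F^{le_0}_* \sigma(R, \Delta)) \subseteq I_l$ for every $l$, and so $\sigma(R, \Delta) \subseteq I_\infty$. The inclusion $\phi_{e_0}(F^{e_0}_* \sigma(R, \Delta)) \subseteq \sigma(R, \Delta)$ is immediate, since $\phi_{e_0}$ contributes one summand to the stability identity $\sigma(R, \Delta) = \sum_{e,\varphi} \varphi(F^e_* \sigma(R, \Delta))$ and each summand lies in $\sigma(R, \Delta)$. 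For the reverse inclusion one must show that each summand $\varphi(F^e_* \sigma(R, \Delta))$ lies in $\phi_{e_0}(F^{e_0}_* \sigma(R, \Delta))$. When $e = ke_0$ is a multiple of $e_0$, this follows from the generator relation $\varphi = \phi_{e_0}^k(s \cdot -)$ together with the iterated inclusion $\phi_{e_0}^{k-1}(F^{(k-1)e_0}_*(s\sigma(R, \Delta))) \subseteq \sigma(R, \Delta)$, established by induction on $k$.

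The technical heart of the argument is handling summands indexed by $e$ that is not a multiple of $e_0$, since there the Hom-module no longer has the clean generator $\phi_{e_0}^{e/e_0}$. To absorb these, I would combine Lemma \ref{nonFpurecontainment} (applied with $\ba = R$), which for every $m$ expresses $\sigma(R, \Delta)$ as contained in $\sum_{e > m}\sum_{\varphi} \varphi(F^e_* \sigma(R, \Delta))$, with the composition structure of the Cartier algebra: for any $\varphi \in \Hom_R(F^e_* R((p^e-1)\Delta), R)$, the iterate $\varphi^n$ lies in $\Hom_R(F^{ne}_* R((p^{ne}-1)\Delta), R)$, and the choice $n = e_0/\gcd(e, e_0)$ makes $ne$ divisible by $e_0$, whereupon $\varphi^n = \phi_{e_0}^{ne/e_0}(s \cdot -)$ for some $s$ by the generator property. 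Feeding this factorization back into the large-$m$ reduction should absorb every non-$e_0$-multiple summand into $\phi_{e_0}(F^{e_0}_* \sigma(R, \Delta))$; this book-keeping, in particular verifying that the composition inequalities for $\lceil (p^e - 1)\Delta\rceil$ cooperate with the twists appearing in Lemma \ref{nonFpurecontainment}, is where I expect the main technical obstacle.
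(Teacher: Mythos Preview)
Your inclusion $I_l \subseteq \sigma_l(R,\Delta)$ is fine and matches the paper. The gap is in the reverse direction, specifically in your treatment of maps $\varphi$ at levels $e$ that are not multiples of $e_0$. Iterating $\varphi$ to $\varphi^n$ with $ne$ a multiple of $e_0$ gives you a factorization of $\varphi^n$, not of $\varphi$; knowing $\varphi^n = \phi_{e_0}^{ne/e_0}(s\cdot -)$ tells you only that $\varphi^n(F^{ne}_*\sigma) \subseteq \phi_{e_0}(F^{e_0}_*\sigma)$, which is the wrong direction of containment to bound $\varphi(F^e_*\sigma)$ from above. There is no evident way to ``feed this back'' into the large-$m$ reduction and recover the desired inclusion.

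The point you are missing is that the factorization through $\phi_{e_0}^l$ already holds for \emph{every} $e \ge le_0$, not just for multiples of $e_0$: this is \cite[Corollary 3.10]{Sc2}, which says that any $\varphi \in \Hom_R(F^e_*R(\lceil (p^e-1)\Delta\rceil), R)$ with $e \ge le_0$ can be written as $\varphi = \phi_{e_0}^l \circ F^{le_0}_*\psi$ for some $\psi \in \Hom_R(F^{e-le_0}_*R(\lceil (p^{e-le_0}-1)\Delta\rceil), R)$. Granting this, the paper's argument is a single line: from Lemma~\ref{nonFpurecontainment} one has $\sigma(R,\Delta) \subseteq \sum_{e \ge le_0}\sum_\varphi \varphi(F^e_*R)$, and each summand satisfies $\varphi(F^e_*R) = \phi_{e_0}^l\bigl(F^{le_0}_*\psi(F^e_*R)\bigr) \subseteq \phi_{e_0}^l(F^{le_0}_*R) = I_l$. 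There is no need to show $\sigma(R,\Delta)$ is $\phi_{e_0}$-fixed or to separate out the multiple-of-$e_0$ case.
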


\begin{proof}
Since $\phi_{e_0}(F^{e_0}_*R) \subseteq \sigma_1(R, \Delta)$, we have $\phi_{e_0}^2(F^{2e_0}_*R) \subseteq \sigma_2(R, \Delta)$ by the definition of $\sigma_2(R, \Delta)$.
Inductively we have $\phi_{e_0}^n(F^{ne_0}_*R) \subseteq \sigma_n(R, \Delta)$ for all $n \in \N$.
It then follows from Lemma \ref{nonFpurecontainment} that for all sufficiently large $l \in \N$,
$$\phi_{e_0}^l(F^{le_0}_*R) \subseteq \sigma(R, \Delta) \subseteq \sum_{e \ge le_0}\sum_{\varphi} \varphi(F^e_*R),$$
where $\varphi$ ranges over all elements of $\Hom_R(F^e_*R(\lceil (p^e-1)\Delta \rceil), R)$.
Hence, it suffices to show that $\sum_{e \ge le_0}\sum_{\varphi} \varphi(F^e_*R) \subseteq \phi_{e_0}^l(F^{le_0}_*R)$.
Let $\varphi \in \Hom_R(F^e_*R(\lceil (p^e-1) \Delta \rceil), R)$ with $e \ge le_0$.
Then by \cite[Corollary 3.10]{Sc2}, there exists an $R$-linear map $\psi_i: F^i_*R(\lceil (p^i-1) \Delta \rceil) \to R$ such that $\varphi=\phi^{l}_{e_0} \circ (F^{le_0}_*\psi_i)$ with $i=e-le_0$.
Thus, one has
$$\varphi(F^{e}_*R)=\phi^{l}_{e_0}((F^{le_0}_*\psi_i)(F^{e}_*R))\subseteq \phi^{l}_{e_0}(F^{le_0}_*R).$$
\end{proof}

\begin{rem}\label{computationFpure}
In the case of a regular ring $R$, we have the following description of non-F-pure ideals:
let $\Delta=t \cdot \Div(f)$ be an effective $\Q$-divisor on $\Spec R$ such that the denominator of $t$ is not divisible by $p$.
If $J \subseteq R$ is an ideal, one defines $J^{[1/p^e]}$ to be the smallest ideal $I$ such that $I^{[p^e]} \supseteq J$.
 Then by Lemma \ref{QCartiernonFpureideal} and \cite[Proposition 3.10]{BSTZ}, one has
 $$\sigma(R, \Delta)=(f^{t (p^e-1)})^{[1/p^e]}$$
for sufficiently large and divisible $e$ such that $t(p^e-1)$ is an integer.
For example, let $R=\F_p[x, y]$ be the two-dimensional polynomial ring over $\F_p$ and let $\Delta=\Div(x^3-y^2)$.
Note that $\{x^iy^j\}_{p^e-1\ge i, j \ge 0}$ is a basis of $R$ over $R^{p^e}$ for all $e \in \N$.
Then by \cite[Proposition 2.5]{BMS}, taking a sufficiently large $e$, one has
$$\sigma(R, \Delta)=\left((x^3-y^2)^{p^e-1}\right)^{[1/p^e]}=(x, y).$$
\end{rem}

\begin{propdef}[\textup{\cite[Theorem 5.2]{Sc2}}]\label{Fdifferent}
Let $R$ be an $F$-finite normal local ring of characteristic $p>0$ and $D+B$ be an effective $\Q$-divisor on $X:=\Spec R$ such that $D$ is a normal prime divisor with defining ideal $Q \subseteq R$ and that $D$ is not contained in $\Supp B$.
Assume that there exists $e \in \N$ such that $(p^{e}-1)(K_X+D+B)$ is Cartier.

Let $\phi: F^e_*R \to R$ be the $R$-linear map corresponding to $D+B$.
Since the localized ring $R_Q$ is a DVR, $\phi(F^e_*Q) \subseteq Q$ (that is, $Q$ is an F-pure center of $(R, D+B)$. See Definition \ref{Fpurecenterdef} for the definition of F-pure centers).
Then we have the following commutative diagram:
$$
\xymatrix{
 F^{e}_*R   \ar[r]^{\phi} \ar[d] & R \ar[d] \\
 F^{e}_*(R/Q)  \ar[r]^{\phi_Q} & R/Q,
}
$$
where the vertical maps are the natural surjections.
We denote by $B_{R/Q}$ the effective $\Q$-divisor on $D$ corresponding to $\phi_Q$.
It is easy to check the following properties:
\begin{enumerate}
\item[(i)]
$(p^e-1)(K_D+B_{R/Q})$ is a Cartier divisor.
\item[(i')]
$\Hom_{R/Q}(F^e_*(R/Q)((p^e-1)B_{R/Q}), R/Q)$ is a  free $F^e_*(R/Q)$-module generated by $\phi_Q$.
\item[(ii)]
$(R, D+B)$ is sharply F-pure if and only if $(R/Q, B_{R/Q})$ is sharply F-pure.
\end{enumerate}
\end{propdef}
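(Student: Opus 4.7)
The plan is to derive all three assertions from the Schwede correspondence (recalled just before the statement) between effective $\Q$-divisors $\Gamma$ on a normal affine scheme $\Spec A$ with $(p^e-1)(K_A+\Gamma)$ Cartier and nonzero $A$-linear maps $F^e_*A \to A$ modulo multiplication by units of $A$.

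First I would verify that the induced map $\phi_Q : F^e_*(R/Q) \to R/Q$ is nonzero. Localizing the given commutative square at $Q$ reduces to the regular (DVR) case: since $D$ appears in the divisor associated to $\phi$ with coefficient exactly $1$, a uniformizer $\pi$ of $R_Q$ satisfies $\phi(F^e_*R_Q) = R_Q$ and $\phi(F^e_*\pi R_Q) \subseteq \pi R_Q$, with the induced map on the residue field $R_Q/\pi R_Q$ nonzero; this is precisely the coefficient-one compatibility built into Schwede's correspondence. Once $\phi_Q$ is known to be nonzero, applying the correspondence to the normal local ring $R/Q$ (which is normal because $D$ is) produces an effective $\Q$-divisor $B_{R/Q}$ on $D$ with $(p^e-1)(K_D + B_{R/Q})$ Cartier such that $\phi_Q$ freely generates $\Hom_{R/Q}(F^e_*(R/Q)((p^e-1)B_{R/Q}), R/Q)$ as an $F^e_*(R/Q)$-module. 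This proves (i) and (i').

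For (ii), I would use the characterization: $(R, D+B)$ is sharply F-pure if and only if some iterate $\phi^l : F^{le}_*R \to R$ has $1$ in its image, and similarly for $(R/Q, B_{R/Q})$ in terms of $(\phi_Q)^l$. This follows from the remark preceding the proposition: the iterate $\phi^l$ corresponds to the same divisor $D+B$, so $\Hom_R(F^{le}_*R((p^{le}-1)(D+B)), R)$ is a free $F^{le}_*R$-module generated by $\phi^l$, every element having the form $x \mapsto \phi^l(rx)$, and such a map sends $1$ to $1$ iff $1 \in \phi^l(F^{le}_*R)$. The key compatibility is the identity $(\phi^l)_Q = (\phi_Q)^l$, which follows by a straightforward induction from the commutativity of the original diagram. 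Granted this, the forward direction of (ii) is immediate, as surjectivity of $\phi^l$ descends to its quotient $(\phi_Q)^l$. For the converse, suppose $(\phi_Q)^l(\overline r) = 1$ for some $\overline r \in R/Q$; lifting to $r \in R$ yields $\phi^l(r) = 1 + q$ with $q \in Q \subseteq \m$, and $1+q$ is a unit in the local ring $R$, so $\phi^l$ is surjective.

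The only step with genuine content is the nonzeroness of $\phi_Q$, where the coefficient-one hypothesis on $D$ enters essentially (equivalently, that $Q$ is an F-pure center of $(R, D+B)$); the remainder is bookkeeping with the divisor/map correspondence and the local-ring Nakayama argument used above.
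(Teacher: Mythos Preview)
The paper does not provide its own proof of this proposition-definition; it is cited from \cite[Theorem~5.2]{Sc2} and the three properties are simply declared ``easy to check.'' Your proposal is therefore not competing with any argument in the paper, and your outline is a correct way to fill in the details. The derivation of (i) and (i') from the divisor--map correspondence, once $\phi_Q$ is known to be nonzero, is exactly right, and your DVR computation showing $\phi_Q \neq 0$ (using that the coefficient of $D$ in $D+B$ is precisely $1$) is the intended mechanism.

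One small point in your argument for (ii) deserves a remark. Your characterization ``$(R,D+B)$ is sharply $F$-pure if and only if some iterate $\phi^l$ has $1$ in its image'' is correct, but the definition of sharp $F$-purity in the paper allows a witness $\varphi$ at an arbitrary level $e'$, not only at multiples of the fixed $e$. The bridge you need is that if $\varphi(1)=1$ at level $e'$, then $\varphi^e(1)=1$ at level $ee'$, and since $p^{ee'}-1$ is divisible by $p^e-1$, the divisor $(p^{ee'}-1)(K_X+D+B)$ is Cartier and $\phi^{e'}$ freely generates the relevant $\Hom$ at level $ee'$; hence $\varphi^e = \phi^{e'}(r\,\cdot\,)$ for some $r$, giving $\phi^{e'}(r)=1$. (Alternatively, one can invoke the factorization result \cite[Corollary~3.10]{Sc2} used in the proof of Lemma~\ref{QCartiernonFpureideal}.) With this clarification your proof of (ii), including the Nakayama-type argument for the converse direction, is complete.
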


\begin{rem}
The divisor $B_{R/Q}$ defined above is canonically determined and exists even outside the local setting.  Explicitly, if $X$ is an $F$-finite normal irreducible scheme and $D$ and $B$ are as above, then there exists a divisor $B_{D}$ on $D$ (replacing $B_{R/Q}$) satisfying the properties (i), (i') and (ii) above locally and also satisfying the condition that $(K_X + D + B)|_{D} \sim_{\Q} B_{D}$.  See \cite[Remark 9.5]{Sc2} for details.
\end{rem}

\begin{conj}[\textup{cf.~\cite[Remark 7.6]{Sc2}}]\label{Fdifferent vs. different}
Let the notation be the same as in Definition \ref{Fdifferent}.
Then $B_{R/Q}$ coincides with the different $B_D$ of $(X, D+B)$ on $D$ $($see Section \ref{sec-diff} for the definition of differents$)$.
\end{conj}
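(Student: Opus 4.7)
The plan is to reduce to a 2-dimensional local situation and then compare both differents on a common log resolution. First, to show the equality of $\Q$-Weil divisors $B_{R/Q}=B_D$ on $D$, it suffices to check coefficients at each prime divisor $E\subset D$. Both constructions commute with localization at the codimension-two point $\eta_E\in X$: the classical different by its log-resolution definition, and the F-different because $\Hom_R(F^e_*R,R)$ localizes well when $R$ is $F$-finite, and the construction of $\phi_Q$ is manifestly local. So I would reduce to the case that $R$ is a 2-dimensional $F$-finite normal local ring, $D=V(Q)$ is regular (a DVR), and $B$ is an effective $\Q$-divisor with no component in common with $D$. The conjecture then becomes: the coefficient of the closed point of $D$ is the same in both differents.

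Next, I would fix a common log resolution $f\colon Y\to X$ of $(X, D, B)$ with strict transform $D_Y$ smooth and $D_Y \cup \Exc(f) \cup f^{-1}_*B$ simple normal crossing. Writing $K_Y + D_Y + B_Y = f^*(K_X + D + B)$, by definition $B_D$ is the pushforward of $B_Y|_{D_Y}$ along $D_Y \to D$. Since $D$ is smooth one-dimensional and $D_Y \to D$ is proper birational between smooth curves, it is an isomorphism, and $B_D = B_Y|_{D_Y}$ under this identification. The coefficient of the closed point of $D$ in $B_D$ is then determined by the multiplicities of $B_Y$ at the exceptional components $E_i$ meeting $D_Y$ and the intersection numbers $E_i\cdot D_Y$.

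For the F-different, I would transport the map $\phi \colon F^e_*R \to R$ along $f$. Using Grothendieck duality for the proper birational morphism $f$ together with $f_*\omega_Y = \omega_X$, the map $\phi$ corresponds to an $F^e_*\cO_Y$-linear map $\phi_Y\colon F^e_*\cO_Y \to \cO_Y$, whose associated $\Q$-divisor on the regular scheme $Y$ is precisely $D_Y + B_Y$ (by the transparent map-divisor correspondence on a regular SNC setup). Restricting $\phi_Y$ to $D_Y$ in the sense of Proposition-Definition \ref{Fdifferent} yields a map whose associated divisor on $D_Y$ is $B_Y|_{D_Y}$. Pushing this restriction forward along $f|_{D_Y}\colon D_Y \cong D$ should recover $\phi_Q$ by the naturality of the Frobenius trace under proper pushforward, yielding $B_{R/Q} = B_Y|_{D_Y} = B_D$.

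The hard part will be the last step: matching the operations ``restrict-then-pushforward'' and ``pushforward-then-restrict'' in the Frobenius-twisted setting on the nose, not merely up to $\Q$-linear equivalence. Classical adjunction absorbs $K_{Y/X}$ into the discrepancies defining $B_Y$, whereas the map-theoretic formulation absorbs it through the choice of generator of $\omega_{Y/X}$; the crux is verifying that these two encodings agree modulo units rather than only modulo principal divisors. This requires careful bookkeeping with Grothendieck duality in the relative setting and its compatibility with the Frobenius, which is ultimately the source of the subtlety that prevents the desired equality from following formally from the weaker statement $K_D + B_{R/Q}\sim_{\Q}(K_X+D+B)|_D$.
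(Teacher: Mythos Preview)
The statement you are attempting to prove is labeled in the paper as a \emph{Conjecture}, and the paper offers no proof. The only comment the authors make is the remark immediately following it: the conjecture is known when $D$ is Cartier in codimension two, with a reference to \cite[Section~7]{Sc2}. There is therefore no proof in the paper to compare your proposal against.

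Your reduction to a $2$-dimensional local ring is correct, but note that it does \emph{not} place you in the known case. After localizing at the generic point $\eta_E$ of a prime divisor $E\subset D$, you obtain a $2$-dimensional $F$-finite normal local ring in which $D$ need not be Cartier; it is Cartier there precisely when the original $D$ was Cartier in codimension two, which is exactly the hypothesis under which the result is already established. The substance of the conjecture lives entirely in the singular $2$-dimensional case.

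Your outline has a genuine gap earlier than where you locate it. The assertion that $\phi$ transports along $f$ to a map $\phi_Y\colon F^e_*\cO_Y\to\cO_Y$ whose associated divisor is $D_Y+B_Y$ is not justified: in general $B_Y$ is not effective (discrepancies can be positive), so no element of $\Hom_{\cO_Y}(F^e_*\cO_Y,\cO_Y)$ corresponds to $D_Y+B_Y$ under the standard bijection. Furthermore, the equality $f_*\omega_Y=\omega_X$ you invoke can fail in characteristic $p$ (there is no Grauert--Riemenschneider vanishing), so the duality argument you sketch for producing $\phi_Y$ from $\phi$ does not go through. You are right that the heart of the matter is matching ``restrict-then-pushforward'' with ``pushforward-then-restrict'' up to units rather than up to $\Q$-linear equivalence, but this is the open content of the conjecture itself, not a bookkeeping detail awaiting verification.
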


\begin{rem}
Conjecture \ref{Fdifferent vs. different} holds true if $D$ is Cartier in codimension two.
The reader is referred to \cite[Section 7]{Sc2} for details.
\end{rem}

Now we state our restriction theorem for non-F-pure ideals.
\begin{thm} \label{restrictionTheoremForNonFPure}
Let $R$ be an $F$-finite normal domain of characteristic $p>0$ and $D+B$ be an effective $\Q$-divisor on $X:=\Spec R$ such that $D$ is a normal prime divisor with defining ideal $Q \subseteq R$ and that $D$ is not contained in $\Supp B$.
Assume that $K_X+D+B$ is $\Q$-Cartier with index not divisible by $p$.
Then
$$\sigma(R,D+B)|_{D}=\sigma(R/Q, B_{R/Q}).$$
\end{thm}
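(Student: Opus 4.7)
The plan is to reduce the restriction theorem to a diagram chase using Lemma~\ref{QCartiernonFpureideal} together with Definition-Proposition~\ref{Fdifferent}. Since the formation of $\sigma$ commutes with localization by Proposition~\ref{basicnonFpureideal}(3), I may localize and assume that $R$ is local. The assumption that $K_X+D+B$ is $\Q$-Cartier with index prime to $p$ yields an integer $e_0$ such that $(p^{e_0}-1)(K_X+D+B)$ is Cartier; I fix the generator $\phi := \phi_{e_0}$ of $\Hom_R(F^{e_0}_* R((p^{e_0}-1)(D+B)), R)$ and denote by $\phi_Q : F^{e_0}_*(R/Q) \to R/Q$ its restriction as constructed in Definition-Proposition~\ref{Fdifferent}. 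By properties (i) and (i') there, $\phi_Q$ generates $\Hom_{R/Q}(F^{e_0}_*(R/Q)((p^{e_0}-1)B_{R/Q}), R/Q)$ and $(p^{e_0}-1)(K_D+B_{R/Q})$ is Cartier.

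Applying Lemma~\ref{QCartiernonFpureideal} to each of $(R, D+B)$ and $(R/Q, B_{R/Q})$ gives, for sufficiently large $l$,
$$
\sigma(R, D+B) = \phi^l(F^{le_0}_* R) \quad \text{and} \quad \sigma(R/Q, B_{R/Q}) = (\phi_Q)^l(F^{le_0}_*(R/Q)).
$$

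The central technical step I have to verify is the compatibility identity $(\phi^l)_Q = (\phi_Q)^l$, i.e., that iteration commutes with the restriction construction of Definition-Proposition~\ref{Fdifferent}. This should follow from a straightforward induction on $l$: writing $\phi^l = \phi \circ F^{e_0}_* \phi^{l-1}$, the remark after Definition-Proposition~\ref{Fdifferent} ensures that $\phi^{l-1}$ also corresponds to $D+B$, so it too carries $F^{(l-1)e_0}_* Q$ into $Q$; then unwinding residues mod $Q$ and applying the inductive hypothesis yields the identity on the nose.

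Granting this, the theorem follows by chasing the commutative square of $R$-linear maps with horizontal arrows $\phi^l : F^{le_0}_* R \to R$ and $(\phi_Q)^l : F^{le_0}_*(R/Q) \to R/Q$, and vertical arrows the canonical surjections $F^{le_0}_* R \twoheadrightarrow F^{le_0}_*(R/Q)$ and $R \twoheadrightarrow R/Q$. Surjectivity of the left vertical arrow forces the image of $\phi^l(F^{le_0}_* R)$ in $R/Q$ to equal $(\phi_Q)^l(F^{le_0}_*(R/Q))$, and so
$$
\sigma(R, D+B)|_D = \bigl(\phi^l(F^{le_0}_* R) + Q\bigr)/Q = (\phi_Q)^l(F^{le_0}_*(R/Q)) = \sigma(R/Q, B_{R/Q}).
$$
The main obstacle is just the bookkeeping for $(\phi^l)_Q = (\phi_Q)^l$; the $\Q$-Cartier hypothesis with index prime to $p$ is essential precisely because it allows us to replace the sum-over-$e$ definition of $\sigma$ with the image of a single iterate $\phi^l$, which restricts cleanly modulo $Q$.
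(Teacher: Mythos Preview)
Your proposal is correct and follows essentially the same approach as the paper: localize, invoke Lemma~\ref{QCartiernonFpureideal} on both $(R,D+B)$ and $(R/Q,B_{R/Q})$, and then read off the equality from the commutative square of Proposition--Definition~\ref{Fdifferent} iterated $l$ times. The paper simply asserts the commutative diagram for each $l$, whereas you spell out the inductive verification of $(\phi^l)_Q=(\phi_Q)^l$; this is the only difference, and it is purely expository.
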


\begin{proof}
The statement is local, so we may assume without loss of generality that $R$ is also local.
Since $K_X+D+B$ is $\Q$-Cartier with index not divisible by $p$, there exist infinitely many $e \in \N$ such that $(p^e-1)(K_X+D+B)$ is a Cartier divisor.
We fix one of such $e$.
Let $\phi \in \Hom_R(F^{e}_*R, R)$ be the $R$-linear map corresponding to $D+B$.
By the definition of $B_{R/Q}$, there exists $\phi_Q \in \Hom_{R/Q}(F^{e}_*(R/Q), R/Q)$ corresponding to $B_{R/Q}$
such that we have the following commutative diagram for each $l \in \N$ :
$$
\xymatrix{
F^{le}_*R   \ar[r]^{\phi^l} \ar[d] & R \ar[d] \\
F^{le}_*(R/Q)  \ar[r]^{\phi_Q^l} & R/Q,
}
$$
where the vertical maps are natural quotient maps.
Thus, it follows from Lemma \ref{QCartiernonFpureideal} that for a sufficiently large $l$,
$$\sigma(R, D+B)|_D=\phi^l(F^{le}_*R)R/Q=\phi_Q^l(F^{le}_*(R/Q))=\sigma(R/Q, B_{R/Q}).$$
\end{proof}

In fact, the previous restriction even holds when restricting to an $F$-pure center of arbitrary codimension.

\begin{defn}[\cite{Sc3}]\label{Fpurecenterdef}
Suppose that $(X, \Delta)$ is a pair such that $K_X + \Delta$ is $\Q$-Cartier with index not divisible by $p$.  We say that a subvariety $W \subseteq X$ is a \emph{center of sharp $F$-purity for $(X, \Delta)$} if, after localizing at each point $x \in X$, any (equivalently, some) map $\phi : F^e_* \cO_{X, x} \to \cO_{X, x}$ corresponding to $\Delta$ (as at the start of this section) satisfies the property that
\[
\phi( F^e_* \I_{W, x}) \subseteq \I_{W, x}.
\]
Here $\I_{W}$ is the ideal sheaf defining $W$ and $\I_{W, x}$ is its stalk at $x \in X$.
 We simply call it an F-pure center of $(R, \Delta)$ if the context is clear.
\end{defn}

Given a pair $(X := \Spec R, \Delta)$ and a normal $F$-pure center $W \subseteq X$ with defining ideal $Q \subseteq R$ such that $(X, \Delta)$ is sharply $F$-pure at $Q$, then there exists a canonically determined $\Q$-divisor $\Delta_{R/Q}$ on $W$ satisfying the properties (i), (i') and (ii) from Proposition-Definition \ref{Fdifferent}.  The proof (and reference) are the same.

\begin{thm}
Let $R$ be an $F$-finite normal ring of characteristic $p>0$ and $\Delta$ be an effective $\Q$-divisor on $X:=\Spec R$ such that $K_X + \Delta$ is $\Q$-Cartier with index not divisible by $p$.  Suppose that $W \subseteq X$ is an $F$-pure center of $(X, \Delta)$ and also that $(X, \Delta)$ is sharply $F$-pure at the generic point of $W$.  Let us use $Q$ to denote the ideal of $W$.
Then
$$\sigma(R,\Delta)|_{W}=\sigma(R/Q, \Delta_{R/Q}).$$
\end{thm}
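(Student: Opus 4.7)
The plan is to mimic the proof of Theorem \ref{restrictionTheoremForNonFPure} essentially verbatim, with the extension of Proposition-Definition \ref{Fdifferent} to arbitrary codimension $F$-pure centers (asserted in the paragraph preceding the theorem) doing all of the new work. First, I would reduce to the local case, so that $R$ is an $F$-finite normal local ring. Since $K_X+\Delta$ is $\Q$-Cartier with index not divisible by $p$, there exist infinitely many $e\in\N$ with $(p^{e}-1)(K_X+\Delta)$ Cartier; fix one such $e_0$ and let $\phi\in\Hom_R(F^{e_0}_*R,R)$ be the map corresponding to $\Delta$. Because $W$ is an $F$-pure center, Definition \ref{Fpurecenterdef} gives $\phi(F^{e_0}_*Q)\subseteq Q$, so $\phi$ descends to $\phi_Q\in\Hom_{R/Q}(F^{e_0}_*(R/Q),R/Q)$.

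Next, I invoke the generalization of Proposition-Definition \ref{Fdifferent} to arbitrary codimension $F$-pure centers: the induced map $\phi_Q$ corresponds to precisely the divisor $\Delta_{R/Q}$, and this correspondence preserves properties (i) and (i'), namely $(p^{e_0}-1)(K_W+\Delta_{R/Q})$ is Cartier and $\Hom_{R/Q}(F^{e_0}_*(R/Q)((p^{e_0}-1)\Delta_{R/Q}),R/Q)$ is a free $F^{e_0}_*(R/Q)$-module generated by $\phi_Q$. The assumption that $(X,\Delta)$ is sharply $F$-pure at the generic point of $W$ guarantees that $\phi_Q$ is nonzero, so that the hypotheses of Lemma \ref{QCartiernonFpureideal} are satisfied for the pair $(R/Q,\Delta_{R/Q})$. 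Iterating, we obtain a commutative diagram
\[
\xymatrix{
F^{le_0}_*R \ar[r]^{\phi^l} \ar[d] & R \ar[d] \\
F^{le_0}_*(R/Q) \ar[r]^{\phi_Q^l} & R/Q
}
\]
for every $l\in\N$, in which the vertical maps are the natural surjections.

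Finally, applying Lemma \ref{QCartiernonFpureideal} separately to $(R,\Delta)$ and to $(R/Q,\Delta_{R/Q})$, for all sufficiently large $l$ we obtain $\sigma(R,\Delta)=\phi^l(F^{le_0}_*R)$ and $\sigma(R/Q,\Delta_{R/Q})=\phi_Q^l(F^{le_0}_*(R/Q))$. Because $F^{le_0}_*R\twoheadrightarrow F^{le_0}_*(R/Q)$ is surjective and the diagram commutes, reducing modulo $Q$ yields
\[
\sigma(R,\Delta)|_W \;=\; (\phi^l(F^{le_0}_*R)+Q)/Q \;=\; \phi_Q^l(F^{le_0}_*(R/Q)) \;=\; \sigma(R/Q,\Delta_{R/Q}),
\]
which is the asserted equality.

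The principal obstacle I anticipate is not in the argument just sketched but in the input to it: verifying that the analog of Proposition-Definition \ref{Fdifferent} truly furnishes the divisor $\Delta_{R/Q}$ with properties (i) and (i') when $W$ has codimension greater than one. In the divisor case one uses Grothendieck duality together with the fact that a normal prime divisor is Gorenstein in codimension one, which makes the canonical module behave well under restriction; in higher codimension this requires more care, and the hypotheses that $W$ be an $F$-pure center in the sense of Definition \ref{Fpurecenterdef} and that sharp $F$-purity hold at the generic point of $W$ (so that $\phi_Q\ne 0$) are precisely what make the higher-codimension analog of Proposition-Definition \ref{Fdifferent} go through. Once the extended Proposition-Definition is established, the remainder of the argument is formally identical to the proof of Theorem \ref{restrictionTheoremForNonFPure}.
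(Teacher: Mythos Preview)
Your proposal is correct and follows essentially the same approach as the paper. The paper's own proof consists of a single sentence, ``The proof is the same as in Theorem \ref{restrictionTheoremForNonFPure},'' together with the remark that sharp $F$-purity at the generic point of $W$ is needed to define $\Delta_{R/Q}$; you have simply unpacked this in detail, correctly identifying that the role of the sharp $F$-purity hypothesis is to ensure $\phi_Q\ne 0$ so that the divisor $\Delta_{R/Q}$ exists.
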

\begin{proof}
The proof is the same as in Theorem \ref{restrictionTheoremForNonFPure}.
The assumption that $(X, \Delta)$ is sharply F-pure at the generic point of $W$ is needed to define the $\Q$-divisor $\Delta_{R/Q}$. 
\end{proof}

Compare the following example with Example \ref{restrictionex}.
\begin{ex}
Let $R=k[x,y]$ be the two-dimensional polynomial ring over an $F$-finite field $k$. Set $D=\Div(x)$ and $B=\Div(x^3-y^2)$. It then follows from Remark \ref{computationFpure} that
\begin{align*}
\sigma(R, D+B)&=(x^2, y),\\
\sigma(R/(x), B|_D)&=(y).
\end{align*}
Hence the restriction theorem holds in this case.
\end{ex}

\ifx\undefined\bysame
\newcommand{\bysame|{leavemode\hbox to3em{\hrulefill}\,}
\fi

\end{document}